\newcounter{tempcolnum}
\newcommand{\multicolinterrupt}[1]{% Stuff to span both rows
\setcounter{tempcolnum}{\col@number}
\end{multicols}
#1%
\begin{multicols}{\value{tempcolnum}}
}
\theoremstyle{definition}
\newtheorem{definition}{Definition}[section]
\theoremstyle{conjecture}
\theoremstyle{theorem}
\newtheorem{theorem}{Theorem}[section]
\theoremstyle{proposition}
\newtheorem{proposition}{Proposition}[section]
\theoremstyle{lemma}
\theoremstyle{corollary}
\newtheorem{corollary}{Corollary}[section]
\theoremstyle{remark}
\newtheorem{remark}{Remark}[section]
\definecolor{RED}{rgb}{1,0,0}\definecolor{BLUE}{rgb}{0,0,1} %DIF PREAMBLE
\newcommand{\HorRule}{\color{DarkGoldenrod} \rule{\linewidth}{1pt}} % Defines the gold horizontal rule around the title
\title{\LARGE Vassiliev measures of complexity for open and closed curves in 3-space}
\author{Eleni Panagiotou$^{\#,*}$ and Louis H. Kauffman$^{\S}$ \\} % Your name
\definecolor{issuePJA_color}{rgb}{1.0,0.0,0.0}
\definecolor{commentPJA_color}{rgb}{1.0,0.0,0.8}
\definecolor{commentEP_color}{rgb}{1.0,0.0,0.8}
\begin{document}

\maketitle % Print the title

\thispagestyle{fancy} % Enabling the custom headers/footers for the first page

\textbf{In this manuscript we define Vassiliev measures of complexity for open curves in 3-space. These are related to the coefficients of the enhanced Jones polynomial of open curves in 3-space. These Vassiliev measures are continuous functions of the curve coordinates and as the ends of the curve tend to coincide, they converge to the corresponding Vassiliev invariants of the resulting knot. We focus on the second Vassiliev measure from the enhanced Jones polynomial for closed and open curves in 3-space. For closed curves, this second Vassiliev measure can be computed by a Gauss code diagram and it has an integral formulation, the double alternating self-linking integral. The double alternating self-linking integral is a topological invariant for closed curves and a continuous function of the curve coordinates for open curves in 3-space. For polygonal curves, the double alternating self-linking integral obtains a simpler expression in terms of geometric probabilities. For a polygonal curve with 4 edges, the double alternating self-linking integral coincides with the signed geometric probability of obtaining the knotoid k2.1 in a random projection direction.}

\noindent\textit{keywords} open knots, links, Vassiliev invariants, Gauss map

\section{Introduction}

%In \cite{Witten1989} a formulation of a class of 3-manifold invariants as generalized Feynman integrals was introduced. The formalism and internal logic of Witten's integral supports the existence of a large class of topological invariants of $3$-manifolds and associated invariants of knots and links in these manifolds, called Vassiliev invariants. Several interpretations of these invariants exist, as well as conjectures related to the Witten integral, in particular concerning its existence. 
%In this manuscript, we focus on the derivation of a geometrical interpretation of Vassiliev invariants which allows their application to open curves in 3-space.

% taking the form $Z(M)$ where 

%\begin{equation}
%    Z(M)=\int DAe^{(ik/4\pi)S(M,A)}.
%\end{equation}

%Here $M$ denotes a 3-manifold without boundary and $A$ is a gauge field defined on $M$. The gauge field is a one-form on a trivial $G$-bundle over $M$ with values in a representation of the Lie algebra of $G$. In this integral the action $S(M,A)$ is taken to be the integral oevr $M$ of the trace of the Chern-Simons three-form $A\wedge dA+(2/3)A\wedge A\wedge A\wedge A$. $Z(M)$ integrates over all gauge fields modulo gauge equivalence. 

Many physical systems are composed of entangled filamentous structures whose complexity affects their mechanical properties and their function \cite{Rubinstein2003,Everaers1996,Edwards1967,Edwards1968,Halverson2014,Grosberg1998,Grosberg1994}. Under some conditions, we can see these filamentous structures like mathematical curves in 3-space whose entanglement we can measure using tools from Knot Theory \cite{Kauffman2001,Grosberg1996,Grosberg2000,Flapan2019,Stolz2017,Sumners1988,Sumners1990,Arsuaga2005,Arsuaga2007,Marenduzzo2009,Micheletti2006,Diao1993,Millett2004}. A knot (link) is one (or more) simple closed curve(s) in 3-space and knots (links) are classified using the notion of topological equivalence. Many sophisticated topological invariants exist, such as knot and link polynomials \cite{Jones1985,Jones1987,Kauffman1987,Kauffman1990,Freyd1985}. However, there are two major throwbacks in measuring entanglement complexity of filamentous structures in practice: these may be open curves in 3-space (ie. they have a distinct starting and endpoint) and entanglement in these systems is very complex, at least in terms of number of crossings in diagrams, making the calculation of such polynomials intractable. 

The only measure of entanglement of open curves in 3-space until recently was the Gauss linking integral \cite{Gauss1877}. It measures self or pairwise entanglement of open curves and has had a lot of success across disciplines \cite{Panagiotou2010,Panagiotou2011,Panagiotou2013,Panagiotou2013b,Panagiotou2014,Panagiotou2015,Panagiotou2019,Klenin2000,Arsuaga2007a,Diao2005,Panagiotou2020,Baiesi2016,Baiesi2017,Baiesi2019}. Characterizing entanglement of open curves in 3-space using stronger measures of entanglement that can detect knotting of open curves has attracted a lot of attention in the last 20 years (see \cite{Millett2005,Sulkowska2012,Goundaroulis2017,Goundaroulis2017b,Rawdon2008} and references therein). However, all these approaches focused on approximating the open curve in 3-space by a knot (a closed curve) or by a knotoid (a 2-dimensional diagram). In 2020, in \cite{Panagiotou2020b} the Jones polynomial of open curves in 3-space was introduced and it was shown that it is a polynomial with real coefficients that are continuous functions of the curve coordinates. Therein it was shown that the Jones polynomial of open curves in 3-space generalizes the conventional Jones polynomial. In other words, the conventional Jones polynomial expression is a special case of the Jones polynomial introduced in \cite{Panagiotou2020b}. %The Jones polynomial has already been used successfully in applications to polymers in \cite{Herschberg2021}.

The approach introduced in \cite{Panagiotou2020b} provided a framework which we can use to study entanglement of both open and closed curves. This paper focuses in deriving Vassiliev invariant type measures of entanglement of both open and closed curves. In addition, an integral formula of the second Vassiliev invariant measure is introduced that provides a way to compute such entanglement measures directly from the coordinates of an open curve in 3-space.

Vassiliev invariants are related to the coefficients of the Jones polynomial and can distinguish knots and links as the polynomials do \cite{Vassiliev2005,Vassiliev1990,BarNatan1995,Goussarov2000,Polyak2001}. Combinatorial expressions for calculating some Vassiliev invariants from knot diagrams exist \cite{Vassiliev2005,Goussarov2000,Polyak2001} and integral expressions for Vassiliev invariants also exist, however their calculation remains elusive \cite{Hirshfeld1995,Thurston1995}. A major issue with computing Vassiliev invariants in practice is that physical filaments are usually composed by open curves in 3-space. The theory of knotoids provides a way to study complexity of open knot \textit{diagrams}, for which Vassiliev invariants are rigorously defined \cite{Turaev2012,Gugumcu2017,Gugumcu2017b,Manouras2020}. However, these are not well defined for open curves in 3-space. 

%Calculating the Jones polynomial in practice can be very time consuming, especially for physical curves whose diagrams involve many crossings. Having a geometric expression for Vassiliev invariants could provide a new practical way to measure complexity in physical systems.  Combinatorial expressions for calculating Vassiliev invariants from knot diagrams have been studied \cite{Vassiliev2005,Goussarov2000,Polyak2001}. Integral expressions for Vassiliev invariants also exist, however their calculation remains elusive \cite{Hirshfeld1995,Thurston1995}. 

%Many efforts have appeared in the last years that attempt to measure the complexity of open curves in 3-space using approximation methods \cite{Sulkowska2012,Goundaroulis2017,Goundaroulis2017b}. A rigorous definition of the Jones polynomial for open curves in 3-space that required no approximation was discovered only recently in \cite{Panagiotou2020b}. Therein, it was proved that the coefficients of the Jones polynomials of open curves in 3-space are continuous functions of the curve coordinates in 3-space. 

%The approach introduced therein builds on all the previous literature but introduces a breakthrough approach that consists in a geometric average of polynomial values. That idea opens the door to many more definitions of measures of entanglement complexity of open curves in 3-space.

%combines ideas from the Gauss linking integral and knots/knotoids. This allows to define more measures of complexity for open curves in 3-space.

In this manuscript, we define Vassiliev measures for open curves in 3-space using the coefficients of the Jones polynomial with enhanced states of the open curves in 3-space. We show that they are continuous functions of the curve coordinates. An integral formula for the second Vassiliev invariant from the enhanced Jones polynomial is introduced, which involves a Gauss map, the double alternating self-linking integral. For polygonal curves in 3-space, the double alternating self-linking integral is expressed as a finite sum of geometric probabilities. For open curves, the double alternating self-linking integral is a continuous function of the curve coordinates. The double alternating self-linking integral that we introduce provides a unique - to our knowledge - measure of conformational complexity of open curves in 3-space that is stronger than the Gauss self-linking integral and does not require the computation of any knot polynomial. This can be extremely helpful in practice when studying entanglement in physical systems and we are aware of the potential breakthrough this could have in the study of proteins and polymers. We point out that the method introduced here can be applied to other Vassiliev invariants as well. This generates well defined integrals over closed or open curves in 3-space that capture higher degrees of entanglement.  
%Finally we conjecture that this measure is related up to a normalization to the Witten integral expansion of the second Vassiliev measure for open curves in 3-space. 

% We give a geometric interpretation of the Casson invariant which is equal to the second Vassiliev invariant computed by the Jones polynomial, which bridges the Witten integral perturbative expansion definition as well.

%Finally, we prove that these Vassiliev measures for open curves (no longer invariants) are related to the coefficient of the Jones polynomial of open curves. This unifies two approaches of defining measures of entanglement of open curves, one using Gauss maps and the other by using knotoids. 

More precisely, in Section \ref{Vassiliev} we derive the exact formulas of the Vassiliev invariants for knots and knotoids obtained from the enhanced Jones polynomial. In Section \ref{Vasopen} we introduce the Vassiliev measures for open curves in 3-space and study their properties. In Section \ref{knots2} we focus on the second Vassiliev invariant of the enhanced Jones polynomial and show that it can be computed using a Gauss code. In Section \ref{knotoids2} a skein relation satisfied by the second Vassiliev invariant of the enhanced Jones polynomial for knotoids is derived and, in the case of knot-type knotoids, it is shown that this second Vassiliev invariant can be calculated by using a Gauss code diagram. In Section \ref{SLL} we introduce the double alternating self-linking integral and we show that it is equal (up to some specified constants) to the second Vassiliev invariant of the enhanced Jones polynomial in case of closed curves. In case of open curves, it is a continuous function of the curve coordinates. Finally, in Section \ref{VasPol} we show that for polygonal curves, the double alternating self-linking integral has a simpler expression as a finite sum of geometric probabilities and we give a finite form for the computation of those for polygonal curves with 3 or 4 edges in 3-space.

\section{Vassiliev invariants of knots and knotoids}\label{Vassiliev}

In this section we present the definitions of Vassiliev invariants of knots and knotoids defined through the coefficients of the Jones polynomial of knots and knotoids, repsectively.

%The coefficients of $(1/k)^m$ in the perturbative expansion of the Witten integral are called Vassiliev invariants of order $m$:

\subsection{Vassiliev invariants of knots}

The Jones polynomial is defined using the normalized bracket polynomial. An expression of the Jones polynomial, which is helpful for deriving Vassiliev invariants, relies on using enhanced states. 

The bracket polynomial can be computed using the following relation:

\begin{equation}\label{skein}
\langle\raisebox{-10pt}{\includegraphics[width=.05\linewidth]{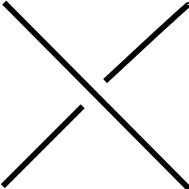}}\rangle=\langle\raisebox{-10pt}{\includegraphics[width=.05\linewidth]{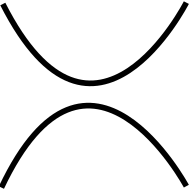}}\rangle-q\langle\raisebox{-10pt}{\includegraphics[width=.05\linewidth]{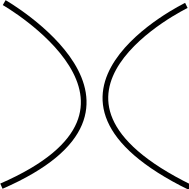}}\rangle,\hspace{0.5cm}\langle \bigcirc\rangle=q+q^{-1}.
\end{equation}

The smoothings with coefficient $1$ in the above equation are called $A$ smoothings, and the smoothings with coefficient $q$ are called $B$ smoothings. Through $A$ and $B$ smoothings of the crossings in a diagram, we obtain a set of enhanced states. In enhanced states, circles have an associated sign in their diagram, and a circle with no sign is a sum of two such states. A circle with a positive sign corresponds to $q$ and a circle with negative sign corresponds to $q^{-1}$.

%$-A^2$ and a circle with negative sign corresponds to $-A^{-2}$. Then the enhanced bracket polynomial can be expressed as

%\begin{equation}
%    \begin{split}
%        \langle K\rangle=\sum_{s\in S}A^{i(s)+2\lambda(s)}(-1)^{\lambda(s)}
%    \end{split}
%\end{equation}

%\noindent where $s$ is an enhanced state, $i(s)=$number of A-smoothings $-$ number of B-smoothings, and $\lambda(s)=$number of positive circles $-$ number of negative circles.

%The bracket polynomial can be computed using the following relation:

%\begin{equation}\label{skein}
%\langle\raisebox{-10pt}{\includegraphics[width=.05\linewidth]{cross1.png}}\rangle=\langle\raisebox{-10pt}{\includegraphics[width=.05\linewidth]{cross3.png}}\rangle-q\langle\raisebox{-10pt}{\includegraphics[width=.05\linewidth]{cross2.png}}\rangle,\hspace{0.5cm}\langle \bigcirc\rangle=q+q^{-1}
%\end{equation}

For $s$ an enhanced state, let $i(s)$ be the number of B-smoothings and $\lambda(s)=$number of positive circles - number of negative circles. Then each state corresponds to a term of the form $(-q)^{i(s)}q^{\lambda(s)}=(-1)^{i(s)}q^{j(s)}$, where $j(s)=i(s)+\lambda(s)$, and the bracket polynomial can be expressed as

\begin{equation}
        \langle K\rangle=\sum_{s\in S}(-1)^{i(s)}q^{j(s)}.
\end{equation}

\noindent We define the enhanced Jones polynomial in $q$ as

\begin{equation}
        J_K(q)=q^{n_{+}-2n_{-}}(-1)^{n_-}\langle K\rangle,
\end{equation}

\noindent where $n_{+}$ is the number of positive crossings and $n_{-}$ is the number of negative crossings. The enhanced Jones polynomial, $J_K$, is related to the Jones polynomial, $V_K$, through the relation:

\begin{equation}\label{Jenhanced}
J_K(q)=(q+q^{-1})V_K(q),
\end{equation} 

\noindent with the substitution $q=t^{-1/2}$. 

The enhanced Jones polynomial satisfies the following relation:

\begin{equation}\label{Jskein}
q^{-2}J_{K_+}(q)-q^2J_{K_-}(q)=(q^{-1}-q)J_{K_0}.
\end{equation}

%The enhanced Jones polynomial is related to the original Jones polynomial through the relation:

%\begin{equation}\label{Jenhanced}
%J_K(q)=(q+q^{-1})V_K(q)
%\end{equation}
%\noindent Then the Jones polynomial in $q$ is defined as 

%\begin{equation}
%        V_K(q)=(q+q^{-1})J_K(q)=(q+q^{-1})q^{n_{-}-2n_{+}}(-1)^{n_-}\langle K\rangle
%\end{equation}

%\noindent where $n_{+}$ is the number of positive crossings and $n_{-}$ is the number of negative crossings. We obtain the Jones polynomial in $t$ by setting $q=t^{1/2}$.

We use the substitution $t=e^x$, ie. $q=e^{-x/2}$, in $J_K(q)$ and then expand it in series of $x$.

%\begin{equation}
%    \begin{split}
%        &J_K(q)=q^{n_{-}-2n_{+}}\sum_{s\in S}(-1)^{i(s)}q^{j(s)}\\
%        &=\sum_{s\in S}(-1)^{i(s)}q^{j(s)+n_{-}-2n_{+}}\\
%        &=\sum_{s\in S}(-1)^{i(s)}e^{(j(s)+n_{-}-2n_{+})x}\\
%        &=\sum_{s\in S}(-1)^{i(s)}\sum_{k=0}^{\infty}\frac{1}{k!}((j(s)+n_{-}-2n_{+})x)^k\\
%        &=\sum_{k=0}^{\infty}\frac{x^k}{k!}\sum_{s\in S}(-1)^{i(s)}(j(s)+n_{-}-2n_{+})^k\\
%    \end{split}
%\end{equation}

\begin{equation}
    \begin{split}
        &J_K=\sum_{s\in S}(-1)^{i(s)+n_{-}}q^{j(s)+n_{+}-2n_{-}}\\
        &=\sum_{s\in S}(-1)^{i(s)+n_{-}}e^{-(j(s)+n_{+}-2n_{-})x/2}\\
        &=\sum_{s\in S}(-1)^{i(s)+n_{-}}\sum_{k=0}^{\infty}\frac{1}{k!2^k}((j(s)+n_{+}-2n_{-})x)^k\\
        &=\sum_{k=0}^{\infty}\frac{(-1)^kx^k}{2^kk!}\sum_{s\in S}(-1)^{i(s)+n_{-}}((j(s)+n_{+}-2n_{-})^k\\
    \end{split}
\end{equation}

The coefficients of $x^k$ are Vassiliev invariants of order $k$.

\begin{equation}\label{vk}
    v_k=\frac{(-1)^k}{2^kk!}\sum_{s\in S}(-1)^{i(s)+n_{-}}(j(s)+n_{+}-2n_{-})^k,
\end{equation}

\noindent where $S$ is the set of enhanced states of the knot diagram, $i(s)$ is the number of B-smoothings, $j(s)=i(s)+\lambda(s)$, $\lambda(s)$ is the algebraic number of circles and $n_{-}$ (resp. $n_{+}$) is the number of negative (resp. positive) crossings in the diagram.

To verify that $v_k$ is a finite type invariant of order $k$, we notice that, after substitution of $q=e^{-x/2}$ in Eq. \ref{Jskein}, we obtain

\begin{equation}
    \begin{split}
        J_{K_+}(x)-J_{K_{-}}(x)=x(\text{some mess}).
    \end{split}
\end{equation}

For a singular knot $K$ with $k$ double points, $J_K(x)$ is divisible by $x^k$, thus the coefficient of $x^k$ does not vanish. However, if $K$ has $k+1$ double points, the coefficient of $J_K(x)$ is divisible by $x^{k+1}$, which suggests that the coefficient of $x^k$ vanishes.
Thus, the coefficient of $x^k$, $v_k$, is a finite type invariant of degree $k$.
%\begin{remark}
%By Eq. \ref{vk}, we get the following expressions for the second and third Vassiliev invariants:

%\begin{equation}
%    \begin{split}
%        &v_2=\frac{1}{2}\sum_{s\in S}(-1)^{i(s)}(j(s)+n_{-}-2n_{+})^2\\
%        &v_3=\frac{1}{6}\sum_{s\in S}(-1)^{i(s)}(j(s)+n_{-}-2n_{+})^3
%    \end{split}
%\end{equation}

%\noindent to be compared with Eq. \ref{a2} and Eq. \ref{a3}, respectively.

%\end{remark}

\subsection{Vassiliev invariants of knotoids}\label{Vasknot}

%\begin{definition}\label{knotoid1}
%Let $K$ denote a knotoid. We define the $k$-th Vassiliev invariant of the knotoid $K$ as:

%\begin{equation}
%        v_k(K)=\frac{1}{4\pi k!}\sum_{s\in S}(-1)^{i(s)}(j(s)+n_{-}-2n_{+})^k
%\end{equation}

%\noindent \noindent where $S$ is the set of enhanced states of the knotoid $K$, $i(s)$ is the number of B-smoothings, $j(s)=i(s)+\lambda(s)$, $\lambda(s)$ is the algebraic number of circles and $n_{-}$ (resp. $n_{+}$) is the number of negative (resp. positive) crossings in the knotoid diagram.
%\end{definition}

%\begin{proposition}
%Let $K$ denote a knotoid. Then $v_k(K)$ is derived by the coefficients of the Jones polynomial of $K$.
%\end{proposition}

%\begin{proof}

%The normalized bracket polynomial of a knotoid $K$ is defined as \cite{Gugumcu2017}: 

%\begin{equation}\label{avnk}
%V_{K}=(-A^3)^{-wr(K)}\langle K\rangle
%\end{equation}

We will use the enhanced states expression of the Jones polynomial. Note that in enhanced states of knotoids, circles and arcs have an associated sign and if a circle or an arc has no sign, it is a sum of two states. A circle or an arc with a positive sign corresponds to $q$ and a circle or an arc with a negative sign corresponds to $q^{-1}$.

%$-A^2$ and a circle or an arc with a negative sign corresponds to $-A^{-2}$. 

The enhanced states expression of the Jones polynomial in $q$ of a knotoid $K$ is:

\begin{equation}
    \begin{split}
        J_K(q)=q^{n_{+}-2n_{-}}\sum_{s\in S}(-1)^{i(s)+n_{-}}q^{j(s)}.
    \end{split}
\end{equation}

We use the substitution $q=e^{-x/2}$ and then expand $J_K(q)$ in series of $x$:
        
\begin{equation}
    \begin{split}
        J_K(x)&=\sum_{k=0}^{\infty}\frac{(-1)^kx^k}{2^kk!}\sum_{s\in S}(-1)^{i(s)+n_{-}}((j(s)+n_{+}-2n_{-})^k.
    \end{split}
\end{equation}

The coefficient of $x^k$ in $J_K(x)$ is a Vassiliev invariant of order $k$ of the knotoid $K$:

\begin{equation}\label{defnknotoid}
        v_k(K)=\frac{(-1)^k}{2^k k!}\sum_{s\in S}(-1)^{i(s)+n_{-}}((j(s)+n_{+}-2n_{-})^k,
\end{equation}

\noindent where $S$ is the set of enhanced states of the knotoid $K$, $i(s)$ is the number of B-smoothings, $j(s)=i(s)+\lambda(s)$, $\lambda(s)$ is the algebraic number of circles and $n_{-}$ (resp. $n_{+}$) is the number of negative (resp. positive) crossings in the knotoid diagram.

%It was proved that $v_k$ is an invariant of knotoids  \cite{Manouras2020}.

%\begin{proposition}
%Let $K$ denote a knotoid. Then $v_k(K)$, is an invariant.
%\end{proposition}

%\begin{proof}
%The proof follows by the fact that the Jones polynomial of knotoids is an invariant of knotoids.

%\end{proof}

%The Jones polynomial of knotoids is an invariant of knotoids.

%We can define the Jones polynomial of knotoids by 

%Using the enhanced states expression of the Jones polynomial in this case, we get

%\begin{equation}
%    \begin{split}
%        J_K(q)=\frac{1}{4\pi}\int_{\vec{\xi}\in S^2}q^{n_{-,\vec{\xi}}-2n_{+,\vec{\xi}}}\sum_{s_{\vec{\xi}}\in S_{\vec{\xi}}}(-1)^{i(s_{\vec{\xi}})}q^{j(s_{\vec{\xi}})}\dA
%    \end{split}
%\end{equation}

%We use the substitution $q=e^x$ and then expand this in series of $x$.
        
%\begin{equation}
%    \begin{split}
%        J_K(q)&=\sum_{k=0}^{\infty}\frac{x^k}{k!}\frac{1}{4\pi}\int_{\vec{\xi}\in S^2}\sum_{s_{\vec{\xi}}\in S_{\vec{\xi}}}(-1)^{i(s_{\vec{\xi}})}(j(s_{\vec{\xi}})+n_{-,\vec{\xi}}-2n_{+,\vec{\xi}})^kdA\\
%    \end{split}
%\end{equation}

%Therefore, $v_k$ is the coefficient of $x^k$.

\section{Vassiliev measures of open curves in 3-space}\label{Vasopen}

%\subsection{Vassiliev measures and the Jones polynomial of open curves}\label{VassilievJones}

In this section we define a set of new measures of entanglement of open curves in 3-space that we call Vassiliev measures due to the similarity of their definition to Vassiliev invariants. However, these are not invariants for open curves and in order to avoid any confusion, we will denote them $w_k$ (instead of $v_k$). In the following, we will show that the same definition applies to both open and closed curves in 3-space. For this reason we give the definition in general for any curve in 3-space:

\begin{definition}\label{defnVasopen}
Let $l$ denote an open or closed curve in 3-space. We define the $k$-th Vassiliev measure as:

\begin{equation}
    \begin{split}
        &w_k=\frac{(-1)^k}{4\pi2^k k!}\int_{\vec{\xi}\in S^2}\sum_{s_{\vec{\xi}}\in S_{\vec{\xi}}}(-1)^{i(s_{\vec{\xi}})+n_{-,\vec{\xi}}}((j(s_{\vec{\xi}})+n_{+,\vec{\xi}}-2n_{-,\vec{\xi}})^k)dA,
    \end{split}
\end{equation}

\noindent \noindent where $S$ is the set of enhanced states of a projection of $l$, $l_{\vec{\xi}}$, $i(s)$ is the number of B-smoothings, $j(s)=i(s)+\lambda(s)$, $\lambda(s)$ is the algebraic number of circles and $n_{-}$ (resp. $n_{+}$) is the number of negative (resp. positive) crossings in the diagram and where the integral is over all vectors in $S^2$ except a set of measure zero (corresponding to non-generic projections).
\end{definition}

\begin{proposition}
Let $l$ denote an open or closed curve in 3-space.
The $k$-th Vassiliev measure, $w_k(l)$, is defined by the coefficients of the enhanced Jones polynomial of $l$.
\end{proposition}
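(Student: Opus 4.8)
The plan is to unwind Definition~\ref{defnVasopen} against the construction of the enhanced Jones polynomial of a curve in $3$-space. Recall that, following the construction of the Jones polynomial of open curves in \cite{Panagiotou2020b}, the enhanced Jones polynomial of a curve $l$ in $3$-space is the average, over the sphere of projection directions, of the enhanced Jones polynomials of the generic projections $l_{\vec{\xi}}$,
\begin{equation}
J_l(q)=\frac{1}{4\pi}\int_{\vec{\xi}\in S^2}J_{l_{\vec{\xi}}}(q)\,dA,
\end{equation}
where $l_{\vec{\xi}}$ is the knot diagram (if $l$ is closed) or knotoid diagram (if $l$ is open) obtained by projecting $l$ along $\vec{\xi}$, and the integrand is defined for all $\vec{\xi}$ outside the measure-zero set of non-generic directions. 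For a closed curve $J_{l_{\vec{\xi}}}$ is independent of $\vec{\xi}$, so the integral returns the ordinary enhanced Jones polynomial.

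First I would substitute $q=e^{-x/2}$ and expand each $J_{l_{\vec{\xi}}}$ in powers of $x$ exactly as in the computation leading to Eq.~\ref{vk} (closed case) and Eq.~\ref{defnknotoid} (open case), obtaining, for each fixed generic $\vec{\xi}$,
\begin{equation}
J_{l_{\vec{\xi}}}(x)=\sum_{k=0}^{\infty}\frac{(-1)^k x^k}{2^k k!}\sum_{s_{\vec{\xi}}\in S_{\vec{\xi}}}(-1)^{i(s_{\vec{\xi}})+n_{-,\vec{\xi}}}\bigl(j(s_{\vec{\xi}})+n_{+,\vec{\xi}}-2n_{-,\vec{\xi}}\bigr)^k.
\end{equation}
Then I would integrate this identity over $\vec{\xi}\in S^2$, interchange $\int_{S^2}$ with $\sum_{k\ge 0}$, and read off the coefficient of $x^k$ on the right-hand side; comparing with Definition~\ref{defnVasopen}, this coefficient is exactly $\frac{(-1)^k}{4\pi 2^k k!}\int_{S^2}\sum_{s_{\vec{\xi}}}(\cdots)^k\,dA=w_k(l)$. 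Hence $w_k(l)$ is precisely the coefficient of $x^k$ in $J_l(e^{-x/2})$, which is the assertion.

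The only step needing care is the interchange of the integral with the power series. This is legitimate because, for a fixed tame curve $l$, away from the measure-zero set of non-generic directions the projection $l_{\vec{\xi}}$ has at most $N$ crossings for some $N=N(l)$; hence $i(s_{\vec{\xi}})$, $\lambda(s_{\vec{\xi}})$, $n_{\pm,\vec{\xi}}$ and the number $|S_{\vec{\xi}}|$ of enhanced states are all bounded uniformly in $\vec{\xi}$, so the partial sums of the $x$-expansion are dominated, uniformly in $\vec{\xi}$ and in $x$ near $0$, by a single convergent power series. Dominated convergence, together with the measurability and integrability in $\vec{\xi}$ of $J_{l_{\vec{\xi}}}$ established in \cite{Panagiotou2020b}, then permits term-by-term integration, and the exceptional set of non-generic directions contributes nothing since it has measure zero. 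The main obstacle is therefore bookkeeping rather than substance: once the uniform bound on projection complexity is recorded, the identification of $w_k(l)$ with the $x^k$-coefficient of $J_l$ is immediate.
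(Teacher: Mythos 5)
Your proposal is correct and follows essentially the same route as the paper: both express the enhanced Jones polynomial of $l$ as the spherical average of the enhanced-state expansions of the projected diagrams, substitute $q=e^{-x/2}$, and identify $w_k(l)$ with the coefficient of $x^k$. The only addition is your explicit dominated-convergence justification for interchanging the integral with the power series, which the paper leaves implicit but which is a welcome piece of rigor.
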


\begin{proof}
Let $l$ denote a curve in 3-space. Let $(l)_{\vec{\xi}}$ denote the projection of $l$ on a plane with normal vector $\vec{\xi}$. Let $K((l)_{\vec{\xi}})$ denote the knotoid corresponding to $(l)_{\vec{\xi}}$.

The normalized bracket polynomial of $l$ was defined in \cite{Panagiotou2020b} as: 

\begin{equation}\label{avnk}
V_{K(l)}=\frac{1}{4\pi}\int_{\vec{\xi}\in S^2}(-A^3)^{-wr(K(l)_{\vec{\xi}})}\langle K((l)_{\vec{\xi}})\rangle dA,
\end{equation}

\noindent where the integral is over all vectors in $S^2$ except a set of measure zero (corresponding to non-generic projections).
The integrand is the Jones polynomial of the knotoid $K((l)_{\vec{\xi}})$.

Using the enhanced states expression of the Jones polynomial in this case, we get

\begin{equation}
    \begin{split}
        J_K(q)&=\frac{1}{4\pi}\int_{\vec{\xi}\in S^2}q^{n_{+,\vec{\xi}}-2n_{-,\vec{\xi}}}\sum_{s_{\vec{\xi}}\in S_{\vec{\xi}}}(-1)^{i(s_{\vec{\xi}})+n_{-,\vec{\xi}}}q^{j(s_{\vec{\xi}})}dA.
        %&=\frac{1}{4\pi}\int_{\vec{\xi}\in S^2}q^{n_{-,\vec{\xi}}-2n_{+,\vec{\xi}}}\sum_{s_{\vec{\xi}}\in S_{\vec{\xi}}}(-1)^{i(s_{\vec{\xi}})}q^{j(s_{\vec{\xi}})}\dA
    \end{split}
\end{equation}

We use the substitution $q=e^{-x/2}$ and then expand this in series of $x$.
        
\begin{equation}
    \begin{split}
        J_K(q)&=\sum_{k=0}^{\infty}\frac{(-1)^kx^k}{2^kk!}\frac{1}{4\pi}\int_{\vec{\xi}\in S^2}\sum_{s_{\vec{\xi}}\in S_{\vec{\xi}}}(-1)^{i(s_{\vec{\xi}})+n_{-,\vec{\xi}}}((j(s_{\vec{\xi}})+n_{+,\vec{\xi}}-2n_{-,\vec{\xi}})^k)dA.
    \end{split}
\end{equation}

Therefore, $w_k$ is the coefficient of $x^k$ in the enhanced Jones polynomial of $l$.

\end{proof}

\begin{corollary}\label{avv2}
Let $l$ denote an open curve in 3-space. The $k$th Vassiliev measure of $l$ derived from the enhanced Jones polynomial of $l$ is the average of the Vassiliev invariant in a projection over all possible projection directions, namely 

\begin{equation}
    w_k(l)=\frac{1}{4\pi}\int_{\vec{\xi}\in S^2}v_k(l_{\vec{\xi}})dA
\end{equation}

\end{corollary}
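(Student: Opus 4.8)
The plan is to compare the integral definition of $w_k$ from Definition \ref{defnVasopen} with the pointwise formula for $v_k$ of a knotoid given in Eq. \ref{defnknotoid}, and observe that the former is literally the spherical average of the latter. Concretely, for almost every direction $\vec{\xi}\in S^2$ the projection $l_{\vec{\xi}}$ is a generic knotoid diagram, so Eq. \ref{defnknotoid} applies to $l_{\vec{\xi}}$ and gives
\begin{equation*}
v_k(l_{\vec{\xi}})=\frac{(-1)^k}{2^k k!}\sum_{s_{\vec{\xi}}\in S_{\vec{\xi}}}(-1)^{i(s_{\vec{\xi}})+n_{-,\vec{\xi}}}\bigl(j(s_{\vec{\xi}})+n_{+,\vec{\xi}}-2n_{-,\vec{\xi}}\bigr)^k .
\end{equation*}
Substituting this expression into $\frac{1}{4\pi}\int_{\vec{\xi}\in S^2}v_k(l_{\vec{\xi}})\,dA$ reproduces exactly the integrand appearing in the definition of $w_k$, with the constant $\frac{(-1)^k}{2^k k!}$ combining with the $\frac{1}{4\pi}$ to give the stated $\frac{(-1)^k}{4\pi 2^k k!}$. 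This is essentially a bookkeeping identification of the two formulas.

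First I would fix notation: for $\vec{\xi}$ outside the measure-zero set of non-generic directions, write $l_{\vec{\xi}}$ for the projected knotoid diagram and identify $S_{\vec{\xi}}$, $i(s_{\vec{\xi}})$, $j(s_{\vec{\xi}})=i(s_{\vec{\xi}})+\lambda(s_{\vec{\xi}})$, $n_{\pm,\vec{\xi}}$ with the quantities entering Eq. \ref{defnknotoid} for the diagram $l_{\vec{\xi}}$. Then I would invoke Eq. \ref{defnknotoid} to rewrite the inner sum in Definition \ref{defnVasopen} as $2^k k!/(-1)^k$ times $v_k(l_{\vec{\xi}})$, pull the resulting constant outside the integral, and conclude $w_k(l)=\frac{1}{4\pi}\int_{\vec{\xi}\in S^2}v_k(l_{\vec{\xi}})\,dA$. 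The one genuine point requiring a sentence of justification is measurability and integrability of $\vec{\xi}\mapsto v_k(l_{\vec{\xi}})$: this follows because the combinatorial type of the generic projection is locally constant on the open complement of the non-generic set, so $v_k(l_{\vec{\xi}})$ is a locally constant (hence measurable) function that takes finitely many values, and its integral is finite. This is exactly the same kind of argument already used implicitly in \cite{Panagiotou2020b} when defining the average Jones polynomial in Eq. \ref{avnk}, so I would simply reference that.

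I do not expect a serious obstacle here — the corollary is an immediate rearrangement of the proposition's proof. The only thing to be careful about is that the statement is phrased for an open curve, whereas Eq. \ref{defnknotoid} was stated for knotoids; these are compatible precisely because the projection of an open curve in a generic direction \emph{is} a knotoid diagram, which is the premise underlying the whole construction in Section \ref{Vasopen}. I would also remark that the same identity holds for closed $l$ with $v_k$ the knot invariant of Eq. \ref{vk} and the integrand constant in $\vec{\xi}$ a.e., recovering $w_k(l)=v_k(l)$; but since the corollary as stated only asks for the open-curve case, a one-line remark suffices.
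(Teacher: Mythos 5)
Your proof is correct and takes the same route as the paper, whose entire proof is the one-line observation that the corollary follows directly from Definition \ref{defnVasopen} and Eq. \ref{defnknotoid}; you simply spell out the bookkeeping and add the (reasonable, though omitted by the authors) remarks on measurability and on generic projections being knotoid diagrams.
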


\begin{proof}
It follows directly from Definition \ref{defnVasopen} and Eq. \ref{defnknotoid}.
\end{proof}

\begin{proposition}
If $l$ is a closed curve in 3-space, then the $k$th Vassiliev measure, $w_k(l)$, is equal to the $k$th Vassiliev invariant, $v_k(l)$, obtained from the enhanced Jones polynomial of $l$, ie.  $w_k(l)=v_k(l)$

\end{proposition}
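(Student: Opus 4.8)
The plan is to reduce the statement to Corollary \ref{avv2} together with the elementary fact that, for a \emph{closed} curve, the enhanced Jones polynomial—and hence each of its Vassiliev invariants—does not depend on the projection direction. First I would recall that a closed curve $l$ in 3-space has a well-defined knot type, and that for every direction $\vec{\xi}\in S^2$ outside a set of measure zero the projection $l_{\vec{\xi}}$ is a regular knot diagram representing that knot type. Any two regular diagrams arising this way are related by a finite sequence of Reidemeister moves, so they all yield the same enhanced Jones polynomial, namely $J_l(q)$, the enhanced Jones polynomial of the knot $l$. The invariance of $v_k$ under Reidemeister moves was already established in Section \ref{Vassiliev}, so in particular $v_k(l_{\vec{\xi}})$ equals the $k$th Vassiliev invariant $v_k(l)$ of the knot $l$ given by Eq. \ref{vk}, for almost every $\vec{\xi}$.

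Next I would apply Corollary \ref{avv2}, which gives $w_k(l)=\frac{1}{4\pi}\int_{\vec{\xi}\in S^2}v_k(l_{\vec{\xi}})\,dA$. Since the integrand is constant almost everywhere, equal to $v_k(l)$, I can pull it out of the integral; using that the area of $S^2$ is $4\pi$, the integral collapses to $w_k(l)=v_k(l)$, as claimed.

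Equivalently, and perhaps more transparently, one may argue at the level of polynomials before extracting coefficients: the integrand in Eq. \ref{avnk} is the (enhanced) Jones polynomial of the knotoid $K((l)_{\vec{\xi}})$, which for a closed curve reduces to the Jones polynomial of the knot and equals $J_l(q)$ for almost every $\vec{\xi}$. Averaging this constant function over $S^2$ returns $J_l(q)$, so the averaged enhanced Jones polynomial of $l$ coincides with $J_l(q)$; performing the substitution $q=e^{-x/2}$ and comparing coefficients of $x^k$ then yields $w_k(l)=v_k(l)$.

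The only point requiring genuine care is the classical assertion that all generic projections of a closed curve represent one and the same knot and that the non-generic directions form a set of measure zero in $S^2$; everything else is bookkeeping with the formulas already set up in Sections \ref{Vassiliev} and \ref{Vasopen}. I expect this "genericity and Reidemeister equivalence of projections" step to be the main (and essentially the only) substantive ingredient of the proof.
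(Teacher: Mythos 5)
Your proof is correct and follows essentially the same route as the paper: the paper also invokes the fact that $v_k(l_{\vec{\xi}})$ is a topological invariant independent of the projection direction for a closed curve, so the average $\frac{1}{4\pi}\int_{S^2}v_k(l_{\vec{\xi}})\,dA$ collapses to $v_k(l)$. Your additional remarks on genericity and Reidemeister equivalence simply make explicit what the paper takes for granted.
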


\begin{proof}
If $l$ is a closed curve in 3-space, then $v_k(l)$ is a topological invariant, independent on the projection direction. Thus

\begin{equation}
    \begin{split}
        &w_k(l)=\frac{1}{4\pi}\int_{\vec{\xi\in S^2}}v_k(l_{\vec{\xi}})dA=v_k(l_{\vec{\xi}})=v_k(l)
        %\frac{(-1)^k}{4\pi2^k k!}\int_{\vec{\xi}\in S^2}\sum_{s_{\vec{\xi}}\in S_{\vec{\xi}}}(-1)^{i(s_{\vec{\xi}})+n_{-,\vec{\xi}}}((j(s_{\vec{\xi}})+n_{+,\vec{\xi}}-2n_{-,\vec{\xi}})^k)dA\\
        %&=\frac{(-1)^k}{4\pi2^k k!}\sum_{s_{\vec{\xi}}\in S_{\vec{\xi}}}(-1)^{i(s_{\vec{\xi}})+n_{-,\vec{\xi}}}((j(s_{\vec{\xi}})+n_{+,\vec{\xi}}-2n_{-,\vec{\xi}})^k)\\
        %&=v_k
    \end{split}
\end{equation}

\end{proof}

\begin{proposition}\label{discp}
Let $l$ denote an open curve in 3-space, then $w_k$ can be expressed as 

\begin{equation}
    w_k(l)=\sum_{K_i\in K(l)}p(K_i)v_k(K_i)
\end{equation}

\noindent where $K_i$ is a knotoid that appears in a projection of $l$ and $p(K_i)$ is the geometric probability that the projection of $l$ gives the knotoid $K_i$ and $K(l)$ is the set of possible knotoids that can result as a projection of $l$.

\end{proposition}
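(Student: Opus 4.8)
The plan is to start from Corollary \ref{avv2}, which already expresses $w_k(l)$ as the average $\frac{1}{4\pi}\int_{\vec{\xi}\in S^2} v_k(l_{\vec{\xi}})\,dA$ of the knotoid Vassiliev invariant over the sphere of projection directions, with the integral taken over all of $S^2$ except a measure-zero set of non-generic directions. The key observation is that the sphere $S^2$ of generic projection directions decomposes into finitely many (or at least measurably many) regions according to which knotoid $K_i$ the projection $l_{\vec{\xi}}$ produces. This decomposition is well defined because the knotoid type of $l_{\vec{\xi}}$ changes only when $\vec{\xi}$ crosses a non-generic direction (a direction where the projection has a singularity — a triple point, a cusp, a tangency, or an endpoint lying over a strand), and these non-generic directions form a set of measure zero on $S^2$. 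So first I would make precise that $S^2$ minus the bad set is partitioned into pieces $U_i = \{\vec{\xi} : K(l_{\vec{\xi}}) = K_i\}$, each of which is open and measurable, indexed by the finite set $K(l)$ of knotoids actually realized.

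Next I would invoke the definition of the geometric probability $p(K_i)$ as the normalized area of $U_i$, namely $p(K_i) = \frac{1}{4\pi}\int_{\vec{\xi}\in U_i} dA$; this is the natural notion of "probability that a uniformly random projection direction gives knotoid $K_i$," and it is presumably the definition already in force in the paper (or taken from \cite{Panagiotou2020b}). Then the proof is essentially a splitting of the integral:
\begin{equation*}
w_k(l) = \frac{1}{4\pi}\int_{\vec{\xi}\in S^2} v_k(l_{\vec{\xi}})\,dA = \frac{1}{4\pi}\sum_{K_i\in K(l)}\int_{\vec{\xi}\in U_i} v_k(l_{\vec{\xi}})\,dA.
\end{equation*}
On each region $U_i$ the integrand $v_k(l_{\vec{\xi}})$ is constant and equal to $v_k(K_i)$, since $l_{\vec{\xi}}$ represents the knotoid $K_i$ throughout $U_i$ and $v_k$ is a knotoid invariant by the construction in Section \ref{Vasknot}. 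Pulling the constant out of each integral gives $\sum_{K_i\in K(l)} v_k(K_i)\cdot \frac{1}{4\pi}\int_{U_i} dA = \sum_{K_i\in K(l)} p(K_i)\, v_k(K_i)$, which is the claim.

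The main obstacle — the only real content beyond bookkeeping — is justifying that the non-generic directions form a measure-zero set and that the complementary regions $U_i$ are measurable (so that Fubini-type splitting of the integral and the definition of $p(K_i)$ both make sense), and that there are only finitely many distinct knotoid types $K_i$. For a tame curve in 3-space (in particular a polygonal curve, which is the main case of interest later), the set of directions yielding a non-generic projection is a finite union of lower-dimensional analytic (or semi-algebraic) subsets of $S^2$, hence measure zero, and it cuts $S^2$ into finitely many connected components on each of which the projected knotoid type is constant; I would cite this standard genericity fact (it is implicit already in the well-definedness of Eq. \ref{avnk} and Corollary \ref{avv2}) rather than reprove it. With that in hand, the remaining steps are routine: everything reduces to the additivity of the integral over a measurable partition and the constancy of a topological invariant on each piece.
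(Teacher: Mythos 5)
Your proof is correct and takes essentially the same route as the paper: the paper likewise starts from the integral expression $w_k(l)=\frac{1}{4\pi}\int_{S^2}v_k(l_{\vec{\xi}})\,dA$, partitions the sphere into the (finitely many) antipodally symmetric regions on which the projected knotoid type is constant, identifies $p(K_i)$ with the normalized area of each region, and pulls the constant $v_k(K_i)$ out of the integral. Your added care about measurability and the measure-zero set of non-generic directions is a minor elaboration of what the paper takes for granted, not a different argument.
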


\begin{proof}
Any fixed curve in 3-space can give projections that result in only a finite number of knotoids, we denote $K_i$, where $i=1,\dotsc,n$. Then $p(K_i)=\frac{1}{4\pi}A_i$, where $A_i$ is the sum of two antipodal spherical areas that define normal vectors to planes where the projection of $l$ gives the knotoid $K_i$. Since $v_k(l_{\vec{\xi}})$ is constant in these areas, 

\begin{equation}
\begin{split}
    w_k(l)&=\frac{1}{4\pi}\int_{\vec{\xi}}v_k(l_{\vec{\xi}})dA\\
    &=\sum_{1\leq i\leq n}\frac{1}{4\pi}\int_{\vec{\xi}\in A_i}v_k(l_{\vec{\xi}})dA=\frac{1}{4\pi}\sum_{1\leq i\leq n}v_k(K_i)\int_{\vec{\xi}\in A_i}dA\\
    &=\frac{1}{4\pi}\sum_{1\leq i\leq n}v_k(K_i)A_i=\sum_{1\leq i\leq n}p(K_i)v_k(K_i).
    \end{split}
\end{equation}

\end{proof}

\begin{remark}
We can also write $w_k$ as follows:

\begin{equation}
    \begin{split}
        &w_k=\frac{(-1)^k}{2^kk!}\sum_{S,n_{+},n_{-}}p_{K,S,n_{-},n_{+}}\sum_{s\in S}(-1)^{i(s)+n_{-}}((j(s)+n_{+}-2n_{-})^k),
    \end{split}
\end{equation}

\noindent where $p_{K,S,n_{+},n_{-}}$ denotes the geometric probability that a projection of $K$ has $n_{+},n_{-}$ positive and negative crossings respectively, and gives the set of enhanced states $S$ and the sum is taken over all possible sets of states $S$ that can be generated by projections of $K$ with a given type of crossings $n_{+},n_{-}$.

\end{remark}

\begin{proposition}
Let $l$ denote an open curve in 3-space.
Then the $k$th Vassiliev measure of $l$, $w_k(l)$ is a continuous function of the curve coordinates.
\end{proposition}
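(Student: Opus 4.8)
The plan is to start from the integral representation of Corollary~\ref{avv2}, namely $w_k(l)=\frac{1}{4\pi}\int_{\vec{\xi}\in S^2} v_k(l_{\vec{\xi}})\,dA$, and to prove continuity in the curve coordinates by a dominated--convergence argument for the integrand $\vec{\xi}\mapsto v_k(l_{\vec{\xi}})$. Fix a curve $l$ and let $l_m$ be any sequence of curves whose coordinates converge to those of $l$; since $l$ is arbitrary, it suffices to show $w_k(l_m)\to w_k(l)$.

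First I would establish that the integrands converge pointwise almost everywhere on $S^2$. Let $N\subset S^2$ be the set of directions that are generic for $l$; by Definition~\ref{defnVasopen} its complement has measure zero. For $\vec{\xi}\in N$ the projection $l_{\vec{\xi}}$ is a generic knotoid diagram --- finitely many transverse double points, no triple points, no tangency of $\vec{\xi}$ to $l$, and endpoints lying off all strands with distinct images --- and each of these conditions is open in the pair (curve, direction). Moreover, the combinatorial type of the diagram, hence the entire enhanced state sum of \eqref{defnknotoid} and in particular $v_k$, is locally constant on the open locus of generic pairs, since under a small perturbation the transverse double points merely move continuously, no new ones appear, and no endpoint crosses a strand. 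Therefore, for each $\vec{\xi}\in N$ there is an index $M(\vec{\xi})$ with $(l_m)_{\vec{\xi}}$ generic and $v_k((l_m)_{\vec{\xi}})=v_k(l_{\vec{\xi}})$ for all $m\ge M(\vec{\xi})$; in particular $v_k((l_m)_{\vec{\xi}})\to v_k(l_{\vec{\xi}})$ for almost every $\vec{\xi}$.

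Next I would produce a uniform, integrable dominating bound. Along the convergent sequence the number of crossings $c((l_m)_{\vec{\xi}})$ is bounded by a constant $c_0$ independent of $m$ and of $\vec{\xi}$: for polygonal curves with $N$ edges one may take $c_0=\binom{N}{2}$, while for smooth curves one uses that the maximal crossing number over all projection directions is finite and stays bounded along a convergent sequence, via the finiteness and local boundedness of the preimages of the secant map of the curve. Feeding $c_0$ into the state--sum formula \eqref{vk} --- at most $2^{2c_0+1}$ enhanced states, each contributing a term of absolute value at most $(4c_0+1)^k$ --- yields a constant $C=C(c_0,k)$ with $|v_k((l_m)_{\vec{\xi}})|\le C$ for every $m$ and every generic $\vec{\xi}$. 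Since the constant function $C$ is integrable on $S^2$, the dominated convergence theorem gives $w_k(l_m)=\frac{1}{4\pi}\int_{S^2} v_k((l_m)_{\vec{\xi}})\,dA\longrightarrow\frac{1}{4\pi}\int_{S^2} v_k(l_{\vec{\xi}})\,dA=w_k(l)$, which is the claim. Alternatively, one can phrase the same estimate through Proposition~\ref{discp}, checking that the antipodal spherical regions $A_i$ move continuously and that regions that appear or disappear do so with area tending to zero.

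I expect the main obstacle to be the uniform crossing--number bound in the smooth case, that is, ruling out that curves converging to $l$ develop projections with arbitrarily many crossings; for polygonal curves this is immediate, but in general it requires a transversality/compactness argument for the secant map. A secondary technical point is the local constancy of the combinatorial diagram type on the generic locus, which is the standard stability of transverse planar configurations under small perturbations of the curve and the projection direction, handled exactly as in the continuity proof for the Jones polynomial of open curves in \cite{Panagiotou2020b}.
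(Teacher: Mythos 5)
Your proof is correct and it takes a genuinely different route from the paper's. The paper's own argument is much shorter: it invokes Proposition~\ref{discp} to write $w_k(l_n)=\sum_i p(K_i)v_k(K_i)$ as a finite sum over the knotoid types appearing in projections of a polygonal curve, cites \cite{Panagiotou2020b} for the continuity of each geometric probability $p(K_i)$ in the vertex coordinates, and then lets $n\to\infty$ for general curves. You instead work directly with the sphere integral of Corollary~\ref{avv2} and run a dominated-convergence argument: pointwise a.e.\ stability of the diagram's combinatorial type (hence of $v_k(l_{\vec\xi})$) at generic directions, plus a uniform bound on $|v_k|$ extracted from the enhanced state sum via a crossing-number bound. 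What your approach buys is that it sidesteps a point the paper's proof glosses over --- under a perturbation of the curve the \emph{set} of achievable knotoids $K(l)$ can change, so continuity of each individual $p(K_i)$ does not by itself give continuity of the sum unless one also checks that regions appear or disappear with vanishing area; you flag exactly this in your closing remark. What the paper's decomposition buys is economy, since the hard analytic content is outsourced to the earlier continuity result for the probabilities $p(K_i)$. Note that both arguments are fully rigorous only for polygonal curves: the paper's closing sentence ``as $n\to\infty$ the result follows'' is itself insufficient (a pointwise limit of continuous functions need not be continuous), and your honest identification of the uniform crossing-number bound for smooth curves as the remaining obstacle is the same gap seen from the other side. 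Your quantitative bound $|v_k|\le C(c_0,k)$ with at most $2^{2c_0+1}$ enhanced states each of size at most $(4c_0+1)^k/(2^k k!)$ checks out against Eq.~\eqref{vk}.
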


\begin{proof}
Let us consider a polygonal curve of $n$ edges, $l_n$. Then, by Proposition \ref{discp}, 

\begin{equation}
    w_k(l_n)=\sum_{K_i\in K(l)}p(K_i)v_k(K_i),
\end{equation}

\noindent where $K_i$ is a knotoid that appears in a projection of $l$ and $p(K_i)$ is the geometric probability that the projection of $l$ gives the knotoid $K_i$ and $K(l)$ is the set of possible knotoids that can result as a projection of $l$.
In \cite{Panagiotou2020b} it was shown that $p(K_i)$ is a continuous function of the curve coordinates. Thus $w_k(l_n)$ is a continuous function of the coordinates of $l_n$.
As $n\rightarrow\infty$, the result follows for any curve $l$.

\end{proof}

%The following proposition shows that the Vassiilev measure of an open curve derived from its Jones polynomial is the average of the Vassiliev invariants of its projection over all possible projection directions.

\section{The second Vassiliev invariant of the enhanced Jones polynomial of knots}\label{knots2}

In this section we study the second Vassiliev invariant of knots derived by the enhanced Jones polynomial and show that it can be calculated using a Gauss code diagram from any knot diagram.

%that the second Vassiliev invariant of knotoids has a simple diagramatic interpretation as sum of pairs of alternating crossings as the Casson invariant of knots.  We will need this result later for open curves in 3-space.

\begin{theorem}\label{vaskeinknots}
The first three Vassiliev invariants defined by  the enhanced Jones polynomial, satisfy the equations

\begin{equation}
\begin{split}
&v_0(K)=v_0(\bigcirc)=2\\
&v_0(L)=v_0(\bigcirc\bigcirc)=4\\
& v_1(K_+)=v_1(K_-)\\
&    v_2(K_{+})-v_2(K_{-})=-6lk(K_0)
    \end{split}
\end{equation}

\noindent where $lk$ denotes the linking number of $K_0$
\end{theorem}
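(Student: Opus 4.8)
The plan is to obtain all four identities from a single source --- the enhanced-Jones skein relation (\ref{Jskein}) with $q=e^{-x/2}$ substituted --- together with two crossing-free base computations and the classical fact that finitely many crossing changes convert any knot diagram to the trivial one and any $c$-component link diagram to the $c$-component unlink, the number of components being preserved at every step (via ascending/descending diagrams). Recall from the derivation of (\ref{vk}) that $v_k(K)$ is the coefficient of $x^k$ in $J_K(x):=J_K(q)\big|_{q=e^{-x/2}}=\sum_{k\ge 0}v_k(K)\,x^k$. Under $q=e^{-x/2}$ the monomials $q^{-2},q^{2},q^{-1}-q$ become $e^{x},e^{-x},2\sinh(x/2)$, so (\ref{Jskein}) reads
\begin{equation}\label{expskein}
e^{x}J_{K_+}(x)-e^{-x}J_{K_-}(x)=2\sinh(x/2)\,J_{K_0}(x),
\end{equation}
and the whole proof is an exercise in extracting the coefficients of $x^0,x^1,x^2$.

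First I would evaluate $J$ on crossing-free diagrams directly from the enhanced state sum: the $c$-circle diagram has bracket $(q+q^{-1})^c$ and no framing correction, so $J_{\bigcirc}(q)=q+q^{-1}=2\cosh(x/2)=2+\tfrac14 x^2+\cdots$ and $J_{\bigcirc\bigcirc}(q)=(q+q^{-1})^2=4\cosh^2(x/2)=4+x^2+\cdots$. Hence $v_0(\bigcirc)=2$, $v_0(\bigcirc\bigcirc)=4$, $v_1(\bigcirc)=v_1(\bigcirc\bigcirc)=0$, and $v_0$ of the $c$-component unlink is $2^c$. The $x^0$-coefficient of (\ref{expskein}) gives $v_0(K_+)=v_0(K_-)$; combined with the reduction to trivial diagrams this forces $v_0\equiv 2^c$ on all $c$-component links, in particular $v_0(K)=v_0(\bigcirc)=2$ for every knot $K$ and $v_0(L)=v_0(\bigcirc\bigcirc)=4$ for every $2$-component link $L$.

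Next, the $x^1$-coefficient of (\ref{expskein}) gives $v_1(K_+)-v_1(K_-)=v_0(K_0)-v_0(K_+)-v_0(K_-)$. For a skein triple of knots $K_\pm$ (so $K_0$ is a $2$-component link) the right side is $4-2-2=0$, proving $v_1(K_+)=v_1(K_-)$; together with $v_1(\bigcirc)=0$ and the reduction to the unknot this gives $v_1\equiv 0$ on knots, which I will use below. Applying the same identity on $2$-component links $L$: at a crossing between the two components the right side is $2-4-4=-6$, while at a self-crossing it is $8-4-4=0$; since $lk(L_+)-lk(L_-)=\pm1$ in the first case and $0$ in the second, the quantity $v_1+6\,lk$ is unchanged by every crossing change, and it vanishes on the $2$-unlink, so $v_1(L)=-6\,lk(L)$ for all $2$-component links.

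Finally, reading off the $x^2$-coefficient of (\ref{expskein}) gives
\begin{equation}
\tfrac12\big(v_0(K_+)-v_0(K_-)\big)+\big(v_1(K_+)+v_1(K_-)\big)+\big(v_2(K_+)-v_2(K_-)\big)=v_1(K_0).
\end{equation}
For a skein triple of knots $K_\pm$ the first two grouped terms vanish by the previous steps ($v_0(K_\pm)=2$, $v_1(K_\pm)=0$), leaving $v_2(K_+)-v_2(K_-)=v_1(K_0)$; and $K_0$ is then a $2$-component link, so $v_1(K_0)=-6\,lk(K_0)$, which is the asserted relation. The only genuinely delicate points are bookkeeping ones: pinning down the sign of $lk(L_+)-lk(L_-)$ against the orientation convention implicit in (\ref{Jskein}) --- which I would settle by checking the Hopf link against the formula --- and making sure the "reduce to a trivial diagram by crossing changes" step is applied in the correct category, with the number of components fixed at every stage. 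Everything else is routine power-series expansion.
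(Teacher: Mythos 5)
Your proposal is correct and follows essentially the same route as the paper: substitute $q=e^{-x/2}$ into the skein relation (\ref{Jskein}), equate the coefficients of $x^0$, $x^1$, $x^2$, and reduce to trivial diagrams by crossing changes. Your treatment of the intermediate step $v_1(L)=-6\,lk(L)$ is in fact more careful than the paper's: you separate inter-component crossings from self-crossings (where the smoothing produces a $3$-component link and the skein contribution is $8-4-4=0$) and observe that $v_1+6\,lk$ is invariant under all crossing changes, whereas the paper only loosely asserts that the number of crossings changed equals the linking number.
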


\begin{proof}
We start with the skein relation of the enhanced Jones polynomial:

\begin{equation}\label{Jskein2}
q^{-2}J_{K_+}(q)-q^2J_{K_-}(q)=(q^{-1}-q)J_{K_0}.
\end{equation}

\noindent After expressing $q$ as  $q=e^{-x/2}=\sum_{k=0}^{\infty}\frac{(-1)^kx^k}{2^kk!}$, we get

\begin{equation}
    J_{(K_0)}=\sum_{k=0}^{\infty}\frac{(-1)^kx^k}{2^kk!}\sum_{s_0\in S_0}(-1)^{i(s_0)+n_{-}^{(0)}}((j(s_0)+n_{+}^{(0)}-2n_{-}^{(0)})^k)
\end{equation}

\begin{equation}
    J_{(K_+)}=\sum_{k=0}^{\infty}\frac{(-1)^kx^k}{2^kk!}\sum_{s_+\in S_+}(-1)^{i(s_+)+n_{-}^{(+)}}((j(s_+)+n_{+}^{(+)}-2n_{-}^{(+)})^k)
\end{equation}

\begin{equation}
    J_{(K_-)}=\sum_{k=0}^{\infty}\frac{(-1)^kx^k}{2^kk!}\sum_{s_-\in S_-}(-1)^{i(s_-)+n_{-}^{(-)}}((j(s_-)+n_{+}^{(-)}-2n_{-}^{(-)})^k)
\end{equation}

We replace these expressions in Eq. \ref{Jskein2} and set $q=e^{-x/2}=\sum_{l=0}^{\infty}\frac{(-1)^lx^l}{2^ll!}$ to get:

\begin{equation}\label{eqrl}
    \begin{split}
    &(q^{-1}-q)J_{(K_0)}(q)=q^{-2}J_{(K_{+})}(q)-q^2J_{(K_{-})}(q)\Leftrightarrow\\
        &\Bigl(\sum_{l=1,odd}^{\infty}\frac{x^l}{l!2^{l-1}}\Bigr)\sum_{k=0}^{\infty}\frac{(-1)^kx^k}{2^kk!}\sum_{s_0\in S_0}(-1)^{i(s_0)+n_{-}^{(0)}}((j(s_0)+n_{+}^{(0)}-2n_{-}^{(0)})^k\\
        &=\Bigl(\sum_{l=0}^{\infty}\frac{1}{l!}x^l\Bigr)\sum_{k=0}^{\infty}\frac{(-1)^kx^k}{2^kk!}\sum_{s_+\in S_+}(-1)^{i(s_+)+n_{-}^{(+)}}((j(s_+)+n_{+}^{(+)}-2n_{-}^{(+)})^k\\
        &-\Bigl(\sum_{l=0}^{\infty}\frac{(-1)^lx^l}{l!}\Bigr)\sum_{k=0}^{\infty}\frac{(-1)^kx^k}{2^kk!}\sum_{s_-\in S_-}(-1)^{i(s_-)+n_{-}^{(-)}}((j(s_-)+n_{+}^{(-)}-2n_{-}^{(-)})^k\\
        \end{split}
\end{equation}

The left hand side of the latter equation can be expressed as

\begin{equation}\label{eqrl1}
    \begin{split}
        &\Bigl(\sum_{l=1,odd}^{\infty}\frac{x^l}{l!2^{l-1}}\Bigr)\sum_{k=0}^{\infty}\frac{(-1)^kx^k}{2^kk!}\sum_{s_0\in S_0}(-1)^{i(s_0)+n_{-}^{(0)}}((j(s_0)+n_{+}^{(0)}-2n_{-}^{(0)})^k\\
        &=(x+\frac{x^3}{24}+\dotsc)\cdot(\sum_{s_0\in S_0}(-1)^{i(s_0)+n_{-}^{(0)}}-\frac{1}{2}x\sum_{s_0\in S_0}(-1)^{i(s_0)+n_{-}^{(0)}}(j(s_0)+n_{+}^{(0)}-2n_{-}^{(0)})+\dotsc)\\
        &=x\sum_{s_0\in S_0}(-1)^{i(s_0)+n_{-}^{(0)}}-\frac{1}{2}x^2\sum_{s_0\in S_0}(-1)^{i(s_0)+n_{-}^{(0)}}(j(s_0)+n_{+}^{(0)}-2n_{-}^{(0)})+\dotsc\\
    \end{split}
\end{equation}

\noindent where the remaining terms involve higher powers of $x$.

The first sum in the right hand side can be expressed as

\begin{equation}\label{eqrl2}
    \begin{split}
    &\Bigl(\sum_{l=0}^{\infty}\frac{1}{l!}x^l\Bigr)\sum_{k=0}^{\infty}\frac{x^k}{2^kk!}\sum_{s_+\in S_+}(-1)^{i(s_+)+n_{-}^{(+)}}((j(s_+)+n_{+}^{(+)}-2n_{-}^{(+)})^k\\
         &=(1+x+\frac{1}{2}x^2+\dotsc)(\sum_{s_+\in S_+}(-1)^{i(s_+)+n_{-}^{(+)}}\\
        &-\frac{1}{2}x\sum_{s_+\in S_+}(-1)^{i(s_+)+n_{-}^{(+)}}(j(s_+)+n_{+}^{(+)}-2n_{-}^{(+)})+\frac{x^2}{8}\sum_{s_+\in S_+}(-1)^{i(s_+)+n_{-}^{(+)}}((j(s_+)+n_{+}^{(+)}-2n_{-}^{(+)})^2+\dotsc)\\
        &=\sum_{s_+\in S_+}(-1)^{i(s_+)+n_{-}^{(+)}}\\
        &-\frac{1}{2}x\sum_{s_+\in S_+}(-1)^{i(s_+)+n_{-}^{(+)}}(j(s_+)+n_{+}^{(+)}-2n_{-}^{(+)})+\frac{x^2}{8}\sum_{s_+\in S_+}(-1)^{i(s_+)+n_{-}^{(+)}}((j(s_+)+n_{+}^{(+)}-2n_{-}^{(+)})^2)\\
        &+x\sum_{s_+\in S_+}(-1)^{i(s_+)+n_{-}^{(+)}}-\frac{1}{2}x^2\sum_{s_+\in S_+}(-1)^{i(s_+)+n_{-}^{(+)}}(j(s_+)+n_{+}^{(+)}-2n_{-}^{(+)})\\
        &+\frac{1}{2}x^2\sum_{s_+\in S_+}(-1)^{i(s_+)+n_{-}^{(+)}}+\dotsc\\
        &=\sum_{s_+\in S_+}(-1)^{i(s_+)+n_{-}^{(+)}}+x\Bigl(-\frac{1}{2}\sum_{s_+\in S_+}(-1)^{i(s_+)+n_{-}^{(+)}}(j(s_+)+n_{+}^{(+)}-2n_{-}^{(+)})+\sum_{s_+\in S_+}(-1)^{i(s_+)+n_{-}^{(+)}}\Bigr)\\
        &+x^2\Bigl(\frac{1}{8}\sum_{s_+\in S_+}(-1)^{i(s_+)+n_{-}^{(+)}}((j(s_+)+n_{+}^{(+)}-2n_{-}^{(+)})^2)-\frac{1}{2}\sum_{s_+\in S_+}(-1)^{i(s_+)+n_{-}^{(+)}}(j(s_+)+n_{+}^{(+)}-2n_{-}^{(+)})\\
        &+\frac{1}{2}\sum_{s_+\in S_+}(-1)^{i(s_+)+n_{-}^{(+)}}\Bigr)+\dotsc
    \end{split}
\end{equation}

The second term in the right hand side gives:

\begin{equation}\label{eqrl3}
    \begin{split}
    &-\Bigl(\sum_{l=0}^{\infty}\frac{(-1)^lx^l}{l!}\Bigr)\sum_{k=0}^{\infty}\frac{(-1)^kx^k}{2^kk!}\sum_{s_-\in S_-}(-1)^{i(s_-)+n_{-}^{(-)}}((j(s_-)+n_{+}^{(-)}-2n_{-}^{(-)})^k\\
         &=-(1-x+\frac{x^2}{2}+\dotsc)(\sum_{s_-\in S_-}(-1)^{i(s_-)+n_{-}^{(-)}}+\\
        &-\frac{1}{2}x\sum_{s_-\in S_-}(-1)^{i(s_-)+n_{-}^{(-)}}(j(s_-)+n_{+}^{(-)}-2n_{-}^{(-)})+\frac{x^2}{8}\sum_{s_-\in S_-}(-1)^{i(s_-)+n_{-}^{(-)}}((j(s_-)+n_{+}^{(-)}-2n_{-}^{(-)})^2+\dotsc\\
        &=-\sum_{s_-\in S_-}(-1)^{i(s_-)+n_{-}^{(-)}}+\\
        &+\frac{1}{2}x\sum_{s_-\in S_-}(-1)^{i(s_-)+n_{-}^{(-)}}(j(s_-)+n_{+}^{(-)}-2n_{-}^{(-)})-\frac{x^2}{8}\sum_{s_-\in S_-}(-1)^{i(s_-)+n_{-}^{(-)}}((j(s_-)+n_{+}^{(-)}-2n_{-}^{(-)})^2\\
        &+x\sum_{s_-\in S_-}(-1)^{i(s_-)+n_{-}^{(-)}}-\frac{1}{2}x^2\sum_{s_-\in S_-}(-1)^{i(s_-)+n_{-}^{(-)}}(j(s_-)+n_{+}^{(-)}-2n_{-}^{(-)})\\
        &-\frac{1}{2}x^2\sum_{s_-\in S_-}(-1)^{i(s_-)+n_{-}^{(-)}}+\dotsc\\
        &=-\sum_{s_-\in S_-}(-1)^{i(s_-)+n_{-}^{(-)}}+x\Bigl(\frac{1}{2}\sum_{s_-\in S_-}(-1)^{i(s_-)+n_{-}^{(-)}}(j(s_-)+n_{+}^{(-)}-2n_{-}^{(-)})+\sum_{s_-\in S_-}(-1)^{i(s_-)+n_{-}^{(-)}}\Bigr)\\
        &+x^2\Bigl(-\frac{1}{8}\sum_{s_-\in S_-}(-1)^{i(s_-)+n_{-}^{(-)}}((j(s_-)+n_{+}^{(-)}-2n_{-}^{(-)})^2-\frac{1}{2}\sum_{s_-\in S_-}(-1)^{i(s_-)+n_{-}^{(-)}}(j(s_-)+n_{+}^{(-)}-2n_{-}^{(-)})\\
        &-\frac{1}{2}\sum_{s_-\in S_-}(-1)^{i(s_-)+n_{-}^{(-)}}\Bigr)+\dotsc
    \end{split}
\end{equation}

By replacing Eq. \ref{eqrl1}, Eq. \ref{eqrl2} and Eq. \ref{eqrl3} in Eq. \ref{eqrl} and by equating the coefficients of $x^0$ in the left and right hand side of Eq.\ref{eqrl} we obtain the following relation

\begin{equation}\label{eq2}
\begin{split}
&0=\sum_{s_+\in S_+}(-1)^{i(s_+)+n_{-}^{(+)}}-\sum_{s_-\in S_-}(-1)^{i(s_-)+n_{-}^{(-)}}\Leftrightarrow \sum_{s_+\in S_+}(-1)^{i(s_+)+n_{-}^{(+)}}=\sum_{s_-\in S_-}(-1)^{i(s_-)+n_{-}^{(-)}}\\
    &\Leftrightarrow v_0(K_+)=v_0(K_-)
    \end{split}
\end{equation}

Thus, $v_0$ is invariant upon a crossing change. So, if $K$ is a knot, then we can change crossings so that we obtain the unknot, thus

\begin{equation}\label{eq2}
    v_0(K)=v_0(\bigcirc)=2
\end{equation}

Similarly, if $L$ is a link of two components, we can change crossings in order to get the trivial link of two components:

\begin{equation}\label{eq2}
    v_0(L)=v_0(\bigcirc \bigcirc)=4
\end{equation}

%\begin{equation}\label{eq2}
%\begin{split}
%    v_1(K_+)&=\sum_{s_+\in S_+}(-1)^{i(s_+)+n_{-}^{(+)}}=\sum_{s_+\in S_+,A}(-1)^{i(s_+)+n_{-}^{(+)}}+\sum_{s_+\in S_+,B}(-1)^{i(s_+)+n_{-}^{(+)}}\\
%    &=\sum_{s_+\in S_+,A}(-1)^{i(s_+)+n_{-}^{(+)}}+\sum_{s_+\in S_+,A}(-1)^{i(s_+)+1+n_{-}^{(+)}}\\
%    &=\sum_{s_+\in S_+,A}(-1)^{i(s_+)+n_{-}^{(+)}}-\sum_{s_+\in S_+,A}(-1)^{i(s_+)+n_{-}^{(+)}}=0\\
%\end{split}
%\end{equation}

%Similarly, we show that

%\begin{equation}
%    v_0(K_+)=v_0(K_-)=v_0(K_0)=0
%\end{equation}

By equating the coefficients of $x$ in Eq. \ref{eqrl}, we obtain the following relation

\begin{equation}\label{Eq31}
\begin{split}
&\Bigl(\sum_{s_0\in S_0}(-1)^{i(s_0)+n_{-}^{(0)}}\Bigr)\\
&=\Bigl(-\frac{1}{2}\sum_{s_+\in S_+}(-1)^{i(s_+)+n_{-}^{(+)}}(j(s_+)+n_{+}^{(+)}-2n_{-}^{(+)})+\sum_{s_+\in S_+}(-1)^{i(s_+)+n_{-}^{(+)}}\Bigr)+\\
&+\Bigl(\frac{1}{2}\sum_{s_-\in S_-}(-1)^{i(s_-)+n_{-}^{(-)}}(j(s_-)+n_{+}^{(-)}-2n_{-}^{(-)})+\sum_{s_-\in S_-}(-1)^{i(s_-)+n_{-}^{(-)}}\Bigr)\\
&\Leftrightarrow -\frac{1}{2}\sum_{s_+\in S_+}(-1)^{i(s_+)+n_{-}^{(+)}}(j(s_+)+n_{+}^{(+)}-2n_{-}^{(+)})+\frac{1}{2}\sum_{s_-\in S_-}(-1)^{i(s_-)+n_{-}^{(-)}}(j(s_-)+n_{+}^{(-)}-2n_{-}^{(-)})\\
&=\sum_{s_0\in S_0}(-1)^{i(s_0)+n_{-}^{(0)}}-\sum_{s_+\in S_+}(-1)^{i(s_+)+n_{-}^{(+)}}-\sum_{s_-\in S_-}(-1)^{i(s_-)+n_{-}^{(-)}}\\
&\Leftrightarrow v_1(K_+)-v_1(K_-)=v_0(K_0)-v_0(K_+)-v_0(K_-)
\end{split}
\end{equation}

If $K_+$ is a knot, then Eq. \ref{Eq31} gives

\begin{equation}
    \begin{split}
        v_1(K_+)-v_1(K_-)=v_0(K_0)-v_0(K_+)-v_0(K_-)=4-2-2=0
    \end{split}
\end{equation}

Thus, in that case

\begin{equation}
    v_1(K_-)=v_1(K_+)
\end{equation}

Therefore, $v_1(K)$ does not change upon crossing changes. So,

\begin{equation}
    v_1(K)=v_1(\bigcirc)=0
\end{equation}

If $K_+$ is a link, then $K_-$ is also a link and $K_0$ is a knot. Then Eq. \ref{Eq31} gives 

\begin{equation}
    \begin{split}
        v_1(K_+)-v_1(K_-)&=v_0(K_0)-v_0(K_+)-v_0(K_-)=2-4-4=-6
    \end{split}
\end{equation}

So, 

\begin{equation}
    \begin{split}
        v_1(K_+)&=v_1(K_-)-6
    \end{split}
\end{equation}

Given a link $K$, we can get the trivial link by changing crossings. Suppose that we need to change $\lambda$ crossings. Then $\lambda=lk$, where $lk$ is the linking number of $K$, and

\begin{equation}\label{lkn}
    \begin{split}
        v_1(K_+)&=-6lk(K_0)
    \end{split}
\end{equation}

By equating the coefficients of $x^2$ in Eq. \ref{eqrl} we get

\begin{equation}
    \begin{split}
    &-\frac{1}{2}\sum_{s_0\in S_0}(-1)^{i(s_0)+n_{-}^{(0)}}(j(s_0)+n_{+}^{(0)}-2n_{-}^{(0)})\\
    &=\Bigr(\frac{1}{8}\sum_{s_+\in S_+}(-1)^{i(s_+)+n_{-}^{(+)}}((j(s_+)+n_{+}^{(+)}-2n_{-}^{(+)})^2-\frac{1}{2}\sum_{s_+\in S_+}(-1)^{i(s_+)+n_{-}^{(+)}}(j(s_+)+n_{+}^{(+)}-2n_{-}^{(+)})\\
        &+\frac{1}{2}\sum_{s_+\in S_+}(-1)^{i(s_+)+n_{-}^{(+)}}\Bigr)\\
        &+\Bigl(-\frac{1}{8}\sum_{s_-\in S_-}(-1)^{i(s_-)+n_{-}^{(-)}}((j(s_-)+n_{+}^{(-)}-2n_{-}^{(-)})^2\\
        &-\frac{1}{2}\sum_{s_-\in S_-}(-1)^{i(s_-)+n_{-}^{(-)}}(j(s_-)+n_{+}^{(-)}-2n_{-}^{(-)})-\frac{1}{2}\sum_{s_-\in S_-}(-1)^{i(s_-)+n_{-}^{(-)}}\Bigr)\\
        &\Leftrightarrow \frac{1}{8}\sum_{s_+\in S_+}(-1)^{i(s_+)+n_{-}^{(+)}}((j(s_+)+n_{+}^{(+)}-2n_{-}^{(+)})^2\\
        &-\frac{1}{8}\sum_{s_-\in S_-}(-1)^{i(s_-)+n_{-}^{(-)}}((j(s_-)+n_{+}^{(-)}-2n_{-}^{(-)})^2\\
        &=-\frac{1}{2}\sum_{s_0\in S_0}(-1)^{i(s_0)+n_{-}^{(0)}}(j(s_0)+n_{+}^{(0)}-2n_{-}^{(0)})+\frac{1}{2}\sum_{s_+\in S_+}(-1)^{i(s_+)+n_{-}^{(+)}}(j(s_+)+n_{+}^{(+)}-2n_{-}^{(+)})\\
        &+\frac{1}{2}\sum_{s_-\in S_-}(-1)^{i(s_-)+n_{-}^{(-)}}(j(s_-)+n_{+}^{(-)}-2n_{-}^{(-)})-\frac{1}{2}\sum_{s_+\in S_+}(-1)^{i(s_+)+n_{-}^{(+)}}+\frac{1}{2}\sum_{s_-\in S_-}(-1)^{i(s_-)+n_{-}^{(-)}}\\
        &\Leftrightarrow v_2(K_+)-v_2(K_-)=v_1(K_0)-v_1(K_+)-v_1(K_-)-\frac{1}{2}v_0(K_+)+\frac{1}{2}v_0(K_-)\\
        &\Leftrightarrow v_2(K_+)-v_2(K_-)=v_1(K_0)\\
        %&\Leftrightarrow v_2(K_+)-v_2(K_-)=v_1(K_0)+2v_1(K_+)-20+\sum_{s_-\in S_-}(-1)^{i(s_-)+n_{-}^{(-)}}(j(s_-)+n_{+}^{(-)}-2n_{-}^{(-)})\\
    \end{split}
\end{equation}

Thus, if $K_+$ is a knot, using Eq. \ref{lkn}, we get

\begin{equation}
    \begin{split}
    v_2(K_+)-v_2(K_-)=-6lk(K_0)
    \end{split}
\end{equation}

\end{proof}

\begin{remark}
It is interesting to point out that a similar relation exists for the Casson invariant, the second Vassiliev invariant obtained from the Conway polynomial \cite{Chmutov2012}.
\end{remark}

In the following we will refer to Gauss diagrams.

\begin{definition}
A Gauss diagram is a way to describe a knot diagram. A Gauss diagram is the of a immersing circle with the preimages of each double point (associated with the knot diagram) connected with a chord. To incorporate the information on overpasses and underpasses, chords have an orientation from the over arc to the under arc. Gauss diagrams can have a base point and an orientation that matches a base point on a knot and the orientation of the knot. 
Given a knot $K$, we will denote $\langle \text{Gauss diagram 1 + Gauss diagram 2}, K\rangle$ the sum over all subdiagrams of $K$ isomorphic to either Gauss diagram 1 or Gauss diagram 2.

A Gauss diagram can be used to describe knotoid diagrams as well. In this case, the Gauss diagram has a starting and an endpoint matching those of the knotoid. 
\end{definition}

\begin{theorem}\label{knotv2}
Let $\hat{v}_2$ be defined as follows:

\begin{equation}
    \hat{v}_{2}(K)=\frac{1}{2}\sum_{j_1>j_2>j_3>j_4\in I_{\vec{\xi}}\prime}\epsilon(j_1,j_3)\epsilon(j_2,j_4),
\end{equation}

\noindent where $I\prime$ denotes the set of pairs of alternating crossings in the diagram of the knot $K$.

$\hat{v}_2$ is a second Vassiliev invariant of knotoids, and $v_2=\frac{1}{4}+6\hat{v}_2$, where $v_2$ denotes the second Vassiliev invariant from the enhanced Jones polynomial. 
\end{theorem}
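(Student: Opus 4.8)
The plan is to establish the statement in three steps. \textbf{Step 1:} show that the right‑hand side, evaluated on any diagram, depends only on the underlying knotoid, so that it defines a knotoid invariant $\hat v_2$. \textbf{Step 2:} show $\hat v_2$ has finite type of order exactly $2$. \textbf{Step 3:} pin down the affine relation $v_2=\tfrac14+6\hat v_2$ on knots by matching the crossing‑change formula of Theorem~\ref{vaskeinknots} and evaluating both sides on the trivial object. Throughout I would pass to Gauss diagrams: $\hat v_2$ is a signed count over pairs of chords that are interleaved and satisfy the ``alternating'' condition cutting out $I'$, each weighted by $\epsilon(j_1,j_3)\epsilon(j_2,j_4)$; the structural fact driving everything is that whether two chords are interleaved, and whether they form an alternating pair, is determined by the cyclic order of their four endpoints along the diagram and by the over/under labels at them, not by the metric geometry of the curve or the projection.

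For Step~1 I would verify invariance under the three oriented Reidemeister moves in the knotoid setting (no move drags a strand over an endpoint of the underlying arc). A type~I move inserts a chord interleaved with no other chord, so it changes no summand. A type~II move inserts two chords of opposite sign whose four endpoints bracket the same sub‑arc; they are interleaved with exactly the same family of other chords, so those contributions cancel in sign‑reversed pairs, while the single pair the two new chords form with each other either fails the alternating test or contributes $\epsilon(-\epsilon)$ against a matching multiplicity. A type~III move permutes three chords while fixing the position of each of their endpoints relative to every other chord's endpoints, so only the mutual interleaving/alternating pattern among the triple (and their three unchanged signs) can be affected; a finite case analysis — the Polyak–Viro bookkeeping, carried out in the based/knotoid setting — shows the three affected summands rearrange to the same total. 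I expect this type~III check to be the main obstacle, exactly as it is for the classical Gauss‑diagram formula for $v_2$.

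For Step~2 I resolve one crossing $c$. A crossing switch at $c$ does not change which chords are interleaved or alternating and only flips $\epsilon(c)$, so the first difference $\hat v_2(K_+)-\hat v_2(K_-)$ is a signed count of the crossings paired with $c$ in $I'$. Those crossings are governed by the oriented smoothing $K_0$ of $c$: the chords interleaved with $c$ correspond to the inter‑component crossings of the two‑component link $K_0$, so the first difference is a fixed rational multiple of $\mathrm{lk}(K_0)$, hence a finite‑type invariant of order $\le 1$ in the remaining crossings. Resolving a second crossing then shows the alternating sum of $\hat v_2$ over the four resolutions of any two double points depends only on the combinatorial type of the double‑point pattern, and over three double points it vanishes, so $\hat v_2$ has order $\le 2$; exhibiting a two‑double‑point singular knotoid on which $\hat v_2\ne 0$ (equivalently a diagram with a single alternating interleaved pair of equal‑signed crossings, e.g.\ the standard trefoil diagram) shows the order is exactly $2$. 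As a sanity check, restricted to knots $\hat v_2$ should coincide, up to normalization, with the Polyak–Viro Gauss‑diagram formula for the Casson invariant, which is why the preceding Remark applies.

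For Step~3, Step~2 yields $\hat v_2(K_+)-\hat v_2(K_-)=c_0\,\mathrm{lk}(K_0)$ for a universal constant $c_0$ read off the normalization, while Theorem~\ref{vaskeinknots} gives $v_2(K_+)-v_2(K_-)=-6\,\mathrm{lk}(K_0)$ and $v_1\equiv 0$ on knots; hence $v_2+\tfrac{6}{c_0}\hat v_2$ is unchanged by every crossing switch between knots and therefore constant on the set of knots. Its value is computed on the unknot: the crossingless diagram gives $\hat v_2(\bigcirc)=0$, and substituting $q=e^{-x/2}$ into $J_{\bigcirc}(q)=q+q^{-1}=2\cosh(x/2)=2+\tfrac14 x^2+\cdots$ gives $v_2(\bigcirc)=\tfrac14$. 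Matching the two then forces $c_0=-1$ and the additive constant $\tfrac14$, i.e.\ $v_2=\tfrac14+6\hat v_2$ on knots; the identical comparison with the trivial knotoid in place of $\bigcirc$ (whose enhanced bracket is again $q+q^{-1}$, so $v_2=\tfrac14$ and $\hat v_2=0$) extends the relation to knot‑type knotoids, while Steps~1--2 already supply the knotoid‑invariance and order‑$2$ assertions in full generality.
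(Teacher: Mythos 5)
Your outline is sound but follows a genuinely different route from the paper. The paper (following the proof of Theorem 1.A of Polyak--Viro) never proves Reidemeister invariance of $\hat v_2$ directly: it starts from the crossing-change identity $v_2(K_+)-v_2(K_-)=-6\,lk(K_0)$ of Theorem \ref{vaskeinknots}, unknots the diagram by making it descending, expresses the accumulated linking numbers $\sum\epsilon(s)lk(L_s)$ as a count of Gauss subdiagrams of one arrow type, repeats the argument with an ascending diagram to get the other arrow type, and averages the two to obtain $v_2=\frac14+6\hat v_2$; invariance of $\hat v_2$ then comes for free because $v_2$ is already an invariant. You instead prove invariance of the Gauss-diagram count first (Polyak--Viro's other, ``direct'' method), establish finite type, and then fix the affine relation by matching crossing-change formulas and evaluating on the unknot. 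Your route is more self-contained in that it does not need the descending/ascending bookkeeping, but it pushes all the real work into the Reidemeister III case analysis, which you only gesture at; the paper's route avoids R3 entirely at the cost of the careful identification of $lk(L_s)$ with chords crossing $c(s)$ in the two directions.

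Two points need tightening. First, the R3 verification for the symmetrized, based count over $I'$ is the entire content of Step 1 and is not carried out; as written, Step 1 is a plan, not a proof. Second, in Step 3 you cannot ``force'' $c_0=-1$ by matching against $v_2$ -- that presupposes the relation you are trying to prove. The constant must be computed from the Gauss-diagram formula itself: switching $c$ both flips $\epsilon(c)$ and reverses its arrow, so the alternating partners of $c$ before and after the switch are complementary subsets of the chords interleaved with $c$, and the difference $\hat v_2(K_+)-\hat v_2(K_-)$ becomes $\tfrac12$ times the signed count of \emph{all} inter-component crossings of $K_0$, i.e.\ $\pm lk(K_0)$ by the standard one-directional-count identity for the linking number. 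Your Step 2 contains this idea implicitly, but you should make it explicit (and reconcile the resulting sign with the $-6\,lk(K_0)$ of Theorem \ref{vaskeinknots}) before invoking it in Step 3.
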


\begin{proof}
The proof is similar to Theorem 1.A in \cite{Polyak2001}. %, with a factor $-6$ difference.

%Let us denote $\hat{v}_2(K)=\sum_{j_1>j_2>j_3>j_4\in I_{\vec{\xi}}\prime}\epsilon(j_1,j_3)\epsilon(j_2,j_4)$

%Let us denote $\alpha_2$ a quantity defined on a knot diagram which satisfies the relation 

%\begin{equation}\label{skeinConway}
%\alpha_2(\raisebox{-10pt}{\includegraphics[width=.05\linewidth]{pluscross.png}})-\alpha_2(\raisebox{-10pt}{\includegraphics[width=.05\linewidth]{negativecross.png}})=lk(\raisebox{-10pt}{\includegraphics[width=.05\linewidth]{ncross.png}})
%\end{equation}

%Then one can show that $\alpha_2$ is an invariant of knotoids by showing that it is invariant under Reidemeister moves of knotoids (work not shown here).

%As a finite type invariant of order two for knotoids, this must be equal to the second Vassiliev invariant of knotoids that we defined earlier, thus $v_2(K)=\alpha_2(K)$.

%We will show that $v_2=-6\hat{v}_{2}$. 

Notice that in terms of Gauss diagrams of knots, 

\begin{equation}
    \hat{v}_{2}(K)=\frac{1}{2}\langle\raisebox{-15pt}{\includegraphics[width=.1\linewidth]{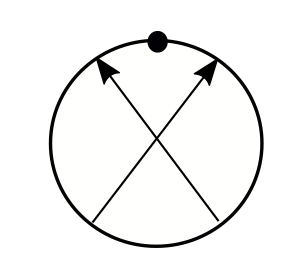}}+\raisebox{-15pt}{\includegraphics[width=.1\linewidth]{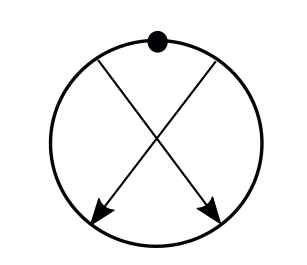}},K\rangle.
\end{equation}

To calculate $v_2$ of the knot $K$, we transform $K$ to
a descending knot diagram, going from the base point along the orientation of $K$ and replacing
an undercrossing by an overcrossing, if at the first passage through the point we
go along the undercrossing. When we pass over the whole diagram, it becomes
descending. Each time we change a crossing $s$, the value of $v_2$ changes by $(-6)(-\epsilon(s))lk(\raisebox{-10pt}{\includegraphics[width=.05\linewidth]{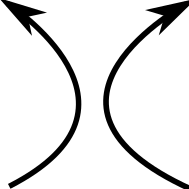}})$ where $\epsilon(s)$ is the sign of the crossing. Since
$v_2(descending)=\frac{1}{4}$, it gives

\begin{equation}\label{ucross}
    v_2(K)=\frac{1}{4}+6\sum_d\epsilon(s)lk(L_s),
\end{equation}

\noindent where $L_s$ runs over links which appeared as smoothings at points where the crossing changed to make $K$ a descending knot diagram.

To calculate $lk(L_s)$, we can sum up the signs of all the crossing points of $L_s$ in
which the component containing the base point goes below the other component.
These points correspond to chords of $G$ intersecting the chord $c(s)$ corresponding
to $s$ and directed to the side of $c(s)$ containing the base point. At the moment
all arrows of the original diagram G with heads between the base point and the
head of $c(s)$ have been inverted. Therefore $lk(L_s)$ is equal to the sum of signs of
arrows crossing $c(s)$ and having heads between tail of $c(s)$ and the base point. In
other words, $lk(L_s)$ is $\sum\epsilon(c_2)$ where the summation runs over all chords involved,
together with $c(s)$, into subdiagrams of the type $\raisebox{-10pt}{\includegraphics[width=.05\linewidth]{v2knots.png}}$.

On the other hand, we can obtain the same result by transforming $K$ to an ascending knot diagram, replacing an overcrossing by an undercrossing, if at first passage through the point we go along an overcrossing. Then again we obtain:

\begin{equation}\label{ucross2}
    v_2(K)=\frac{1}{4}+6\sum_a\epsilon(s)lk(L_s),
\end{equation}

\noindent where $L_s$ runs over links which appeared as smoothings at points where the crossing changed to make $K$ and ascending knot diagram.

To calculate $lk(L_s)$ in this case we can sum up all the crossing points of $L_s$ in which the component containing the base point goes over the other component. These points correspond to chords of $G$ intersecting the chord $c(s)$ corresponding to $s$ and directed to the side of $c(s)$ that does not contain the base point. At the moment all arrows of the original diagram $G$ with heads between the base point and the tail of $c(s)$ have been inverted. Therefore $lk(L_s)$ is equal to the sum of signs of arrows crossing $c(s)$ and having tails between the base point and the head of $c(s)$. In other words, $lk(L_s)$ is $\sum\epsilon(c(s))$, where the summation runs over all chords involved together with $c(s)$, into subdiagrams of the type $\raisebox{-10pt}{\includegraphics[width=.05\linewidth]{v2knots2.png}}$.

Thus, by Eq. \ref{ucross} and Eq. \ref{ucross2},

\begin{equation}\label{ucross3}
\begin{split}
    v_2(K)&=\frac{1}{2}(\frac{1}{4}+6\sum_a\epsilon(s)lk(L_s)+\frac{1}{4}+6\sum_d\epsilon(s)lk(L_s)\\
    &=\frac{1}{4}+6\hat{v}_2
    %&=\frac{1}{4}+3(\sum_a\epsilon(s)lk(L_s)+\sum_d\epsilon(s)lk(L_s))
    \end{split}
\end{equation}

%\noindent where 

\end{proof}

\section{The second Vassiliev invariant of knotoids}\label{knotoids2}

Here we study the second Vassiliev invariant 
of the enhanced Jones polynomial of knotoids.

\begin{definition}
We will define a separated linkoid diagram of two components, a linkoid diagram where one of the two components is all above or all below the other one.
\end{definition}

%\begin{definition}
%Let $L$ denote a linkoid with two components. We define the linking number of $L$ to be equal to half the algebraic sum of intercrossings in $L$.
%\end{definition}

%\begin{remark}
%The linking number of linkoids is a topological invariant of linkoids. In the case of link-type linkoids is an integer, otherwise it may be a rational number, not necessarily an integer.
%\end{remark}

\begin{theorem}\label{vaskeinknotoids}
Let $K$ (resp. $L$) be a knotoid (resp. linkoid). Let $K_+,K_-,K_0$ be derived by changing and smoothing a positive crossing of $K$. Let $r$ be the algebraic sum of crossings needed to convert $K$ to an ascending knotoid diagram and let $l$ denote the algebraic sum of crossings needed to convert $K_0$ to a separated linkoid. Let $v_k(K_0^s)$ denote the $k$th Vassiliev invariant of the separated linkoid diagram obtained by $K_0$. The first three Vassiliev invariants defined by the coefficients of the enhanced Jones polynomial satisfy the equations

\begin{equation}
\begin{split}
&v_0(K)=v_0(\raisebox{-1pt}{\includegraphics[width=.05\linewidth]{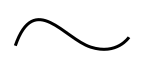}})=2\\
&v_0(L)=v_0(L^{s})\\
& v_1(K_+)-v_1(K_-)=rv_0(K_0^s)-4r\\
&    v_2(K_{+})-v_2(K_{-})=v_1(K_0^s)-(2l+2r+1)v_0(K_0^s)+2l+8r+4
    \end{split}
\end{equation}

%\noindent where $r$ is the algebraic sum of crossings needed to convert $K$ to an ascending knotoid diagram and $lk$ is the linking number of the linkoid $K_0$.

\end{theorem}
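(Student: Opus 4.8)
The plan is to imitate the proof of Theorem~\ref{vaskeinknots}, now for knotoids and linkoids, while carefully bookkeeping the fact that, unlike for knots, $v_1$ and $v_2$ of a knotoid are not invariant under crossing changes; the integers $r$ and $l$ record exactly the cost of reducing $K$ and $K_0$ to standard form. I would start from the skein relation Eq.~\ref{Jskein} of the enhanced Jones polynomial, which is a purely local relation at a crossing and therefore holds for knotoid and linkoid diagrams verbatim. Substituting $q=e^{-x/2}$, expanding both sides as power series in $x$ exactly as in the proof of Theorem~\ref{vaskeinknots}, and equating the coefficients of $x^0$, $x^1$, $x^2$ gives the three local identities $v_0(K_+)=v_0(K_-)$, $v_1(K_+)-v_1(K_-)=v_0(K_0)-v_0(K_+)-v_0(K_-)$, and $v_2(K_+)-v_2(K_-)=v_1(K_0)-v_1(K_+)-v_1(K_-)$ (the last after using the first to cancel the $v_0$ terms), valid for any skein triple of knotoid/linkoid diagrams.

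For the $v_0$ statements, the first local identity says $v_0$ is unchanged by a crossing switch. Any knotoid diagram can be turned into an ascending diagram by crossing switches, and an ascending knotoid diagram represents the trivial knotoid, whose only enhanced states are the underlying single arc carrying a $+$ or a $-$ label; hence its enhanced Jones polynomial is $q+q^{-1}$ and $v_0=2$, giving $v_0(K)=2$. Likewise, switching the inter-component crossings of a linkoid produces a separated linkoid $L^{s}$, and since $v_0$ is invariant under every switch, $v_0(L)=v_0(L^{s})$.

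The heart of the argument is the $v_1$ and $v_2$ statements. For $v_1$, I would reduce $K$ to an ascending knotoid diagram by a sequence of switches: by the second local identity, using $v_0(K_\pm)=2$ and replacing $v_0$ of each oriented smoothing $L_s$ encountered by the $v_0$ of the separated linkoid it reduces to, each switch of a self-crossing $s$ contributes $\pm(v_0(L_s^{s})-4)$; since every two-component separated linkoid is split with both components of $v_0$ equal to $2$, these values coincide, the sum collapses, and using that the terminal ascending knotoid has $v_1=0$ one obtains the stated formula for $v_1(K_+)-v_1(K_-)$ in terms of the algebraic count $r$. For $v_2$, I would feed the third local identity with the $v_1$-formula just obtained for $v_1(K_+)$ and $v_1(K_-)$ (which differ only by the single switch $K_+\to K_-$, whose smoothing is $K_0$), and with an evaluation of $v_1(K_0)$ obtained by reducing the two-component linkoid $K_0$ to its separated form $K_0^{s}$: there each inter-component switch changes $v_1$ by $\pm(v_0(\text{the knotoid smoothing})-v_0(K_0^{s}))=\pm(2-v_0(K_0^{s}))$, which sums into the algebraic count $l$, plus the invariant $v_1(K_0^{s})$ of the standard form. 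Regrouping the $v_0(K_0^{s})$-terms and collecting the additive constants — these coming from $v_1$ and $v_2$ of the trivial knotoid (namely $0$ and $\frac{1}{4}$) and from $v_0$ of separated linkoids via multiplicativity of the enhanced bracket under disjoint union — produces the coefficients $2l+2r+1$ and $2l+8r+4$.

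The main obstacle is exactly this last step. One must pin down, for each switch in the ascending reduction of $K$ and in the separating reduction of $K_0$, which smoothing appears and with which sign, and then argue that the accumulated corrections collapse to expressions in $r$, $l$ and the invariants of the standard forms. This is the knotoid analog of the base-point and arrow-inversion argument used in the proof of Theorem~\ref{knotv2} and in \cite{Polyak2001}, now complicated by the two knotoid endpoints: ``a descending knot diagram is the unknot'' is replaced by ``an ascending knotoid diagram is the trivial knotoid'', and the oriented smoothings split off closed components. Getting the additive constants exactly right — the $+1$ inside $2l+2r+1$ and the $+4$ in $2l+8r+4$ — is the part most prone to sign errors, and it relies on the small explicit evaluations $J_{\text{trivial knotoid}}=q+q^{-1}$ and of the enhanced Jones polynomial of a split knotoid-plus-circle diagram.
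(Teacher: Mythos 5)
Your proposal follows essentially the same route as the paper: expand the skein relation of the enhanced Jones polynomial in $x$, equate the coefficients of $x^0,x^1,x^2$ to obtain the three local identities, then iterate crossing switches to reach an ascending knotoid (for $K$) and a separated linkoid (for $K_0$), collecting the algebraic counts $r$ and $l$ and substituting back to obtain the $v_2$ formula. One caveat: your side-justification that ``every two-component separated linkoid is split with both components of $v_0$ equal to $2$'' is false in the knotoid setting --- the endpoints obstruct the planar isotopy that would separate the components, which is exactly why the theorem retains $v_0(K_0^s)$ as an unevaluated quantity rather than replacing it by $4$ (that replacement is made only for knot-type knotoids, in Proposition \ref{vaskeinknotoidstype}); taken literally, your claim would collapse $v_1(K_+)-v_1(K_-)$ to $0$. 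The step it was meant to justify --- that every switch in the reduction contributes the same $v_0(K_0^s)-4$ --- is asserted without further argument in the paper as well, so apart from this your argument tracks the paper's.
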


\begin{proof}
We follow the same steps as in the proof of Theorem \ref{vaskeinknots}. Namely, by expanding the enhanced Jones polynomial of the knotoid $K$ in the skein relation of the enhanced Jones polynomial and equating the coefficients of $x^0$ we get the following relation for knotoids:

\begin{equation}\label{eq2}
\begin{split}
&0=\sum_{s_+\in S_+}(-1)^{i(s_+)+n_{-}^{(+)}}-\sum_{s_-\in S_-}(-1)^{i(s_-)+n_{-}^{(-)}}\Leftrightarrow \sum_{s_+\in S_+}(-1)^{i(s_+)+n_{-}^{(+)}}=\sum_{s_-\in S_-}(-1)^{i(s_-)+n_{-}^{(-)}}\\
    &\Leftrightarrow v_0(K_+)=v_0(K_-)
    \end{split}
\end{equation}

Thus, $v_0$ is invariant upon a crossing change. So, if $K$ is a knotoid, then we can change crossings so that we obtain an ascending diagram of a knotoid, which is the trivial knotoid \cite{Manouras2020}. Thus

\begin{equation}\label{eq20}
    v_0(K)=v_0(\bigcirc)=2
\end{equation}

Similarly, if $L$ is a linkoid of two components, we can change crossings in order to get a separated linkoid of two components:

\begin{equation}\label{eq2}
    v_0(L)=v_0(L^{s})
\end{equation}

%Eq. \ref{eqrl}
By equating the coefficients of $x$ in the expansion of the enhanced Jones polynomial of knotoids, we obtain the following relation

\begin{equation}\label{Eq47}
\begin{split} v_1(K_+)-v_1(K_-)=v_0(K_0)-v_0(K_+)-v_0(K_-)
\end{split}
\end{equation}

If $K_+$ is a knotoid and $K_0$ is a linkoid of two components, then Eq. \ref{Eq47} and Eq. \ref{eq20} give

\begin{equation}
    \begin{split}
        v_1(K_+)-v_1(K_-)=v_0(K_0^{s})-4,
    \end{split}
\end{equation}

\noindent from which we can get the relations 

\begin{equation}
    \begin{split}
        &v_1(K_+)=v_1(K_-)+v_0(K_0^{s})-4\\
        &v_1(K_-)=v_1(K_+)-v_0(K_0^{s})+4\\
    \end{split}
\end{equation}

Thus, by repeatedly changing crossings in order to make $K_+$ into an ascending knotoid diagram (for which $v_1=0$), we get 

\begin{equation}
    \begin{split}
        v_1(K_+)=rv_0(K_0^{s})-4r,
    \end{split}
\end{equation}

\noindent where $r$ is the algebraic sum of the signs of the crossings that need to be changed in order to make the knotoid $K_+$ into an ascending knotoid diagram.
%Thus, in that case

%\begin{equation}
%    v_1(K_-)=v_1(K_+)-v_0(K_0^{asc})+4
%\end{equation}

%We can continue making crossing changes to get to an ascending knotoid to obtain

%\begin{equation}
%    v_1(K_-)=2-crv_0(K_0^{asc})+4cr
%\end{equation}

%\noindent where $cr$ is the algebraic number of signs changed in order to obtain an ascending knotoid.

If $L_+$ is a linkoid, then $L_-$ is also a linkoid and $L_0$ is a knotoid. Then Eq. \ref{Eq47} gives 

\begin{equation}
    \begin{split}
        v_1(L_+)-v_1(L_-)&=v_0(L_0)-v_0(L_+)-v_0(L_-)=2-2v_0(L^{s}),
    \end{split}
\end{equation}

\noindent from which we get the following relations

\begin{equation}
    \begin{split}
        &v_1(L_+)=v_1(L_-)+2-2v_0(L^{s})\\
        &v_1(L_-)=v_1(L_+)-2+2v_0(L^{s})\\
    \end{split}
\end{equation}

We can convert $L_+$ to a separated linkoid by repeatedly changing crossings. Suppose that the algebraic sum of crossings we need to change in order to get a separated linkoid diagram, $L^{s}$, is $l$. Then we get

\begin{equation}
    \begin{split}
        v_1(L_+)&=v_1(L^{s})+2l-2lv_0(L^{s}).
    \end{split}
\end{equation}

%Given a link $K_+$, suppose that we can go to an ascending link by changing $\lambda$ crossings. Then $\lambda=lk$ and

%\begin{equation}\label{lkn}
%    \begin{split}
%        v_1(K_+)&=v_1^{asc}+2lk-2lkv_0^{asc}\\
%        %&=2lk-(2lk-1)v_0^{asc}
%    \end{split}
%\end{equation}

%Eq. \ref{eqrl}
By equating the coefficients of $x^2$ in the expansion of the enhanced Jones polynomial of knotoids, we get

\begin{equation}
    \begin{split}
     v_2(K_+)-v_2(K_-)=v_1(K_0)-v_1(K_+)-v_1(K_-)\\
        %&\Leftrightarrow v_2(K_+)-v_2(K_-)=v_1(K_0)+2v_1(K_+)-20+\sum_{s_-\in S_-}(-1)^{i(s_-)+n_{-}^{(-)}}(j(s_-)+n_{+}^{(-)}-2n_{-}^{(-)})\\
    \end{split}
\end{equation}

Thus, if $K_+$ is a knotoid,

\begin{equation}
    \begin{split}
    v_2(K_+)-v_2(K_-)&=v_1(K_0^{s})+2l-2lv_0(K_0^{s})-[v_1(K_-)+v_0(K_0^{s})-4]-v_1(K_-)\\
    &=v_1(K_0^{s})+2l-2lv_0(K_0^{s})-v_0(K_0^{s})+4-2v_1(K_-)\\
    &=v_1(K_0^{s})-(2l+1)v_0(K_0^{s})+2l+4-2(rv_0(K_0^{s})-4r\\
    &=v_1(K_0^{s})-(2l+2r+1)v_0(K_0^{s})+2l+8r+4\\
    \end{split}
\end{equation}

\end{proof}

\begin{definition}
Let $L$ define a linkoid of two components. We define the linking number of $L$, we denote $lk(L)$, to be half the algebraic sum of inter-crossings in a diagram of $L$, ie. $lk(L)=\frac{1}{2}\sum_{c\in D}sign(c)$, where $D$ is the set of crossings between the two components in the diagram.
\end{definition}

\begin{remark}
Note that the linking number of linkoids is an invariant of linkoids. The linking number of knotoids is not an integer in general, but for a link-type linkoid, $lk$ is an integer.
\end{remark}

\begin{proposition}\label{vaskeinknotoidstype}
Let $K$, resp. $L$, denote a knot-type knotoid, a link-type linkoid, resp. The first three Vassiliev invariants defined by the coefficients of the enhanced Jones polynomial satisfy the equations

\begin{equation}
\begin{split}
&v_0(K)=v_0(\raisebox{-1pt}{\includegraphics[width=.05\linewidth]{trivial.png}})=2\\
&v_0(L)=v_0(\raisebox{-1pt}{\includegraphics[width=.05\linewidth]{trivial.png}}\raisebox{-1pt}{\includegraphics[width=.05\linewidth]{trivial.png}})=4\\
& v_1(K_+)=v_1(K_-)\\
&    v_2(K_{+})-v_2(K_{-})=-6lk
    \end{split}
\end{equation}

\noindent where $lk$ is the linking number of the linkoid $K_0$.

\end{proposition}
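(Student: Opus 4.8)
The plan is to obtain the statement as a specialization of Theorem~\ref{vaskeinknotoids}: in the knot-type / link-type setting the separated linkoid $K_0^s$ and the ascending knotoid that occur there become, as far as $v_0$ and $v_1$ are concerned, trivial, so the four displayed identities fall out of the four lines of Theorem~\ref{vaskeinknotoids} after this simplification.

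\textbf{Steps.} First, $v_0(K)=2$ for any knotoid is literally the first line of Theorem~\ref{vaskeinknotoids}. Next, for a two-component linkoid $L$, Theorem~\ref{vaskeinknotoids} gives $v_0(L)=v_0(L^s)$; a separated linkoid isotopes (by Reidemeister~II moves lifting the upper component clear of the lower one) to a split diagram $A'\sqcup B'$, and with the paper's normalization the enhanced Jones polynomial is multiplicative on a split union at least one of whose pieces is open (the sign carried by the long arc is exactly what produces the extra factor $q+q^{-1}$), so $J_{L^s}=J_{A'}J_{B'}$ and $v_0(L)=v_0(A')v_0(B')=2\cdot2=4$; equivalently one computes the enhanced bracket of the trivial two-component linkoid to be $(q+q^{-1})^2$, which is $4$ at $x=0$. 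For the third identity, when $K$ is knot-type the oriented smoothing of a self-crossing gives a link-type two-component linkoid $K_0$, hence $v_0(K_0^s)=4$, and the third line of Theorem~\ref{vaskeinknotoids}, $v_1(K_+)-v_1(K_-)=rv_0(K_0^s)-4r$, becomes $4r-4r=0$. The same argument — every two-component linkoid has $v_0=4$, since crossing changes preserve $v_0$ and reduce each component to a trivial one, after which the separate-and-split step applies — shows $v_1\equiv 0$ on \emph{all} knotoids.

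For the last identity substitute $v_0(K_0^s)=4$ into the fourth line of Theorem~\ref{vaskeinknotoids}:
\[
v_2(K_+)-v_2(K_-)=v_1(K_0^s)-(2l+2r+1)\cdot 4+2l+8r+4=v_1(K_0^s)-6l,
\]
so it remains to check (i) $v_1(K_0^s)=0$ and (ii) $l=lk(K_0)$. For (i), write $K_0^s$ as a split union $A'\sqcup B'$ with $A'$ a knotoid and $B'$ a knot; multiplicativity gives $v_1(K_0^s)=v_0(A')v_1(B')+v_1(A')v_0(B')$, and $v_1(A')=0$ (a knotoid, by the previous step) while $v_1(B')=0$ (a knot, by Theorem~\ref{vaskeinknots}). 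For (ii), $l$ is the algebraic sum of signs of the inter-component crossings switched to separate $K_0$; switching an inter-crossing of sign $\sigma$ changes the linking number by $-\sigma$, and $lk$ of a separated linkoid is $0$ (it isotopes to a split diagram and $lk$ is a linkoid invariant), so $0=lk(K_0^s)=lk(K_0)-l$. Hence $v_2(K_+)-v_2(K_-)=-6l=-6\,lk(K_0)$.

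\textbf{Main obstacle.} The delicate points are the multiplicativity of the enhanced Jones polynomial under split union in this normalization — needed for both $v_0(L)=4$ and $v_1(K_0^s)=0$, and depending on the convention that the long arc in a state carries a sign — and the sign bookkeeping in (ii), i.e.\ matching the conventions for the algebraic counts $l,r$ in Theorem~\ref{vaskeinknotoids} with the sign convention for $lk$ of a linkoid so that $l=lk(K_0)$ holds exactly rather than up to sign. Everything else is a direct substitution into Theorem~\ref{vaskeinknotoids}.
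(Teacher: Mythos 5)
Your proposal is correct and follows essentially the same route as the paper: both derive the identities by specializing Theorem \ref{vaskeinknotoids}, using that for a knot-type knotoid the smoothing $K_0$ is a link-type linkoid whose separated form satisfies $v_0(K_0^s)=4$ and $v_1(K_0^s)=0$, and that $l=lk(K_0)$. You supply more justification than the paper (multiplicativity of the enhanced Jones polynomial on split unions, and the sign bookkeeping showing $l=lk$), whereas the paper simply asserts these facts and refers back to the method of Theorem \ref{vaskeinknotoids}, but the argument is the same.
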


\begin{proof}
If $L^s$ is a separated link-type linkoid, it is the trivial linkoid, thus $v_0(L^s)=4$. The algebraic sum of crossings needed to change in order to get a separated linkoid diagram from any link-type linkoid diagram is $l=lk$, where $lk$ denotes the linking number of link-type linkoid. When $K$ is a knot-type knotoid, then $K_0$ is a link-type linkoid. Using these facts, the results follow using the same method as in the proof of Theorem \ref{vaskeinknotoids}.

%Thus, $v_0(K_0)=4$.  Thus, 

%\begin{equation}
%v_1(K_+)-v_1(K_-)=rv_0^{s}-4r=4r-4r=0
%\end{equation}

%Thus, $v_1(K_+)=v_1(K_-)=v_1(K^{s})$, so we get

%\begin{equation}
%v_2(K_+)-v_2(K_{-})=-6l
%\end{equation}

%\noindent where $l=lk$, since a separated link-type %linkoid is a trivial linkoid.

\end{proof}

\begin{theorem}\label{knotoid2}
Let $K$ denote a knot-type knotoid. Let $\hat{v}_2$ be defined as follows:

\begin{equation}
    \hat{v}_{2}(K)=\frac{1}{2}\sum_{j_1>j_2>j_3>j_4\in I_{\vec{\xi}}\prime}\epsilon(j_1,j_3)\epsilon(j_2,j_4),
\end{equation}

\noindent where $I\prime$ denotes the set of pairs of alternating crossings in the diagram of the knotoid $K$.

$\hat{v}_2$ is a second Vassiliev invariant of knot-type knotoids, and $v_2=\frac{1}{4}+6\hat{v}_2$, where $v_2$ denotes the second Vassiliev invariant from the enhanced Jones polynomial. 
\end{theorem}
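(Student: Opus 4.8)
The plan is to run the argument of Theorem~\ref{knotv2} almost verbatim, with the knot skein relations replaced by the knot-type knotoid relations of Proposition~\ref{vaskeinknotoidstype}. Observe first that, since the enhanced Jones polynomial is an invariant of knotoids, so is its coefficient $v_2$, and the finite-type argument of Section~\ref{Vassiliev} applies in the knotoid setting as well; hence once the identity $v_2(K)=\tfrac14+6\hat v_2(K)$ is established, $\hat v_2$ is automatically a second Vassiliev invariant of knot-type knotoids, and it remains only to prove the formula.

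First I would settle the base case. For a knot-type knotoid $K$, choosing the starting endpoint and the orientation, one switches crossings to turn its diagram into a descending knotoid diagram, which represents the trivial knotoid (as in the proof of Theorem~\ref{vaskeinknotoids}, citing \cite{Manouras2020}). Applying the enhanced-states formula of Eq.~\ref{defnknotoid} to the crossingless diagram of the trivial knotoid — a single arc, summed over its two sign assignments, with $n_\pm=0$, $i=0$, $j=\pm1$ — gives $v_2(\text{trivial knotoid})=\tfrac18(1+1)=\tfrac14$.

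Next, the telescoping step. By Proposition~\ref{vaskeinknotoidstype}, switching a crossing $s$ of a knot-type knotoid changes $v_2$ by $6\,\epsilon(s)\,lk(L_s)$, where $L_s$ is the oriented smoothing of $s$ in the current diagram; since $K$ is knot-type, $L_s$ is a link-type linkoid, so $lk(L_s)\in\mathbb{Z}$ is the honest integer linking number of the definition above. Summing over the crossings switched in the descending procedure yields
\[
v_2(K)=\tfrac14+6\sum_{s\in d}\epsilon(s)\,lk(L_s),
\]
and symmetrically, running the ascending procedure,
\[
v_2(K)=\tfrac14+6\sum_{s\in a}\epsilon(s)\,lk(L_s).
\]
Now I would carry out the Gauss-diagram bookkeeping exactly as in Theorem~\ref{knotv2} (following \cite{Polyak2001}): for a knotoid the Gauss diagram is an interval with the two endpoints marked, and the starting endpoint plays the role of the base point; $lk(L_s)$ is rewritten as the signed count of arrows crossing the chord $c(s)$ with heads (resp.\ tails) lying between the tail (resp.\ head) of $c(s)$ and the base point, after accounting for the arrows already inverted at that stage. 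This identifies $\sum_{s\in d}\epsilon(s)\,lk(L_s)$ with the count of one of the two alternating Gauss subdiagrams in $K$ and $\sum_{s\in a}\epsilon(s)\,lk(L_s)$ with the count of the other; averaging the two displays gives $v_2(K)=\tfrac14+6\hat v_2(K)$, where $\hat v_2$ is half the sum of the two subdiagram counts, i.e.\ the sum over interleaved (alternating) pairs of crossings $j_1>j_2>j_3>j_4$ weighted by $\epsilon(j_1,j_3)\epsilon(j_2,j_4)$.

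The main obstacle is the bookkeeping of the last paragraph in the knotoid setting: one must verify that the descending/ascending procedure for a knot-type knotoid terminates at the trivial knotoid with every intermediate smoothing $L_s$ a link-type linkoid (so that the integer $lk$ of Proposition~\ref{vaskeinknotoidstype} applies), and that the chord-intersection formula for $lk(L_s)$ remains correct when the base point is one endpoint of an open Gauss diagram rather than a point on a circle, in particular that the interleaving condition on the subdiagrams is exactly the ``pair of alternating crossings'' condition defining $I'$. Once these endpoint considerations are pinned down, the combinatorics is identical to the knot case treated in Theorem~\ref{knotv2}, which in turn follows \cite{Polyak2001}.
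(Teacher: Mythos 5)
Your proposal is correct and follows essentially the same route as the paper's proof: transform the knot-type knotoid to a descending (resp.\ ascending) diagram, which is the trivial knotoid with $v_2=\tfrac14$, telescope the crossing-change formula $v_2(K_+)-v_2(K_-)=-6\,lk(K_0)$ from Proposition~\ref{vaskeinknotoidstype}, rewrite each $lk(L_s)$ as a signed count of interleaved chords in the Gauss diagram à la Polyak--Viro, and average the two procedures to obtain $v_2=\tfrac14+6\hat v_2$. The endpoint bookkeeping you flag as the remaining obstacle is handled in the paper exactly as you anticipate, with the knotoid endpoints playing the role of the base point in the open Gauss diagram.
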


\begin{proof}
The proof follows a similar approach as the proof of Theorem \ref{knotv2}.

%Let us denote $\\hat{v}_{2}(K)=\frac{1}{2}\sum_{j_1>j_2>j_3>j_4\in I_{\vec{\xi}}\prime}\epsilon(j_1,j_3)\epsilon(j_2,j_4)$

%Let us denote $\alpha_2$ a quantity defined on a knotoid diagram which satisfies the relation 

%\begin{equation}\label{skeinConway}
%\alpha_2(\raisebox{-10pt}{\includegraphics[width=.05\linewidth]{pluscross.png}})-\alpha_2(\raisebox{-10pt}{\includegraphics[width=.05\linewidth]{negativecross.png}})=lk(\raisebox{-10pt}{\includegraphics[width=.05\linewidth]{ncross.png}})
%\end{equation}

%Then one can show that $\alpha_2$ is an invariant of knotoids by showing that it is invariant under Reidemeister moves of knotoids (work not shown here).

%As a finite type invariant of order two for knotoids, this must be equal to the second Vassiliev invariant of knotoids that we defined earlier, thus $v_2(K)=\alpha_2(K)$.

%We will show that $v_2=-6a_{2}$. 

Notice that in terms of Gauss diagrams of knotoids, 

\begin{equation}
    \hat{v}_{2}(K)=\langle\raisebox{-15pt}{\includegraphics[width=.1\linewidth]{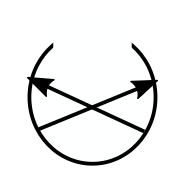}}+\raisebox{-15pt}{\includegraphics[width=.1\linewidth]{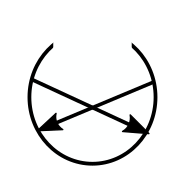}},K\rangle.
\end{equation}

To calculate $v_2$ of the knotoid $K$, we transform $K$ to
a descending knotoid, going from the starting point along the orientation of $K$ and replacing
an undercrossing by an overcrossing, if at the first passage through the point we
go along the undercrossing. When we arrive at the endpoint of the diagram, it becomes
descending. Each time we change a crossing $s$, the value of $v_2$ changes by $(-6)(-\epsilon(s))lk(\raisebox{-10pt}{\includegraphics[width=.05\linewidth]{k0.png}})$ where $\epsilon(s)$ is the sign of the crossing. Since
$v_2(descending)=\frac{1}{4}$, it gives

\begin{equation}\label{uknotoid}
    v_2(K)=\frac{1}{4}+6\sum\epsilon(s)lk(L_s),
\end{equation}

\noindent where $L_s$ runs over linkoids which appeared as smoothings at points where the crossing changed.

To calculate $lk(L_s)$, we can sum up the signs of all the crossing points of $L_s$ in
which the component containing the endpoints goes below the other component.
These points correspond to chords of $G$ intersecting the chord $c(s)$ corresponding
to $s$ and directed to the side of $c(s)$ containing the endpoints. At the moment
all arrows of the original diagram G with heads between the starting point and the
head of $c(s)$ have been inverted. Therefore $lk(L_s)$ is equal to the sum of signs of
arrows crossing $c(s)$ and having heads between tail of $c(s)$ and the endpoints. In
other words, $lk(L_s)$ is $\sum\epsilon(c_2)$ where the summation runs over all chords involved,
together with $c(s)$, into subdiagrams of the type $\raisebox{-10pt}{\includegraphics[width=.05\linewidth]{gausscode.png}}$.

We can also calculate $v_2$ by transforming $K$ to an ascending knotoid. By a similar argument we show that $v_2$ gets an expression similar to Eq. \ref{uknotoid}, where  for a crossing change at a chord $c(s)$, $lk(L_s)$ is $\sum\epsilon(c_2)$ where the summation runs over all chords involved,
together with $c(s)$, into subdiagrams of the type $\raisebox{-10pt}{\includegraphics[width=.05\linewidth]{gausscode2.png}}$.

Thus, we obtain $v_2(K)=\frac{1}{4}+6\hat{v}_2$.

\end{proof}

\section{The double alternating self-linking integral of curves in 3-space}\label{SLL}

In this section we define the double alternating linking integral of open curves in 3-space and examine its relation to the second Vassiliev measure of open curves in 3-space.

\begin{definition}\label{V2Geom}
Let $l$ denote a curve in 3-space with parametrization $\gamma$. We define the double alternating self-linking integral as:

\begin{equation}
    \begin{split}
        SLL(l)&=\frac{1}{8\pi}\int_{0}^1\int_{0}^{j_1}\int_{0}^{j_2}\int_{0}^{j_3}(\dot{\gamma}(j_1)\times\dot{\gamma}(j_3))\cdot\frac{\gamma(j_1)-\gamma(j_3)}{|\gamma(j_1)-\gamma(j_3)|^3}(\dot{\gamma}(j_2)\times\dot{\gamma}(j_4))\cdot\frac{\gamma(j_2)-\gamma(j_4)}{|\gamma(j_2)-\gamma(j_4)|^3}\\
        &\chi(j_1,j_2,j_3,j_4)dj_4dj_3dj_2dj_1,
        %&\chi(\Gamma(j_1,j_3)+\Gamma(j_2,j_4))dj_4dj_3dj_2dj_1\\
    \end{split}
\end{equation}

\noindent where $\Gamma(s,t)=\frac{\gamma(s)-\gamma(t)}{|\gamma(s)-\gamma(t)|}$, for $s,t\in[0,1]$, $0\leq j_4<j_3<j_2<j_1\leq1$ and where $\chi(j_1,j_2,j_3,j_4)=1$, when $(j_1,j_2,j_3,j_4)\in E$ and $\chi(j_1,j_2,j_3,j_4)=0$, otherwise, where $E\subset[0,1]^4$, such that $0\leq j_1<j_2<j_3<j_4\leq 1$, $\Gamma(j_1,j_3)=-\Gamma(j_2,j_4)$ . %$\chi(\Gamma(j_1,j_3)+\Gamma(j_2,j_4))=1$ when $\Gamma(j_1,j_3)=-\Gamma(j_2,j_4)$ and $0$, otherwise.

%\begin{equation}
%    \begin{split}
%        SLL(l)&=\int_0^1\int_0^{j_1}\int_0^{j_2}\int_0^{j_3}p_{j_1,j_2,j_3,j_4}\epsilon(j_1,j_3)\epsilon(j_2,j_4)dj_4dj_3,dj_2dj_1
%=\int_{\vec{\xi}\in S^2}a_{2,1}(l_{\vec{\xi}})   
%\end{split}
%\end{equation}

%\noindent where $p_{j_1,j_2,j_3,j_4}$ denotes the geometric probability that $j_1,j_3$ and $j_2,j_4$ both cross in a projection direction and give an alternating crossing, and where $0\leq j_4<j_3<j_2<j_1\leq1$.

%\begin{equation}
%    \begin{split}
%        SLL(l)&=\frac{1}{(4\pi)^2}\int_0^1\int_0^{j_1}\int_0^{j_2}\int_0^{j_3}(\dot{\gamma(j_1)}\times\dot{\gamma(j_3)})\cdot\frac{\gamma(j_1)-\gamma(j_3)}{|\gamma(j_1)-\gamma(j_3)|^3}(\dot{\gamma(j_2)}\times\dot{\gamma(j_4)})\cdot\frac{\gamma(j_2)-\gamma(j_4)}{|\gamma(j_2)-\gamma(j_4)|^3}\\
%    &\delta(\Gamma(j_1,j_3)+\Gamma(j_2,j_4))dj_4dj_3dj_2dj_1\\
%    \end{split}
%\end{equation}

%\noindent where $\Gamma(s,t)=\frac{\gamma(s)-\gamma(t)}{|\gamma(s)-\gamma(t)|}$, for $s,t\in[0,1]$, and $0\leq j_4<j_3<j_2<j_1\leq1$.
\end{definition}

\begin{definition}
Let $j_1<j_2<j_3<j_4$ be points on a knot diagram. We will say that this 4-tuple corresponds to an alternating crossing when $j_1,j_3$ and $j_2,j_4$ are two crossing points in the diagram such that if $j_1$ belongs to the over-arc in the crossing, $j_2$ belongs to the under-arc in the crossing or vice-versa.  
\end{definition}

\begin{theorem}\label{V2Geom}
Let $l$ denote a curve in 3-space with parametrization $\gamma$. The double alternating self-linking integral can be expressed as:

\begin{equation}\label{sllg}
\begin{split}
    SLL(l)&=\frac{1}{8\pi}\int_{\vec{\xi}\in S^2}\sum_{j_1>j_2>j_3>j_4\in I_{\vec{\xi}}^*}\epsilon(j_1,j_3)\epsilon(j_2,j_4)dA,
\end{split}
\end{equation}

\noindent where $\epsilon(s,t)=\pm1$, is the sign of the crossing between the projection of $\gamma(s)$ and $\gamma(t)$, and where $I_{\vec{\xi}}^*$ denotes the set of 4-tuples of alternating crossings in the projection to the plane with normal vector $\vec{\xi}$.

%\begin{equation}
%    \begin{split}
%        SLL(l)&=\int_0^1\int_0^{j_1}\int_0^{j_2}\int_0^{j_3}p_{j_1,j_2,j_3,j_4}\epsilon(j_1,j_3)\epsilon(j_2,j_4)dj_4dj_3,dj_2dj_1
%%=\int_{\vec{\xi}\in S^2}a_{2,1}(l_{\vec{\xi}})   
%\end{split}
%\end{equation}

%\noindent where $p_{j_1,j_2,j_3,j_4}$ denotes the geometric probability that $j_1,j_3$ and $j_2,j_4$ both cross in a projection direction and give an alternating crossing, and where $0\leq j_4<j_3<j_2<j_1\leq1$.

\end{theorem}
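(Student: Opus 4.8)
The plan is to recognize the two bracketed kernels in Definition~\ref{V2Geom} as two copies of the Gauss--map Jacobian and to run, once for each block, the same change of variables that turns the Gauss linking integral into an average of signed crossing numbers over projection directions (as in \cite{Gauss1877,Panagiotou2020b}). Write $\Gamma(s,t)=\frac{\gamma(s)-\gamma(t)}{|\gamma(s)-\gamma(t)|}$. I would start from the two standard ingredients behind that identification: first, $\big(\dot{\gamma}(s)\times\dot{\gamma}(t)\big)\cdot\frac{\gamma(s)-\gamma(t)}{|\gamma(s)-\gamma(t)|^{3}}\,ds\wedge dt=\Gamma^{*}(dA)$, the pullback of the area form of $S^{2}$; and second, for a generic $\vec{\xi}\in S^{2}$ the solutions $(s,t)$ with $s\neq t$ of $\Gamma(s,t)=\vec{\xi}$ are in one-to-one correspondence with the crossings of the projection of $l$ along $\vec{\xi}$ between the strands through $\gamma(s)$ and $\gamma(t)$, the sign of the Jacobian of $\Gamma$ there being the crossing sign $\epsilon(s,t)$. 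Hence for any product of parameter intervals $R$ the change of variables $u=\Gamma(s,t)$ gives $\int_{R}\Gamma^{*}(dA)=\int_{S^{2}}\big(\sum_{(s,t)\in R,\,\Gamma(s,t)=\vec{\xi}}\epsilon(s,t)\big)\,dA$, and, localized at a fixed image $\vec{\eta}$, the integral of $\Gamma^{*}(dA)$ over $R$ evaluates to $\sum_{(s,t)\in R,\,\Gamma(s,t)=\vec{\eta}}\epsilon(s,t)$.

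Applying this separately to the ``$(j_{1},j_{3})$ block'' and the ``$(j_{2},j_{4})$ block'' of the integrand in Definition~\ref{V2Geom}, which involve disjoint sets of variables, and introducing sphere variables $\vec{\xi}_{1}=\Gamma(j_{1},j_{3})$ and $\vec{\xi}_{2}=\Gamma(j_{2},j_{4})$, converts $SLL(l)$ into an integral over $(\vec{\xi}_{1},\vec{\xi}_{2})\in S^{2}\times S^{2}$ whose integrand is the signed count of $4$-tuples $(j_{1},j_{2},j_{3},j_{4})$ with $0\le j_{4}<j_{3}<j_{2}<j_{1}\le 1$, $\Gamma(j_{1},j_{3})=\vec{\xi}_{1}$ and $\Gamma(j_{2},j_{4})=\vec{\xi}_{2}$, weighted by $\chi$. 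On this ordered simplex the two chords $\{j_{1},j_{3}\}$ and $\{j_{2},j_{4}\}$ are automatically linked in the Gauss diagram, so the only remaining effect of $\chi$ is the constraint $\Gamma(j_{1},j_{3})=-\Gamma(j_{2},j_{4})$, i.e.\ $\vec{\xi}_{2}=-\vec{\xi}_{1}$, which collapses the double sphere integral to a single integral over $\vec{\xi}=\vec{\xi}_{1}\in S^{2}$.

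The combinatorial heart of the argument is to check that, for fixed $\vec{\xi}$, the $4$-tuples $(j_{1},j_{2},j_{3},j_{4})$ with $j_{4}<j_{3}<j_{2}<j_{1}$, $\Gamma(j_{1},j_{3})=\vec{\xi}$ and $\Gamma(j_{2},j_{4})=-\vec{\xi}$ are exactly the $4$-tuples of alternating crossings in $I_{\vec{\xi}}^{*}$: both $(j_{1},j_{3})$ and $(j_{2},j_{4})$ are crossings of the projection along $\vec{\xi}$ since $\gamma(j_{1})-\gamma(j_{3})$ and $\gamma(j_{2})-\gamma(j_{4})$ are parallel to $\vec{\xi}$, and the relation $\Gamma(j_{1},j_{3})=-\Gamma(j_{2},j_{4})$ says precisely that $\gamma(j_{1})$ is the over-strand of its crossing if and only if $\gamma(j_{4})$ is the over-strand of the other one, which is the alternation condition of Definition~\ref{V2Geom}. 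Combining this with the product of the two Jacobian signs, which is $\epsilon(j_{1},j_{3})\epsilon(j_{2},j_{4})$, yields exactly $SLL(l)=\frac{1}{8\pi}\int_{\vec{\xi}\in S^{2}}\sum_{j_{1}>j_{2}>j_{3}>j_{4}\in I_{\vec{\xi}}^{*}}\epsilon(j_{1},j_{3})\epsilon(j_{2},j_{4})\,dA$, which is Eq.~\ref{sllg}.

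I expect the main obstacle to be the sign and normalization bookkeeping in the last two steps: one must fix compatible orientations of the projection plane, of $S^{2}$, and of the ordered simplex so that the block-diagonal Jacobian of $(j_{1},j_{2},j_{3},j_{4})\mapsto\big(\Gamma(j_{1},j_{3}),\Gamma(j_{2},j_{4})\big)$ contributes exactly $\epsilon(j_{1},j_{3})\epsilon(j_{2},j_{4})$ and not its negative, and one must verify that the constant $\tfrac{1}{8\pi}=\tfrac12\cdot\tfrac{1}{4\pi}$ correctly accounts for the fact that the projections along $\vec{\xi}$ and along $-\vec{\xi}$ coincide. Everything else is a faithful repetition, performed twice, of the classical identification of the Gauss linking integral with the mean writhe. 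Once Eq.~\ref{sllg} is in hand, the relation between $SLL(l)$ and the second Vassiliev measure $w_{2}(l)$ follows by combining it with Corollary~\ref{avv2} and Theorems~\ref{knotv2} and~\ref{knotoid2}.
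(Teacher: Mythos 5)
Your overall strategy---recognizing the two bracketed kernels as Gauss-map Jacobians, changing variables to the sphere, and identifying the antipodality condition $\Gamma(j_1,j_3)=-\Gamma(j_2,j_4)$ with the alternation of the two crossings---is the same one the paper uses, and your combinatorial paragraph essentially reproduces the opening of the paper's proof. The difference is the direction of travel: you push forward from the parametric integral to $S^2\times S^2$, whereas the paper starts from the right-hand side of Eq.~\ref{sllg}, passes to a polygonal approximation, localizes to the spherical region $A$ on which two given pairs of edges cross alternately, and pulls that two-dimensional sphere integral back through the map $G^*(j_1,j_2,j_3,j_4)=(\Gamma(j_1,j_3),\Gamma(j_2,j_4))$ to obtain the parametric expression with the indicator $\chi$, letting $n\to\infty$ at the end. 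The polygonal step also lets the paper isolate the degenerate configurations in which the four parameters lie on three (or fewer) edges; your argument never addresses these.

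The genuine gap is the ``collapse'' step. After your change of variables the integral lives on $S^2\times S^2$ and $\chi$ has become the indicator of the anti-diagonal $\{\vec{\xi}_2=-\vec{\xi}_1\}$, a codimension-two subset; the Lebesgue integral of an indicator of a measure-zero set is $0$, so the double sphere integral does not ``collapse'' to a single integral over $S^2$---it vanishes. Equivalently, back in the parameter domain the set $E=\{\Gamma(j_1,j_3)=-\Gamma(j_2,j_4)\}$ is cut out by two scalar equations and is generically two-dimensional inside the four-dimensional simplex, so the four-fold integral against an honest indicator is zero. To make your direction of argument work you must interpret $\chi$ as a two-dimensional delta distribution supported on $E$ (a coarea-type identity), or else reverse the argument as the paper does: start from the well-defined two-dimensional integral $\frac{1}{8\pi}\int_{\vec{\xi}\in A}\epsilon(j_1,j_3)\epsilon(j_2,j_4)\,dA$ over the region where both pairs cross alternately and obtain the parametric side as its pullback. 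One of these repairs is needed before the final identity can be asserted; the sign and normalization issues you flag are real but secondary to this.
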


\begin{proof}
%We can rewrite

%\begin{equation}
%\begin{split}
%    w_2(l)&=\frac{1}{2}\frac{1}{4\pi}\int_{\vec{\xi}\in S^2}\sum_{j_1>j_2>j_3>j_4\in I_{\vec{\xi}}\prime}\epsilon(j_1,j_3)\epsilon(j_2,j_4)dA\\
%    &=\frac{1}{2}\frac{1}{4\pi}\int_{\vec{\xi}\in S^2}\sum_{j_1>j_2>j_3>j_4\in I_{\vec{\xi}}}\epsilon(j_1,j_3)\epsilon(j_2,j_4)dA\\
%    &-\frac{1}{2}\frac{1}{4\pi}\int_{\vec{\xi}\in S^2}\sum_{j_1>j_2>j_3>j_4\in I_{vec{\xi}}\setminus I_{\vec{\xi}}}\epsilon(j_1,j_3)\epsilon(j_2,j_4)dA
%\end{split}
%\end{equation}

%The first integral in the later expression represents the average algebraic sum of pairs of crossings over all possible projection directions. We will express this as an integral over the knot using a Gauss map.

%More precisely, we will integrate over all ordered 4-tuples on the knot, accounting only for those which generate pairs of alternating crossings. 

%In Proposition \ref{w2prob} we showed that 

%\begin{equation}
%w_2(l)=\int_0^1\int_0^{j_1}\int_0^{j_2}\int_0^{j_3}p_{j_1,j_2,j_3,j_4}\epsilon(j_1,j_3)\epsilon(j_2,j_4)dj_4dj_3,dj_2dj_1
%%=\int_{\vec{\xi}\in S^2}a_{2,1}(l_{\vec{\xi}})    
%\end{equation}

%\noindent where $p_{j_1,j_2,j_3,j_4}$ denotes the geometric probability that $j_1,j_3$ and $j_2,j_4$ both cross in a projection direction and give an alternating crossing. 

Let $\gamma(t)$ denote a parametrization of $l$ and let $\Gamma(s,t)$ denote the Gauss map  $\Gamma(s,t)=\frac{\gamma(s)-\gamma(t)}{|\gamma(s)-\gamma(t)|}$. Let  $\gamma(j_1),\gamma(j_2),\gamma(j_3),\gamma(j_4)$ denote 4 points on $l$  such that $\Gamma(j_1,j_3)=-\Gamma(j_2,j_4)$. Then the projections of $\gamma(j_1),\gamma(j_3)$ and of $\gamma(j_2),\gamma(j_4)$ on the plane with normal vector $\vec{\xi}=\Gamma(j_1,j_3)$, coincide, creating two crossings, one where the arc containing $\gamma(j_1)$ is ``over'' and one where the arc containing $\gamma(j_2)$ is ``under''. Thus the crossings are ``alternating''. 

Let $l_n$ denote a polygonal approximation of $l$ obtained from a partition of the interval $[0,1]$. Let $\gamma(t),t\in[0,1]$ be a parametrization of the polygonal curve $l_n$. Then we can express the integral in the right hand-side of Eq. \ref{sllg} as

\begin{equation}
    \begin{split}
        &\frac{1}{8\pi}\int_{\vec{\xi}\in S^2}\sum_{j_1>j_2>j_3>j_4\in I_{\vec{\xi}}^*}\epsilon(j_1,j_3)\epsilon(j_2,j_4)dA\\
        &=\frac{1}{8\pi}\sum_{1\leq i\leq j \leq k\leq l\leq n}\int_{\vec{\xi}\in S^2}\epsilon^*(j_1,j_3)\epsilon^*(j_2,j_4)dA,
    \end{split}
\end{equation}

\noindent where $i,j,k,l$ are indices of the edges of the polygonal curve $l_n$ and where $\epsilon^*(s,t)$ can take values $0$, or $\pm1$, depending on whether the projection of $\gamma(s),\gamma(t)$ cross (and with what sign) or not. We note that the integral in the latter expression may be non zero, when either $i<j<k<l$ -thus, involving 4 edges- or when at most 2 of the edges are identified, ie. $i=j<k<l$ or $i<j=k<l$ or $i<j<k=l$. If more indices are identified, thus only two edges or only one are involved, it is impossible to have 2 crossings in their projection. We focus on the case where $i<j<k<l$, thus we have 4 different edges involved. (The case of 3 edges involved can be treated similarly and we will not discuss it in this proof). Two edges cross on a spherical quadrangle (and its antipodal) \cite{Banchoff1976}. Thus, two pairs of edges cross at the intersection of the two spherical quadrangles. Let us denote this intersection, which is a spherical polygon (and its antipodal) $A$. Then,

\begin{equation}
    \begin{split}
       &\frac{1}{8\pi}\int_{\vec{\xi}\in S^2}\epsilon^*(j_1,j_3)\epsilon^*(j_2,j_4)dA\\
       &=\frac{1}{8\pi}\int_{\vec{\xi}\in S^2\setminus A}\epsilon^*(j_1,j_3)\epsilon^*(j_2,j_4)dA+\frac{1}{8\pi}\int_{\vec{\xi}\in A}\epsilon^*(j_1,j_3)\epsilon^*(j_2,j_4)dA\\
       &=\frac{1}{8\pi}\int_{\vec{\xi}\in A}\epsilon(j_1,j_3)\epsilon(j_2,j_4)dA,
    \end{split}
\end{equation}

\noindent since $\int_{\vec{\xi}\in S^2\setminus A}\epsilon(j_1,j_3)\epsilon(j_2,j_4)dA=0$.
Let us denote $\gamma_1(j_1)$, $\gamma_2(j_2)$, $\gamma_3(j_3)$, $\gamma_4(j_4)$, where $j_1,j_2,j_3,j_4\in([0,1])^4$ the parametrizations of the edges $l,k,j,i$, respectively. Let us define the map $G^*$ from any 4-tuple in $[0,1]^4$ to $S^2\times S^2$, such that $G^*(j_1,j_2,j_3,j_4)=(\Gamma_1(j_1,j_3),\Gamma_2(j_2,j_4))$, where $\Gamma_1(j_1,j_3)=\frac{\gamma_1(j_1)-\gamma_3(j_3)}{|\gamma_1(j_1)-\gamma_3(j_3)|^3}$ and $\Gamma_2(j_2,j_4)=\frac{\gamma_1(j_2)-\gamma_3(j_4)}{|\gamma_1(j_2)-\gamma_3(j_4)|^3}$. Then by changing variables in the integral we obtain 

\begin{equation}
    \begin{split}
       &\frac{1}{8\pi}\int_{\vec{\xi}\in A}\epsilon(j_1,j_3)\epsilon(j_2,j_4)dA\\
       &=\frac{1}{8\pi}\int_{0^*}^{1^*}\int_{0^*}^{1^*}\int_{0^*}^{1^*}\int_{0^*}^{1^*}(\dot{\gamma_1}(j_1)\times\dot{\gamma_3}(j_3))\\
       &\cdot\frac{\gamma_1(j_1)-\gamma_3(j_3)}{|\gamma_1(j_1)-\gamma_3(j_3)|^3}(\dot{\gamma_2}(j_2)\times\dot{\gamma_4}(j_4))\cdot\frac{\gamma_2(j_2)-\gamma_4(j_4)}{|\gamma_2(j_2)-\gamma_4(j_4)|^3}dj_4dj_3dj_2dj_1,
    \end{split}
\end{equation}

\noindent where $|(\dot{\gamma(j_1)}\times\dot{\gamma(j_3)})\cdot\frac{\gamma(j_1)-\gamma(j_3)}{|\gamma(j_1)-\gamma(j_3)|^3}(\dot{\gamma(j_2)}\times\dot{\gamma(j_4)})\cdot\frac{\gamma(j_2)-\gamma(j_4)}{|\gamma(j_2)-\gamma(j_4)|^3}|$ is the Jacobian of $\Gamma^*$ and where the sign of  $(\dot{\gamma(j_1)}\times\dot{\gamma(j_3)})\cdot\frac{\gamma(j_1)-\gamma(j_3)}{|\gamma(j_1)-\gamma(j_3)|^3}$ is the sign of the crossing of the projections of the edges $e_1,e_3$ (when they cross in a projection direction) and $(\dot{\gamma(j_2)}\times\dot{\gamma(j_4)})\cdot\frac{\gamma(j_2)-\gamma(j_4)}{|\gamma(j_2)-\gamma(j_4)|^3}$ is the sign of the crossing of the projections of the edges $e_2,e_4$ (when they cross in a projection direction). The symbol $*$ in the integral indicates integration over the subset of $[0,1]^4$ which defines 4-tuples of points on the knot that define vectors which give alternating crossings. This is the subset of $[0,1]^4$ whose image through $\Gamma^*$ is $A$, we denote this subset $E=([0,1]^4)^*$. Since $\Gamma$ is a continuous function and $A$ is measurable, its pre-image, $E=([0,1]^4)^*$, is also measurable.  Instead of integrating over $E$ we can integrate over $[0,1]^4$ as follows

% These are the points in $[0,1]^4$ for which $\Gamma^*=(\Gamma_1(s,t),-\Gamma_1(s,t)$. Let $([0,1]^4)^*=([0,1]^2)^{*,1}\times([0,1]^2)^{*,2}$, where $([0,1]^2)^{*,1}$ is the preimage of $A$ through $\Gamma_1$ and  We note that $([0,1])^*$ is the pre-image of the non-zero area $A$ through the map $\Gamma$.

\begin{equation}
    \begin{split}
       &\frac{1}{8\pi}\int_{\vec{\xi}\in A}\epsilon(j_1,j_3)\epsilon(j_2,j_4)dA\\
       &=\frac{1}{8\pi}\int_0^1\int_0^1\int_0^1\int_0^1(\dot{\gamma_1}(j_1)\times\dot{\gamma_3}(j_3))\cdot\frac{\gamma_1(j_1)-\gamma_3(j_3)}{|\gamma_1(j_1)-\gamma_3(j_3)|^3}(\dot{\gamma_2}(j_2)\times\dot{\gamma_4}(j_4))\cdot\frac{\gamma_2(j_2)-\gamma_4(j_4)}{|\gamma_2(j_2)-\gamma_4(j_4)|^3}\\
       &\chi(j_1,j_2,j_3,j_4)dj_4dj_3dj_2dj_1,
    %&\delta(\Gamma(j_1,j_3)+\Gamma(j_2,j_4))dj_4dj_3dj_2dj_1\\
    \end{split}
\end{equation}

\noindent where $\chi(j_1,j_2,j_3,j_4)=1$, when $(j_1,j_2,j_3,j_4)\in E$ and $\chi(j_1,j_2,j_3,j_4)=0$, otherwise. %$\delta(\Gamma(j_1,j_3)+\Gamma(j_2,j_4))=1$, when $\Gamma(j_1,j_3)=-\Gamma(j_2,j_4)$ and $0$, otherwise.

At the limit when $n\rightarrow\infty$, we obtain the formula:

\begin{equation}
\begin{split}
    &\frac{1}{8\pi}\int_{\vec{\xi}\in S^2}\sum_{j_1>j_2>j_3>j_4\in I_{\vec{\xi}}}\epsilon(j_1,j_3)\epsilon(j_2,j_4)dA\\
    &=\frac{1}{8\pi}\int_{j_1}\int_{j_2}\int_{j_3}\int_{j_4}(\dot{\gamma}(j_1)\times\dot{\gamma}(j_3))\cdot\frac{\gamma(j_1)-\gamma(j_3)}{|\gamma(j_1)-\gamma(j_3)|^3}(\dot{\gamma}(j_2)\times\dot{\gamma}(j_4))\cdot\frac{\gamma(j_2)-\gamma(j_4)}{|\gamma(j_2)-\gamma(j_4)|^3}\\
    &\chi(j_1,j_2,j_3,j_4)dj_4dj_3dj_2dj_1\\
    %&\delta(\Gamma(j_1,j_3)+\Gamma(j_2,j_4))dj_4dj_3dj_2dj_1\\
    &=SLL(l).
\end{split}
\end{equation}

\end{proof}

\begin{proposition}\label{w2prob}

Let $l$ denote a closed curve in 3-space. Then the double alternating self-linking integral is a topological invariant and it is related to the second Vassililev invariant of the enhanced Jones polynomial of $l$ by

\begin{equation}\label{openv2}
v_2(l)=\frac{1}{4}+6SLL(l).
%=\int_{\vec{\xi}\in S^2}a_{2,1}(l_{\vec{\xi}})    
\end{equation}

%\noindent where $p_{j_1,j_2,j_3,j_4}$ denotes the geometric probability that $j_1,j_3$ and $j_2,j_4$ both cross in a projection direction and give an alternating crossing. 

\end{proposition}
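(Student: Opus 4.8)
The plan is to recognize the integrand of the formula in Theorem~\ref{V2Geom} as (twice) the diagrammatic quantity $\hat v_2$ from Theorem~\ref{knotv2}, and then exploit the fact that $\hat v_2$ is a genuine knot invariant to collapse the integral over $S^2$. First I would fix a generic $\vec\xi\in S^2$ and observe that, because $l$ is closed, its projection $l_{\vec\xi}$ is an ordinary knot diagram of the knot type of $l$. Choosing the parametrization $\gamma$ so that it breaks the cyclic order at a basepoint, the set $I_{\vec\xi}^*$ of ordered $4$-tuples $j_1>j_2>j_3>j_4$ of alternating crossings in that diagram is precisely the set $I'$ of pairs of alternating crossings appearing in the definition of $\hat v_2$, so that
\[
\sum_{j_1>j_2>j_3>j_4\in I_{\vec\xi}^*}\epsilon(j_1,j_3)\,\epsilon(j_2,j_4)=2\,\hat v_2(l_{\vec\xi}).
\]

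Next I would invoke Theorem~\ref{knotv2}: $\hat v_2$ is a second Vassiliev invariant of knots, equivalently $\hat v_2=\tfrac16\bigl(v_2-\tfrac14\bigr)$ with $v_2$ the second Vassiliev invariant of the enhanced Jones polynomial. Hence $\hat v_2(l_{\vec\xi})$ is independent of the projection direction $\vec\xi$ (outside the measure-zero set of non-generic directions) and equals $\hat v_2(l)$. Substituting into Theorem~\ref{V2Geom},
\[
SLL(l)=\frac{1}{8\pi}\int_{\vec\xi\in S^2}2\,\hat v_2(l)\,dA=\frac{2\,\hat v_2(l)}{8\pi}\cdot 4\pi=\hat v_2(l),
\]
which is a topological invariant of $l$; this settles the first claim. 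Plugging $\hat v_2(l)=SLL(l)$ into the identity $v_2(l)=\tfrac14+6\,\hat v_2(l)$ of Theorem~\ref{knotv2} gives $v_2(l)=\tfrac14+6\,SLL(l)$. As a cross-check one can run the argument through the Vassiliev measure instead: by Corollary~\ref{avv2} and $w_2(l)=v_2(l)$ for closed $l$, one has $v_2(l)=\frac{1}{4\pi}\int_{S^2}v_2(l_{\vec\xi})\,dA=\frac14+6\cdot\frac{1}{4\pi}\int_{S^2}\hat v_2(l_{\vec\xi})\,dA$, and the same constant-integrand collapse recovers $\tfrac14+6\,SLL(l)$.

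The hard part will be the bookkeeping in the first step: I need to be careful that the $4$-tuples in $I_{\vec\xi}^*$ match, with the right multiplicity and sign, the pairs of alternating crossings $I'$ used to define $\hat v_2$, so that the factor $\tfrac12$ in $\hat v_2$ exactly absorbs the two orderings of a pair of crossings — hence the clean identity $SLL(l)=\hat v_2(l)$ rather than a spurious extra factor. I should also explicitly confirm that $\hat v_2$, which is written via a parametrization, does not depend on the basepoint chosen to break the cyclic order on the closed curve; this is already contained in the assertion of Theorem~\ref{knotv2} that $\hat v_2$ is a knot invariant (the Polyak--Viro-type statement), but it is the single place where the closedness of $l$ is genuinely used.
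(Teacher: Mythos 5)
Your proposal is correct and follows essentially the same route as the paper: both use the Gauss-diagram formula for $v_2$ (Theorem \ref{knotv2}), the fact that the invariant is independent of the projection direction to collapse the average over $S^2$, and Theorem \ref{V2Geom} to identify the resulting spherical integral with $SLL(l)$. Your version is in fact slightly cleaner in that it cites the knot version (Theorem \ref{knotv2}) rather than the knotoid version and makes the factor-of-two bookkeeping between $I_{\vec{\xi}}^*$ and the pairs of alternating crossings explicit.
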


\begin{proof}
By Theorem \ref{V2Geom} and by Theorem \ref{knotoid2},

\begin{equation}
v_2(l)=\frac{1}{4}+6\Bigl(\frac{1}{2}\sum_{j_1>j_2>j_3>j_4\in I_{\vec{\xi}}^*}\epsilon(j_1,j_3)\epsilon(j_2,j_4)\Bigr),
\end{equation}

\noindent where $I_{\vec{\xi}}^*$ denotes the set of pairs of alternating crossings in a diagram $l_{\vec{\xi}}$.

Since $v_2(l)$ is an invariant, it is independent of the projection direction.
We can also express $v_2(l)$ it as follows:

\begin{equation}\label{openv2}
\begin{split}
    v_2(l)&=\frac{1}{4\pi}\int_{\vec{\xi}\in S^2}\frac{1}{4}+6\Bigl(\frac{1}{2}\sum_{j_1>j_2>j_3>j_4\in I_{\vec{\xi}}^*}\epsilon(j_1,j_3)\epsilon(j_2,j_4)\Bigr)dA\\
    &=\frac{1}{4}+6\frac{1}{8\pi}\int_{\vec{\xi}\in S^2}\sum_{j_1>j_2>j_3>j_4\in I_{\vec{\xi}}^*}\epsilon(j_1,j_3)\epsilon(j_2,j_4)\Bigr)dA.
%=\int_{\vec{\xi}\in S^2}a_{2,1}(l_{\vec{\xi}})   
\end{split}
\end{equation}

By Theorem \ref{V2Geom}, $v_2(l)=\frac{1}{4}+6SLL(l)$.

\end{proof}

\begin{remark}
Note that this method of writing an integral in space for a Vassiliev invariant would work for any combinatorial expression for a Vassiliev invariant. The double alternating self-linking integral has similarities with the second Vassiliev invariant integral expression obtained from the perturbative expansion of Witten's integral \cite{Witten1989}. This relation will be explored in a sequel to this study.
\end{remark}

\begin{proposition}\label{cont}
Let $l$ denote an open curve in 3-space. $SLL(l)$ is a continuous function of the coordinates of $l$.
\end{proposition}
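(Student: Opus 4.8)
The plan is to reduce the statement to the case of a polygonal curve $l_n$ and to analyse the right--hand side of the identity provided by Theorem~\ref{V2Geom}. By that theorem,
\[
SLL(l)=\frac{1}{8\pi}\int_{\vec{\xi}\in S^2}f_l(\vec{\xi})\,dA,\qquad f_l(\vec{\xi})=\sum_{j_1>j_2>j_3>j_4\in I_{\vec{\xi}}^*}\epsilon(j_1,j_3)\,\epsilon(j_2,j_4),
\]
so that $f_l(\vec{\xi})$ is the signed number of $4$--tuples of alternating crossings in the diagram $l_{\vec{\xi}}$. First I would fix a polygonal curve $l_n$ with vertex tuple $P$ and write $l_n(P)$ for it; since $(l_n(P))_{\vec{\xi}}$ has at most $\binom{n}{2}$ crossings in any generic direction, $f_{l_n(P)}(\vec{\xi})$ is bounded in absolute value by a constant $M_n$ depending only on $n$, and it is piecewise constant on $S^2$, changing only across the measure--zero locus of non-generic directions (two vertices projecting to one point, a vertex onto an edge, three concurrent edges, an endpoint onto an edge, or $\vec{\xi}$ parallel to an edge). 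Equivalently, as in the proof of Theorem~\ref{V2Geom}, $SLL(l_n)$ is a finite sum of terms $\pm\tfrac{1}{8\pi}|A|$, where each region $A\subset S^2$ is an intersection of the finitely many spherical quadrangles cut out by the pairs of edges of $l_n$.

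The key steps would then be the following. (i) Each genericity condition on $(\vec{\xi},P)$ is an open condition, so for every $\vec{\xi}$ that is generic with respect to $l_n(P_0)$ there is a neighbourhood of $P_0$ on which $\vec{\xi}$ stays generic and the combinatorial type of $(l_n(P))_{\vec{\xi}}$---and hence the set $I^*_{\vec{\xi}}$ together with all the crossing signs---is unchanged; consequently $f_{l_n(P)}(\vec{\xi})\to f_{l_n(P_0)}(\vec{\xi})$ as $P\to P_0$ for almost every $\vec{\xi}\in S^2$. (ii) By the dominated convergence theorem, with dominating constant $M_n$, it follows that $P\mapsto SLL(l_n(P))=\tfrac{1}{8\pi}\int_{S^2}f_{l_n(P)}\,dA$ is continuous; alternatively, one may invoke the continuity of the (area of the) spherical quadrangle determined by a pair of segments in the coordinates of their endpoints (as in \cite{Banchoff1976}, and as used in \cite{Panagiotou2020b}), together with the local constancy of the signs, to see that the finite sum $\pm\tfrac{1}{8\pi}|A|$ depends continuously on $P$. (iii) Finally, passing to the limit $n\to\infty$ exactly as in the earlier continuity arguments of this paper (cf.\ Proposition~\ref{discp} and the continuity of $w_k$) yields the statement for an arbitrary open curve $l$.

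The step I expect to be the main obstacle is (ii), or rather the point underlying it: the integrand $f_{l_n}$ and the combinatorial type of $(l_n)_{\vec{\xi}}$ genuinely jump as the vertices cross a non-generic configuration, so one must show these jumps do not spoil continuity of the integral. This is precisely handled by the two facts just used---the jump locus on $S^2$ has area zero and varies continuously with $P$ (so the regions $A$ that appear or disappear do so with area tending to $0$), while $f_{l_n}$ remains uniformly bounded---which is the same mechanism employed in \cite{Panagiotou2020b} to prove continuity of the geometric probabilities $p(K_i)$ and of the Jones polynomial of open curves. Once the polygonal case is established, the $n\to\infty$ passage is routine and parallel to the corresponding steps already carried out for $w_k$, so I do not anticipate further difficulties.
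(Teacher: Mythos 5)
Your proposal is correct and follows essentially the same route as the paper: reduce to a polygonal curve, write $SLL(l_n)$ as a finite sum over $4$--tuples of edges of signed areas of intersections of spherical quadrangles, observe these areas vary continuously with the vertex coordinates, and pass to $n\to\infty$. Your dominated-convergence phrasing of step (ii) is just a (slightly more careful) repackaging of the paper's appeal to continuity of the quadrangle-intersection areas from \cite{Panagiotou2020b}, so no substantive difference.
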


\begin{proof}
We will prove this first for a polygonal curve of $n$ edges and the result will follow for any curve $l\in R^3$ as $n\rightarrow\infty$.

The double alternating self-linking integral of a polygonal curve can be expressed as

\begin{equation}\label{finp}
    \begin{split}
        SLL(l_n)&=\frac{1}{8\pi}\int_{\vec{\xi}\in S^2}\sum_{j_1>j_2>j_3>j_4\in I_{\vec{\xi}}^*}\epsilon(j_1,j_3)\epsilon(j_2,j_4)dA\\
        &=\frac{1}{8\pi}\sum_{1\leq i\leq j \leq k\leq l\leq n}\int_{\vec{\xi}\in S^2}\epsilon(j_1,j_3)\epsilon(j_2,j_4)dA\\
        &=\frac{1}{2}\sum_{1\leq i\leq j \leq k\leq l\leq n}p_{j_1,j_2,j_3,j_4}\epsilon(j_1,j_3)\epsilon(j_2,j_4),\\
    \end{split}
\end{equation}

%\begin{equation}
%    \begin{split}
%        SLL(l)&=\sum_{0\leq j_1<j_2<j_3<j_4\leq n}\int_0^{1}\int_0^{1}\int_0^{!}p_{j_1,j_2,j_3,j_4}\epsilon(j_1,j_3)\epsilon(j_2,j_4)dj_4dj_3,dj_2dj_1
%=\int_{\vec{\xi}\in S^2}a_{2,1}(l_{\vec{\xi}})   
%\end{split}
%\end{equation}

\noindent where $p_{j_1,j_2,j_3,j_4}$ denotes the geometric probability that the edges $e_{j_1},e_{j_3}$ and $e_{j_2},e_{j_4}$ both cross in a projection direction and give an alternating crossing.

We can express $p_{j_1,j_2,j_3,j_4}$ as the joint probability that $e_{j_1},e_{j_3}$ and $e_{j_2},e_{j_4}$ both cross. In \cite{Panagiotou2020b}, it was proved that the geometric probability that $e_{j_1},e_{j_3}$ cross, $p_{j_1,j_3}$, and the geometric probability that $e_{j_2},e_{j_4}$ cross, $p_{j_2,j_4}$ are continuous and are equal to the areas of the corresponding quadrangles on the sphere. Their intersection, $p_{j_1,j_2,j_3,j_4}$, is the area of the intersection of the two spherical quadrangles. Since both areas are continuous functions of the coordinates of the involved edges, so does their intersection.

\end{proof}

\begin{corollary}
Let $l$ denote an open curve in 3-space.
Then, as the endpoints of $l$ tend to coincide, $SLL(l)$ tends to $\frac{1}{6}v_2(l)-\frac{1}{24}$.
\end{corollary}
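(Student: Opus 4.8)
The plan is to reduce the statement to Proposition \ref{w2prob} using the continuity of $SLL$ established in Proposition \ref{cont}. Write $\hat{l}$ for the closed curve obtained from $l$ by letting its two endpoints come together; this is the ``resulting knot'' of the statement, and ``$v_2(l)$'' in the conclusion is to be read as $v_2(\hat{l})$, the second Vassiliev invariant of the enhanced Jones polynomial of $\hat{l}$.

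First I would observe that the configuration-space integral defining $SLL$, equivalently the spherical integral of Theorem \ref{V2Geom}, is built purely from a parametrization $\gamma\colon[0,1]\to R^3$ and never invokes the condition $\gamma(0)=\gamma(1)$, so $SLL$ is one and the same functional for open and for closed curves. Hence the polygonal approximation argument of Proposition \ref{cont} --- which writes $SLL(l_n)=\tfrac12\sum p_{j_1,j_2,j_3,j_4}\,\epsilon(j_1,j_3)\epsilon(j_2,j_4)$ with each $p_{j_1,j_2,j_3,j_4}$ the continuously varying area of an intersection of two spherical quadrangles --- applies verbatim to a sequence of polygonal curves whose first and last vertices are allowed to approach one another, and yields $SLL(l)\to SLL(\hat{l})$ as the endpoints of $l$ tend to coincide.

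Next I would apply Proposition \ref{w2prob} to the closed curve $\hat{l}$: there $SLL(\hat{l})$ is a topological invariant satisfying $v_2(\hat{l})=\tfrac14+6\,SLL(\hat{l})$, so $SLL(\hat{l})=\tfrac16\bigl(v_2(\hat{l})-\tfrac14\bigr)=\tfrac16 v_2(\hat{l})-\tfrac1{24}$. Combining this with the previous step gives $SLL(l)\to\tfrac16 v_2(\hat{l})-\tfrac1{24}$, which is the claim. (By Corollary \ref{avv2} together with Theorem \ref{knotoid2} the Vassiliev measure $w_2(l)$ also tends to $v_2(\hat{l})$ in this limit, so the limiting value may equally be written $\tfrac16\bigl(\lim w_2(l)\bigr)-\tfrac1{24}$.)

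The point needing genuine care, which I would treat as the main obstacle, is justifying that the continuity of $SLL$ proved for open curves in Proposition \ref{cont} persists up to and including the degenerate configuration $\gamma(0)=\gamma(1)$: one must check that the areas $p_{j_1,j_2,j_3,j_4}$ appearing in Eq. \ref{finp} remain continuous as the first and last edges acquire a common endpoint, and that no signed crossing contribution is created or destroyed in the limit beyond those already present for $\hat{l}$ --- in particular that the ``extra'' crossings a generic projection of a non-closed curve can exhibit, which may make the projected knotoid differ from the projected knot, disappear as the endpoints merge, so that $v_2$ of the projected knotoid agrees with $v_2$ of the projected knot in the limit and Theorem \ref{knotoid2} (hence Proposition \ref{w2prob}) may be invoked. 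Since each $p_{j_1,j_2,j_3,j_4}$ is the area of an intersection of spherical quadrangles with continuously varying vertices and the integrand defining $SLL$ is bounded on the compact regions, bounded away from the diagonal, that actually contribute, this presents no difficulty and the corollary follows.
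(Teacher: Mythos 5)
Your argument is correct and is exactly the route the paper takes: its proof of this corollary is the one-line observation that the claim follows from Proposition \ref{w2prob} (the closed-curve identity $v_2=\tfrac14+6\,SLL$) combined with Proposition \ref{cont} (continuity of $SLL$ in the curve coordinates). Your additional discussion of why the continuity persists as the endpoints merge is a reasonable elaboration of a point the paper leaves implicit, but it does not change the approach.
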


\begin{proof}
It follows directly from Proposition \ref{w2prob} and Proposition \ref{cont}.
\end{proof}

\begin{definition}
We define a tight open knot to be a fixed open curve in 3-space whose projections give only knot-type knotoids. %endpoints are on a sphere of radius much greater than the knotted portion of the curve. 
\end{definition}

\begin{proposition}
Suppose that $l$ is a tight open knot. Then the double alternating self-linking integral is related to the second Vassiliev measure of $l$ as $w_2(l)=\frac{1}{4}+6SLL(l)$.
\end{proposition}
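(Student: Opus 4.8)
The plan is to combine the averaging description of $w_2$ from Corollary \ref{avv2} with the pointwise identity for $v_2$ of knot-type knotoids from Theorem \ref{knotoid2} and the integral formula from Theorem \ref{V2Geom}. First I would recall that, by definition of a tight open knot, every generic projection $l_{\vec{\xi}}$ is a knot-type knotoid, so Theorem \ref{knotoid2} applies in \emph{every} (generic) projection direction and gives $v_2(l_{\vec{\xi}}) = \frac{1}{4} + 6\hat v_2(l_{\vec{\xi}}) = \frac{1}{4} + 6\cdot\frac{1}{2}\sum_{j_1>j_2>j_3>j_4\in I_{\vec{\xi}}'}\epsilon(j_1,j_3)\epsilon(j_2,j_4)$. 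Note that, because the projection is a knot-type knotoid, the set $I_{\vec{\xi}}'$ of pairs of alternating crossings coincides with the set $I_{\vec{\xi}}^*$ of $4$-tuples of alternating crossings used in Theorem \ref{V2Geom}, so I can freely pass between the two notations.

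Next I would integrate this pointwise identity over $S^2$ and divide by $4\pi$, using Corollary \ref{avv2}:
\begin{equation}
\begin{split}
w_2(l) &= \frac{1}{4\pi}\int_{\vec{\xi}\in S^2} v_2(l_{\vec{\xi}})\, dA
= \frac{1}{4\pi}\int_{\vec{\xi}\in S^2}\Bigl(\frac{1}{4} + 6\cdot\frac{1}{2}\sum_{j_1>j_2>j_3>j_4\in I_{\vec{\xi}}^*}\epsilon(j_1,j_3)\epsilon(j_2,j_4)\Bigr) dA\\
&= \frac{1}{4} + 6\cdot\frac{1}{8\pi}\int_{\vec{\xi}\in S^2}\sum_{j_1>j_2>j_3>j_4\in I_{\vec{\xi}}^*}\epsilon(j_1,j_3)\epsilon(j_2,j_4)\, dA.
\end{split}
\end{equation}
The constant term integrates to $\frac{1}{4\pi}\cdot 4\pi\cdot\frac14 = \frac14$, and the factor $6\cdot\frac12\cdot\frac{1}{4\pi} = \frac{6}{8\pi}$ in front of the double sum is exactly the normalizing constant appearing in Theorem \ref{V2Geom}. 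Hence the second term equals $6\, SLL(l)$ by Theorem \ref{V2Geom}, giving $w_2(l) = \frac14 + 6\, SLL(l)$ as claimed.

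The only point requiring care — and the main (mild) obstacle — is the interchange of integration and summation and the handling of the measure-zero set of non-generic projection directions. I would note that for a fixed polygonal approximation $l_n$ the sphere decomposes into finitely many spherical polygons on whose interiors the knotoid type, and hence $v_2(l_{\vec{\xi}})$, is locally constant (as in the proof of Proposition \ref{discp}), so the integral is literally a finite weighted sum and Fubini is trivial; the non-generic directions form a finite union of arcs of measure zero and contribute nothing. Passing to the limit $n\to\infty$ is justified exactly as in Proposition \ref{cont}, since $SLL$ and each geometric probability are continuous in the curve coordinates, and the tight-open-knot hypothesis is preserved in a neighbourhood of $l$ so that Theorem \ref{knotoid2} remains applicable along the approximating sequence. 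This completes the argument.
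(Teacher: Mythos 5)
Your argument is correct and follows essentially the same route as the paper: the paper's own proof simply writes $w_2(l)=\frac{1}{4\pi}\int_{S^2}v_2(l_{\vec{\xi}})\,dA$, notes that tightness makes every generic projection a knot-type knotoid, and cites Theorem \ref{knotoid2} and Theorem \ref{V2Geom}, which is exactly the computation you carry out explicitly (and your bookkeeping of the constants $6\cdot\frac12\cdot\frac{1}{4\pi}=\frac{6}{8\pi}$ is right). Your additional remarks on the measure-zero non-generic directions and the polygonal decomposition are details the paper leaves implicit; they do not change the argument.
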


\begin{proof}
Let $l$ denote a tight knot in 3-space. The second Vassiliev measure is defined as follows:

\begin{equation}\label{v2av}
    w_2(l)=\frac{1}{4\pi}\int_{\vec{\xi}\in S^2}v_2(l_{\vec{\xi}})dA,
\end{equation}

\noindent where $v_2(l_{\vec{\xi}})$ is the second Vassiliev invariant of the knotoid that results from the projection of $l$ on the plane with normal vector $\vec{\xi}$.

Since $l$ is a tight knot, $l_{\vec{\xi}}$ is a knot-type knotoid for any $\vec{\xi}$ that defines a non-generic projection. 

The result follows by Theorem \ref{knotoid2} and Theorem \ref{V2Geom}.

\end{proof}

\section{The double alternating self-linking integral of a polygonal curve}\label{VasPol}

In the case of a polygonal curve, the double alternating self-linking integral has an expression as a finite sum of geometric probabilities.

\begin{proposition}
The double alternating self-linking integral of a polygonal curve (open or closed) can be expressed as follows:

\begin{equation}
\begin{split}
   SLL(l)&=\frac{1}{2}\sum_{1\leq j_4\leq j_3\leq j_2\leq j_1\leq n}p_{j_1,j_2,j_3,j_4}^*\epsilon(e_{j_1},e_{j_3})\epsilon(e_{j_2},e_{j_4})\\
    &+\frac{1}{2}\sum_{1\leq j_3\leq j_2\leq j_1\leq n}p_{j_1,j_2,j_3}^*\epsilon(e_{j_1},e_{j_2})\epsilon(e_{j_1},e_{j_3})\\
    %&+\sum_{1\leq j_3\leq j_2\leq j_1\leq n}(p_{j_1,j_2,j_3}^{B})^*\epsilon(e_{j_1},e_{j_2})\epsilon(e_{j_2},e_{j_3})+\sum_{1\leq j_3\leq j_2\leq j_1\leq n}(p_{j_1,j_2,j_3}^{C})^*\epsilon(e_{j_1},e_{j_3})\epsilon(e_{j_2},e_{j_3}),
    \end{split}
\end{equation}

\noindent where  $p_{j_1,j_2,j_3,j_4}^*$ denotes the probability that the projections of $e_{j_1}$ and $e_{j_2}$ as well as the projections of $e_{j_3}$ and $e_{j_4}$ both cross in a projection and form an alternating crossing and $(p(j_1,j_2,j_3)^*$ denotes the probability that one of the edges $e_{j_1},e_{j_2},e_{j_3}$ intersects the other two in a projection and form an alternating crossing.

%the projection of $j_1$ intersects the projections of both $j_2$ and $j_3$ and form an alternating pair of crossings, $(p(j_1,j_2,j_3)^{B})^*$ denotes the probability that the projection of $j_2$ intersects the projections of both $j_1$ and $j_3$ and form and alternating pair of crossings and $(p(j_1,j_2,j_3)^{C})$ denotes the probability that the projection of $j_3$ intersects the projections of both $j_1$ and $j_2$ and they form an alternating pair of crossings. 
\end{proposition}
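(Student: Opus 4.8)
The plan is to start from the integral formula for $SLL(l)$ proved in Theorem \ref{V2Geom}, namely
\begin{equation*}
SLL(l)=\frac{1}{8\pi}\int_{\vec{\xi}\in S^2}\sum_{j_1>j_2>j_3>j_4\in I_{\vec{\xi}}^*}\epsilon(j_1,j_3)\epsilon(j_2,j_4)\,dA,
\end{equation*}
and to carry out the same edge-by-edge decomposition already used in the proof of Proposition \ref{cont}, but now keeping the degenerate terms that were set aside there. First I would write the inner sum over 4-tuples of alternating crossings as a sum over ordered quadruples of edge indices $1\leq j_4\leq j_3\leq j_2\leq j_1\leq n$ of the polygonal curve, with an indicator that both $e_{j_1},e_{j_3}$ and $e_{j_2},e_{j_4}$ cross in the projection direction $\vec{\xi}$ and that the pair of crossings is alternating. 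Interchanging the finite sum over edge-quadruples with the integral over $S^2$ (legitimate since everything is a finite sum of bounded integrands), each summand becomes $\tfrac{1}{8\pi}\int_{S^2}\epsilon^*(j_1,j_3)\epsilon^*(j_2,j_4)\,dA$, which by the sign-cancellation argument of Theorem \ref{V2Geom} equals $\tfrac{1}{8\pi}$ times the integral restricted to the spherical region $A$ where both pairs genuinely cross, i.e. $\tfrac12 p^*_{j_1,j_2,j_3,j_4}\epsilon(e_{j_1},e_{j_3})\epsilon(e_{j_2},e_{j_4})$.

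The new content is the bookkeeping of which index-coincidence patterns can actually contribute. As noted in Theorem \ref{V2Geom}, a quadruple can give two crossings in a projection only when the four indices are all distinct (four edges involved) or when exactly one coincidence occurs, which forces the shared edge to be the one involved in both crossings — this is the ``three edges'' case $j_1=j_2$, or $j_2=j_3$, or $j_3=j_4$ (up to how one labels the crossing pairs), yielding a triple $e_{j_1},e_{j_2},e_{j_3}$ where one edge crosses the other two. Any pattern with two or more coincidences leaves at most two edges, which cannot produce two crossings, so those terms vanish. I would therefore split the ordered sum $\sum_{1\leq j_4\leq j_3\leq j_2\leq j_1\leq n}$ into the strict part $j_4<j_3<j_2<j_1$, producing the first displayed sum with weights $p^*_{j_1,j_2,j_3,j_4}$, and the collapsed part with exactly one equality, producing the second sum over triples $1\leq j_3\leq j_2\leq j_1\leq n$ with weights $p^*_{j_1,j_2,j_3}$ and signs $\epsilon(e_{j_1},e_{j_2})\epsilon(e_{j_1},e_{j_3})$ coming from the two crossings that the common edge $e_{j_1}$ makes with $e_{j_2}$ and $e_{j_3}$ respectively. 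Both families of weights are (up to the $1/4\pi$ normalization) intersections of the Banchoff spherical quadrangles associated to the relevant edge pairs, as in Proposition \ref{cont}.

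The main obstacle I anticipate is purely combinatorial rather than analytic: correctly matching the labels of the strict ordering $j_4<j_3<j_2<j_1$ inherited from the Gauss-code description (where $(j_1,j_3)$ and $(j_2,j_4)$ are the two crossing pairs) against the natural edge ordering on the polygon, and verifying that in the single-coincidence case the surviving identification is precisely the one that makes $e_{j_1}$ the doubly-involved edge, so that the sign factor genuinely takes the form $\epsilon(e_{j_1},e_{j_2})\epsilon(e_{j_1},e_{j_3})$ with the triple summed over $1\le j_3\le j_2\le j_1\le n$. One must also check that multiplicity factors from the possible index coincidences are correctly absorbed, so that the overall constant in front of each sum is $\tfrac12$, consistent with the $\tfrac18$ in $SLL$ and the factor $\tfrac12$ already appearing in Theorem \ref{V2Geom}. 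Once these identifications are made carefully, the statement follows by collecting the two groups of terms.
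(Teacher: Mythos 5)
Your proposal is correct and follows essentially the same route as the paper: starting from the edge-by-edge decomposition of the spherical integral already set up in the proof of Proposition \ref{cont} (Eq. \ref{finp}), and then separating the quadruples of edge indices according to whether they involve four distinct edges, three edges (one coincidence, forcing the shared edge to carry both crossings), or fewer (which contribute nothing). The paper's own proof is in fact terser than yours; your extra care about the interchange of sum and integral and about matching the $\tfrac12$ normalization is consistent with what the paper implicitly relies on.
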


\begin{proof}

As explained in the proof of Proposition \ref{cont}, if $j_1,j_2,j_3,j_4$ all lie on the same edge, then there is no contribution to the integral. Similarly, if the 4 points lie only on 2 edges, they do not contribute to the integral any pairs of crossings. However, it is possible that they contribute when they lie in 3 or 4 edges. Similarly, if 3 edges or 2 pairs of edges cross in a projection, then they create two pairs of crossings. The result follows by separating these cases in Eq. \ref{finp}.

%Thus, for a polygonal curve

%\begin{equation}
%    \begin{split}
        %SLL(l)  %&=\frac{1}{2}\frac{1}{4\pi}\int_{j_1}\int_{j_2}\int_{j_3}\int_{j_4}(\dot{\gamma(j_1)}\times\dot{\gamma(j_3)})\cdot\frac{\gamma(j_1)-\gamma(j_3)}{|\gamma(j_1)-\gamma(j_3)|^3}(\dot{\gamma(j_2)}\times\dot{\gamma(j_4)})\cdot\frac{\gamma(j_2)-\gamma(j_4)}{|\gamma(j_2)-\gamma(j_4)|^3}\\
%    &\delta(\Gamma(j_1,j_3)+\Gamma(j_2,j_4))dj_1dj_2dj_3dj_4\\
%    &=\frac{1}{2}\frac{1}{4\pi}\sum_{1\leq J_1\leq n}\sum_{1\leq J_2\leq J_1}\sum_{1\leq J_3\leq J_2}\sum_{1\leq J_4\leq J_3}\int_{j_1\in J_1}\int_{j_2\in J_2}\int_{j_3\in J_3}\int_{j_4\in J_4}\\
%    &(\dot{\gamma(j_1)}\times\dot{\gamma(j_3)})\cdot\frac{\gamma(j_1)-\gamma(j_3)}{|\gamma(j_1)-\gamma(j_3)|^3}(\dot{\gamma(j_2)}\times\dot{\gamma(j_4)})\cdot\frac{\gamma(j_2)-\gamma(j_4)}{|\gamma(j_2)-\gamma(j_4)|^3}\\
%    &\delta(\Gamma(j_1,j_3)+\Gamma(j_2,j_4))dj_1dj_2dj_3dj_4\\
%    SLL(l)=\frac{1}{2}\frac{1}{4\pi}\sum_{1\leq J_1\leq n}\sum_{1\leq J_2\leq J_1}\sum_{1\leq J_3\leq J_2}p_{J_1,J_2,J_3,J_4}\epsilon_{J_1,J_3}\epsilon_{J_2,J_4}
%    \end{split}
%\end{equation}

%\noindent where we noticed that the integrant is the probability that the edges $e_{j_1},e_{j_3}$ and $e_{j_2}e_{j_4}$ both cross in a projection direction (with the constraint that they give an alternating crossing), which we replaced by $p_{J_1,J_2,J_3,J_4}$.

%We work similarly when the points are in 3 edges.
%Since the result does not depend on the projection direction.

\end{proof}

This expression of $SLL$ shows that for polygonal curve its calculation relies in calculating the geometric probabilities $p_{j_1,j_2,j_3,j_4}^*$, $p_{j_1,j_2,j_3}^*$. We note that if all of $j_1,j_2,j_3,j_4$ are consecutive, then $p_{j_1,j_2,j_3,j_4}^*=0$. Similarly, if all of $j_1,j_2,j_3$ are consecutive, then $p_{j_1,j_2,j_3}^*=0$.
The next proposition provides a closed formula for the computation of $p_{j_1,j_2,j_3}^*$ in the case where two of $j_1,j_2,j_3$ are consecutive. 
%, $(p_{j_1,j_2,j_3}^B)^*$ and $(p_{j_1,j_2,j_3}^C)^*$.

%When $j_1,j_2$ or $j_2,j_3$ are consecutive, $(p_{j_1,j_2,j_3}^C)^*=0$
%The next proposition provides a closed formula for the computation of $(p_{j_1,j_2,j_3}^A)^*$ and  $(p_{j_1,j_2,j_3}^B)^*$ $j_1,j_2$ in the case where $j_2,j_3$ are consecutive. 

%A finite formula for the geometric probabilities $p_{j_1,j_2,j_3}$ in the case when two of the involved edges are consecutive was obtained in \cite{Panagiotou2020b}. 
%Its relevance to $w_2$ was not discovered therein.
%For completeness, we re-state the result here.

%\begin{theorem}\label{qijj+1}[Theorem in \cite{Panagiotou2020b}]
%Let $e_i,e_j,e_{j+1}$ denote three edges in 3-space. Then the joint probability of crossing between the projections of $e_i,e_j$ and $e_i,e_{j+1}$, is equal to $\frac{1}{2\pi}A(Q_{i,j,j+1})$, where $Q_{i,j,j+1}$ is given in Table \ref{tableqij1} and  Table \ref{tableqij2}.
%\end{theorem}

\begin{corollary}
Let $e_i,e_j,e_{j+1}$ denote three edges in 3-space. Then the joint probability of crossing between the projections of $e_i,e_j$ and $e_i,e_{j+1}$ so that they give an alternating crossing, namely, $p(i,j,j+1)$ is equal to $\frac{1}{2\pi}A(Q_{i,j,j+1}^*)$, where $Q^*(i,j,j+1)$ is given in Table \ref{tableqij1}, where $c_{4,1}=(\vec{p}_{4,1}\cdot\vec{n}_1)\epsilon_{1,3}$,  $w=(u_2\times (-n_2))\cdot (u_2\times n_4)$, $w_0=(\vec{v}_3\times(-\vec{n}_1))\cdot(\vec{v}_3\times\vec{n}_3)$ and the vectors $\vec{u}_2,\vec{n}_2,\vec{n}_4,\vec{v}_3,\vec{v}_2$ and $\vec{n}_1$ are normal to the planes containing the vertices $(i-1,i,j+1),(i-1,i,j),(i-1,j-1,i),(j-1,j+1,j),(j-1,j+1,i)$, and $(i-1,j-1,j)$, respectively.

%\begin{align}\label{pk21}
%Q_{i,j,j+1}^*=
%\begin{cases}
%0, \text{ if } \epsilon_{1,3}\neq \epsilon_{1,4}\text{ or } w>0 \text{ or } w_0>0\\
%\frac{1}{2\pi} Area(\vec{v}_3,\vec{n}_4,\vec{n}_1,-\vec{u}_3,-\vec{u}_2) \text{ if } c_{4,1}<0,w<0,w_0<0\\
%\frac{1}{2\pi} %Area(\vec{v}_3,\vec{n}_4,\vec{n}_1,-\vec{u}_2) %\text{ if } c_{4,1}>0,w<0,w_0<0 
%\end{cases}
%\end{align}

%\noindent where $c_{4,1}=(\vec{p}_{4,1}\cdot\vec{n}_1)\epsilon_{1,3}$,  $w=(u_2\times (-n_2))\cdot (u_2\times n_4)$, $w_0=(\vec{v}_3\times(-\vec{n}_1))\cdot(\vec{v}_3\times\vec{n}_3)$ and the vectors $\vec{u}_2,\vec{n}_2,\vec{n}_4,\vec{v}_3,\vec{v}_2$ and $\vec{n}_1$ are normal to the planes containing the vertices $i-1ij+1,i-1ij,i-1j-1i,j-1j+1j,j-1j+1i$, and $i-1j-1j$, respectively.

\end{corollary}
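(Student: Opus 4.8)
The plan is to reduce the computation of the joint probability $p(i,j,j+1)$ to the computation of an area of a spherical region cut out by the various ``crossing conditions'' imposed on the projection direction $\vec\xi\in S^2$. Recall from the proof of Proposition \ref{cont} and from \cite{Panagiotou2020b} that the event ``the projections of $e_i$ and $e_j$ cross'' corresponds to $\vec\xi$ lying in a spherical quadrangle $Q_{i,j}$ (a region bounded by four great-circle arcs, the arcs being determined by the planes through triples of the four endpoints of $e_i$ and $e_j$), and similarly ``$e_i$ and $e_{j+1}$ cross'' corresponds to $\vec\xi\in Q_{i,j+1}$. Because $e_j$ and $e_{j+1}$ share a vertex, the two quadrangles $Q_{i,j}$ and $Q_{i,j+1}$ share a common boundary great-circle arc (the one coming from the plane through $i-1,i$ and the shared vertex $j$), so their intersection $Q_{i,j}\cap Q_{i,j+1}$ is again a spherical polygon whose bounding arcs lie on the six great circles determined by the planes through the vertex triples listed in the statement. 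The first step, then, is to write down these six great circles explicitly as the kernels of the linear functionals $\vec\xi\mapsto \vec\xi\cdot\vec n$ for the normals $\vec u_2,\vec n_2,\vec n_4,\vec v_3,\vec v_2,\vec n_1$, and to identify which side of each circle corresponds to the crossing condition.

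The second step is to impose the \emph{alternating} condition. A crossing of $e_i$ with $e_j$ and a crossing of $e_i$ with $e_{j+1}$ in the same projection automatically involve a common edge $e_i$; the 4-tuple is alternating precisely when $e_i$ is the over-arc at one crossing and the under-arc at the other, i.e.\ when the two crossings have opposite ``heights'' along $e_i$. Since the relative height of $e_i$ over $e_j$ versus $e_i$ over $e_{j+1}$ changes sign exactly along one further great circle (coming from the plane through $e_i$ and the shared vertex $j$, realized via the sign of $(\vec p_{4,1}\cdot\vec n_1)\epsilon_{1,3}$, i.e.\ via the quantity $c_{4,1}$, and the sign-tracking data $w$ and $w_0$), the alternating sub-event is one of the two half-regions into which this circle divides $Q_{i,j}\cap Q_{i,j+1}$. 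Combining this with the orientation/sign bookkeeping encoded in $w=(\vec u_2\times(-\vec n_2))\cdot(\vec u_2\times\vec n_4)$ and $w_0=(\vec v_3\times(-\vec n_1))\cdot(\vec v_3\times\vec n_3)$ — these quantities record, at the two shared-arc endpoints, on which side the quadrangles actually sit — one reads off the region $Q_{i,j,j+1}^*$ from Table \ref{tableqij1} by a finite case analysis according to the signs of $c_{4,1}$, $w$ and $w_0$. Finally, since the probability measure on projection directions is the normalized area on $S^2$ and the relevant region together with its antipode has total area $2A(Q_{i,j,j+1}^*)$, we get $p(i,j,j+1)=\frac{1}{4\pi}\cdot 2A(Q_{i,j,j+1}^*)=\frac{1}{2\pi}A(Q_{i,j,j+1}^*)$.

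The third step is the actual evaluation: the area of a spherical polygon bounded by great-circle arcs is given by the spherical excess (sum of interior angles minus $(\text{number of sides}-2)\pi$), and each interior angle is the angle between two of the normal vectors above, computable by an $\arccos$ of a normalized dot product. This is the routine part and I would only indicate the formula rather than grind through all the sign cases.

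The main obstacle I expect is the bookkeeping in the second step: correctly matching each of the finitely many sign patterns of $(c_{4,1},w,w_0)$ to the correct connected component of $Q_{i,j}\cap Q_{i,j+1}$ carved out by the alternation circle, and checking that degenerate configurations (e.g.\ when $e_i,e_j,e_{j+1}$ become coplanar, or when a bounding arc degenerates because a vertex triple is collinear) occur only on a measure-zero subset of $S^2$ and hence do not affect the probability. Getting the orientation conventions consistent with the sign function $\epsilon$ used in Theorem \ref{V2Geom} — so that the ``alternating'' half is the right one and not its complement — is the delicate point; everything else is spherical trigonometry.
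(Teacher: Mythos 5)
Your first step (the joint crossing event is $\vec\xi\in Q_{i,j}\cap Q_{i,j+1}$, handled via Theorem A.1 of \cite{Panagiotou2020b}) and your normalization $p=\frac{1}{4\pi}\cdot 2A=\frac{1}{2\pi}A$ match the paper. The gap is in your second step, where the translation of ``alternating'' into a condition on $\vec\xi$ goes wrong in two ways. First, you omit the interleaving requirement: by the paper's definition an alternating 4-tuple must have its two chords linked, i.e.\ the pattern $(j_1,j_3),(j_2,j_4)$ with $j_1<j_2<j_3<j_4$. For the configuration at hand this forces the crossing of $e_i$ with $e_j$ to occur \emph{before} the crossing of $e_i$ with $e_{j+1}$ as one travels along $e_i$; if the order is reversed the chords are nested and the 4-tuple does not count. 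This ordering is what genuinely cuts $Q_{i,j}\cap Q_{i,j+1}$ by one extra great circle --- the one with pole $\vec{v}_3$, normal to the plane containing $e_j\cup e_{j+1}$, since the two crossing points on $e_i$ coincide exactly when the projecting line meets both $e_j$ and $e_{j+1}$, i.e.\ when $\vec\xi\cdot\vec{v}_3=0$. This is precisely the constraint the paper adds on top of Theorem A.1, and it is why $\vec{v}_3$ appears as the final bounding normal in every row of Table \ref{tableqij1}.

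Second, your claim that the over/under alternation ``changes sign along one further great circle'' inside the region is false for straight edges: the planar projections of two disjoint segments cross at most once, so the set of directions in which they cross splits into a component where $e_i$ is over and its antipodal component where $e_i$ is under, and the over/under is constant on each. Hence alternation of over/under is an all-or-nothing property of each antipodal pair of components of $Q_{i,j}\cap Q_{i,j+1}$; the paper encodes it through the projection-independent crossing signs, via the hypothesis $\epsilon_{i,j}=\epsilon_{i,j+1}$ heading Table \ref{tableqij1} together with the sign data $w$, $w_0$ and the $c$'s, not through an additional great-circle cut. As written, your case analysis would partition the sphere along the wrong circle and would also count nested (non-interleaved) configurations, so the region you produce is not $Q_{i,j,j+1}^*$.
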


\begin{proof}
In order for them to form an alternating crossing, the signs of the two crossings must be the same. The geometric probability that 3 edges cross, 2 of which are consecutive, $e_{i},e_{j},e_{j+1}$ was found in Theorem A.1 in \cite{Panagiotou2020b}. In order to preserve the order of crossings, we need to ensure that as we move along $e_i$ in a projection, first we encounter the crossing with $e_j$ and then with $e_{j+1}$. This imposes the extra constraint that the spherical area is on the side of the great circle defined by the vector $\vec{v}_3$ in the direction of $\vec{v}_3$. The results are shown in Table \ref{tableqij1}.
\end{proof}

\begin{table}
\centering
\begin{tabular}{|l|l|}
\hline
$\epsilon_{i,j}=\epsilon_{i,j+1}, w<0, w_0<0$ & $Q_{i,j,j+1}^*$\\
\hline
$c_{j+1,i+1}>0, c_{j+2,i+1}>0, c_{j+1,i}>0, c_{j+2,i}>0$                   & $(\vec{n}_4,\vec{n}_1,-\vec{u}_2,\vec{v}_3)$\\
$c_{j+1,i+1}<0, c_{j+2,i+1}<0, c_{j+1,i}>0, c_{j+2,i}>0$                   & $(\vec{n}_4,-\vec{u}_3,-\vec{u}_2,\vec{v}_3)$\\
$c_{j+1,i+1}>0, c_{j+2,i+1}<0, c_{j+1,i}>0, c_{j+2,i}>0$                   & $(\vec{n}_4,\vec{n}_1,-\vec{u}_3,-\vec{u}_2,\vec{v}_3)$\\
$c_{j+1,i+1}<0, c_{j+2,i+1}>0, c_{j+1,i}>0, c_{j+2,i}>0$                   & $(\vec{n}_4,-\vec{u}_3,\vec{n}_1,-\vec{u}_2,\vec{v}_3)$\\
\hline
otherwise & 0\\
\hline
\end{tabular}
\caption{The spherical polygon $Q_{i,j,j+1}^*$ in the case where the signs satisfy $\epsilon_{i,j}=\epsilon_{i,j+1}$, depending on the conformation. The spherical polygon $Q_{i,j,j+1}^*$  contains the vectors which define planes where the projections of $e_i,e_j$ and $e_i,e_{j+1}$ both cross and they create an alternating crossing.  $(\vec{w}_1,\vec{w}_2,\dotsc,\vec{w}_n)$ denotes the spherical polygon bounded by the great circles with normal vectors $\vec{w}_i$,  $i=1,\dotsc, n$, in the counterclockwise orientation, (see \cite{Panagiotou2020b}).}
\label{tableqij1}
\end{table}

%\begin{proposition}
%Let $l$ denote a curve in 3-space. As the endpoints of $l$ tend to coincide, $w_2(l)$ tends to $v_2(l)$.
%\end{proposition}

%\begin{proof}
%In \cite{Panagiotou2020b} it was proved that as the endpoints of an open curve in 3-space tend to coincide, the coefficients of the Jones polynomial of an open curve in 3-space tend to the coefficients of the resulting knot. Since the second Vassiliev measure is a finite sum of those coefficients, the result follows.
%\end{proof}

%Note that for a polygonal curve pairs of crossings can  $w_2$ for a polygonal curve can be expressed as

%\begin{equation}
%\begin{split}
%    w_2(l)&=\sum_{j_1<j_2<j_3<j_4\in I_c^{\prime}}p_{j_1,j_2,j_3,j_4}\epsilon(e_{j_1},e_{j_3})\epsilon(e_{j_2},e_{j_4})+\sum_{j_1<j_2<j_3<j_4\in I_{nc}^{\prime}}p_{j_1,j_2,j_3,j_4}\epsilon(e_{j_1},e_{j_3})\epsilon(e_{j_2},e_{j_4})
%    \end{split}
%\end{equation}

\subsection{The double alternating self-linking integral of a polygonal curve with 4 edges}

A polygonal curve with 4 edges is the shortest polygonal curve that can have a non-trivial double alternating self-linking integral. We will show that in this simple case the double alternating self-linking integral has an even simpler interpretation as the geometric probability that a projection of $l$ gives the knotoid $k2.1$.

\begin{figure}[H]
   \begin{center}
     \includegraphics[width=0.8\textwidth]{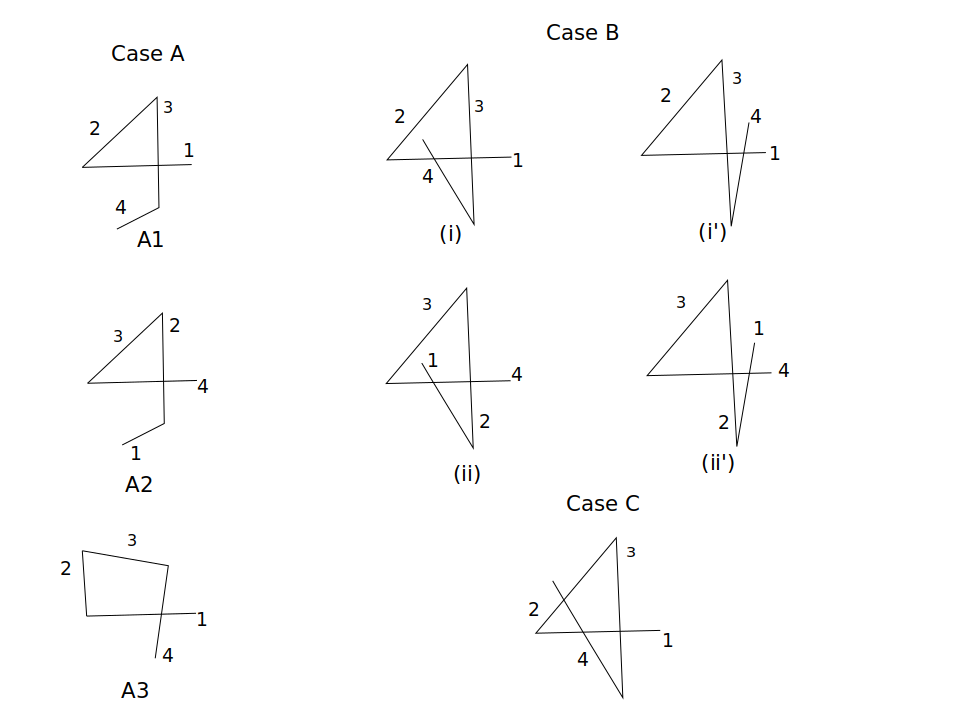}
     \caption{The possible projections with crossings of a polygonal curve with 4 edges.}
     \label{fig:cases}
   \end{center}
\end{figure}

\begin{proposition}
Let $l_4$ denote an open polygonal curve in 3-space. The double alternating self-linking integral of $l_4$ is equal to the signed geometric probability that it gives the knotoid k2.1 in a projection direction, ie. $SLL(l_4)=\frac{1}{2}P(l_{\vec{\xi}}=k2.1)$.
\end{proposition}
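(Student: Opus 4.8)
The plan is to start from the integral formula of Theorem~\ref{V2Geom} and reduce the statement to a finite classification of the knotoid diagrams that generic projections of a four-edge curve produce. By Theorem~\ref{V2Geom},
\[
SLL(l_4)=\frac{1}{8\pi}\int_{\vec{\xi}\in S^2}\Bigl(\sum_{j_1>j_2>j_3>j_4\in I_{\vec{\xi}}^*}\epsilon(j_1,j_3)\epsilon(j_2,j_4)\Bigr)\,dA ,
\]
so it is enough to determine, for each generic $\vec{\xi}$, the value $f(\vec{\xi})$ of the inner sum as a function of the knotoid type of $l_{\vec{\xi}}$, and then integrate over $S^2$.

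The first step is combinatorial. Writing $l_4=e_1\cup e_2\cup e_3\cup e_4$, a crossing in a projection can occur only between a non-adjacent pair of edges, and there are exactly three such pairs, $(e_1,e_3),(e_1,e_4),(e_2,e_4)$; hence $l_{\vec\xi}$ has at most three crossings, and, recording the order of the crossing points along $l_4$ together with the over/under data, one obtains the finitely many Gauss codes listed in Figure~\ref{fig:cases}. The claim to establish from this list is the dichotomy: \emph{every} projection of $l_4$ is either the trivial knotoid or $k2.1$, and in the latter case the diagram has exactly two crossings. For two-crossing diagrams this follows from the codes together with elementary reduction: a two-crossing diagram from $l_4$ that is not the trivial knotoid must have crossing number two, hence represents $k2.1$ (or its mirror). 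For three-crossing diagrams one argues that a reducible one necessarily descends to the trivial knotoid (the reduction using at most one Reidemeister~III move), whereas an irreducible three-crossing knotoid is $k3.1$ or $k3.2$; the crux is that neither can be produced by four straight segments. This is shown by encoding the over/under relations at the three crossings as linear inequalities on the heights of the five vertices and checking that the system forcing a trefoil-type (alternating) pattern is inconsistent, since a suitable positive combination of the inequalities is identically zero; thus the pattern required for $k3.1$ or $k3.2$ is not realizable, and every three-crossing projection of $l_4$ is the trivial knotoid.

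Given the dichotomy, I would evaluate $f(\vec\xi)$ directly. When $l_{\vec\xi}$ has at most one crossing, $I_{\vec\xi}^*=\varnothing$ and $f(\vec\xi)=0$. When $l_{\vec\xi}$ is the trivial knotoid, the realizable Gauss codes and over/under assignments are checked to make the signed sum over alternating $4$-tuples of crossings vanish. When $l_{\vec\xi}$ is the $k2.1$ diagram, there is a single pair of crossings; one checks against the definition of an alternating $4$-tuple that it contributes the product $\epsilon_1\epsilon_2$ of the two crossing signs, and that this product equals one and the same value $\sigma\in\{\pm1\}$ for every projection direction giving $k2.1$, namely the value attached, by convention, to $k2.1$ in the definition of the signed geometric probability. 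Writing $\sigma(\vec\xi)$ for this sign on the set of directions giving $k2.1$ and $\sigma(\vec\xi)=0$ otherwise, we get $f=\sigma$ and
\[
SLL(l_4)=\frac{1}{8\pi}\int_{\vec{\xi}\in S^2}\sigma(\vec{\xi})\,dA=\frac12\cdot\frac{1}{4\pi}\int_{\vec{\xi}\in S^2}\sigma(\vec{\xi})\,dA=\frac12\,P(l_{\vec{\xi}}=k2.1),
\]
which is the claimed identity.

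The main obstacle is the two case analyses above: showing that four straight segments realize no knotoid beyond the trivial one and $k2.1$, where the height-inequality argument excluding $k3.1$ and $k3.2$ is the delicate ingredient, and pinning down the sign $\sigma$, i.e.\ checking that the local crossing data actually contributing to $SLL(l_4)$ corresponds exactly to the directions giving $k2.1$ and carries the weight used in the definition of $P(l_{\vec{\xi}}=k2.1)$. For this bookkeeping the natural tools are the finite-sum expression for $SLL$ of a polygonal curve from the previous Proposition and the explicit spherical polygons of Table~\ref{tableqij1}, which turn each contribution into an area on $S^2$ and make the factor $\tfrac12$ transparent.
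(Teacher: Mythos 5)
Your proof takes essentially the same route as the paper: a finite case analysis of the crossing configurations a generic projection of a four-edge arc can exhibit, identification of the configurations carrying a pair of alternating crossings with the knotoid $k2.1$, verification that the sign product there is constant, and integration over $S^2$. The paper's version is leaner --- it enumerates crossing configurations directly (Figure~\ref{fig:cases}) rather than classifying projections by knotoid type, so it never needs your sketched height-inequality exclusion of $k3.1$/$k3.2$, and it pins the sign product down to $+1$ (citing that only one of the two mirror cases can occur for a fixed curve) rather than folding an undetermined $\sigma$ into the definition of the signed probability.
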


\begin{proof}
By Figure \ref{fig:cases} we see that the only possibility of a pair of alternating crossings in a projection of a curve with 4 edges is case B(i) or case B(ii), both of which correspond to the knotoid k2.1. In \cite{Panagiotou2020b} it was proved that for a given curve $l$ either B(i) or B(ii) is a possible outcome in the projections of $l$, but not both. The product of the two crossings is equal to 1 in both cases.

%diagram that can contribute to $\rho_{II}$ comes from $k2.1$. 
\end{proof}

%\begin{remark}
%In \cite{Panagiotou2020b} we proved that the coefficients of the Jones polynomial of a polygonal curve of 4 edges are expressions involving
%$P(K(l_{\vec{\xi}})=k2.1)$, the geometric probability that a projection gives the knotoid k2.1
%\end{remark}

%The Jones polynomial of polygonal curves of 4 edges in 3-space has the following expression

%\begin{equation}
%    \begin{split}
%        J_K(x)&=\sum_{k=0}^{\infty}\frac{x^k}{k!}\sum_{s\in S}P(s)(-1)^{i(s_{\vec{\xi}})}(j(s_{\vec{\xi}})+n_{-,\vec{\xi}}-2n_{+,\vec{\xi}})^k\\
%    \end{split}
%\end{equation}

%\noindent where $S$ is the set of enhanced states that can appear from the projection of a polygonal curve with 4 edges. 

%Therefore,

%\begin{equation}
%    \begin{split}
%        w_2(K,x)&=\frac{1}{2}\sum_{s\in S}P(s)(-1)^{i(s_{\vec{\xi}})}(j(s_{\vec{\xi}})+n_{-,\vec{\xi}}-2n_{+,\vec{\xi}})^2\\
%    \end{split}
%\end{equation}

\section{Conclusions}
In this manuscript we defined Vassiliev measures for open curves in 3-space and showed that they generalize the conventional Vassiliev invariants. For open curves, these are continuous functions of the curve coordinates which tend to the Vassiliev invariants of the closed curves as the endpoints tend to coincide. A geometric interpretation of Vassiliev measures of closed and open curves was given which allowed to derive a well defined integral expression of Vassiliev measures. More precisely, the double alternating self-linking integral was introduced and it was shown that it coincides with the second Vassiliev invariant of closed curves. For open curves, this integral is a continuous function of the curve coordinates and, when the open curves have tight knots, the double alternating self-linking integral coincides with the Vassiliev measure of the open curves. For polygonal curves, the double alternating self-linking integral has a simpler expression as a sum of finitely many geometric probabilities. The double alternating self-linking integral allows to rigorously define and capture entanglement and knotting in open curves in 3-space avoiding the calculation of polynomials and avoiding artificial closures. The method introduced in this work provides a framework in which other Vassiliev invariants can also be generalized to open curves. These measures provide a novel and efficient method to measure entanglement and knotting in physical systems of filaments and can lead to many impactful applications. 

\section{Acknowledgements}
Eleni Panagiotou was supported by NSF (Grant No. DMS-1913180). Louis Kauffman was supported by the 
Laboratory of Topology and Dynamics, 
Novosibirsk State University 
(contract no. 14.Y26.31.0025 
with the Ministry of Education and Science 
of the Russian Federation.)

\bibliographystyle{plain}
%\bibliography{Bibliography}{}

\bibliography{VassilievOpen}

\begin{thebibliography}{10}

\bibitem{Arsuaga2007}
J.~Arsuaga, T.~Blackstone, Y.~Diao, K.~Hinson, E.~Karadayi, and M.~Saito.
\newblock Sampling large random knots in a confined space.
\newblock {\em J. Phys. A: Math. Theor.}, 40:11697--11711, 2007.

\bibitem{Arsuaga2007a}
J.~Arsuaga, T.~Blackstone, Y.~Diao, E.~Karadayi, and M.~Saito.
\newblock The linking of uniform random polygons in confined spaces.
\newblock {\em J. Phys. A: Math. Theor.}, 40:1925--36, 2007.

\bibitem{Arsuaga2005}
J.~Arsuaga, M.~Vazquez, P.~McGuirk, S.~Trigueros, D.~W. Sumners, and J.~Roca.
\newblock {DNA} knots reveal a chiral organization of {DNA} in phage capsids.
\newblock {\em Proc. Natl. Acad. Sci. (USA)}, 102:9165--9169, 2005.

\bibitem{Baiesi2017}
M.~Baiesi, E.~Orlandini, F.~Seno, and A.~Trovato.
\newblock Exploring the correlation between the folding rates of proteins and
  the entanglement of their native state.
\newblock {\em J. Phys. A: Math. Theor.}, 50:504001, 2017.

\bibitem{Baiesi2019}
M.~Baiesi, E.~Orlandini, F.~Seno, and A.~Trovato.
\newblock Sequence and structural patterns detected in entangled proteins
  reveal the importance of co-translational folding.
\newblock {\em Scientific Reports}, 9:1--12, 2019.

\bibitem{Baiesi2016}
M.~Baiesi, E.~Orlandini, A.~Trovato, and F.~Seno.
\newblock Linking in domain-swapped protein dimers.
\newblock {\em Scientific Reports}, 6:1--11, 2016.

\bibitem{Banchoff1976}
T.~Banchoff.
\newblock Self-linking numbers of space polygons.
\newblock {\em Indiana Univ. Math. J.}, 25:1171--1188, 1976.

\bibitem{BarNatan1995}
D.~Bar-Natan.
\newblock On the {Vassiliev} knot invariants.
\newblock {\em Topology}, 34:423--472, 1995.

\bibitem{Chmutov2012}
S.~Chmutov, S.~Duzhin, and J.~Mostovoy.
\newblock {\em Introduction to {Vassiliev} Knot Invariants}.
\newblock Cambridge University Press, 2012.

\bibitem{Diao2005}
Y.~Diao, A.~Dobay, and A.~Stasiak.
\newblock The average inter-crossing number of equilateral random walks and
  polygons.
\newblock {\em J. Phys. A: Math. Gen.}, 38:7601--7616, 2005.

\bibitem{Diao1993}
Y.~Diao, N.~Pippenger, and D.~W. Sumners.
\newblock On random knots.
\newblock {\em J. Knot Theor. Ramif.}, 3:419--29, 1993.

\bibitem{Edwards1967}
F.~Edwards.
\newblock Statistical mechanics with topological constraints: I.
\newblock {\em Proc Phys Soc}, 91:513--9, 1967.

\bibitem{Edwards1968}
F.~Edwards.
\newblock Statistical mechanics with topological constraints: {II}.
\newblock {\em J Phys A: Gen Phys}, 1:15--28, 1968.

\bibitem{Everaers1996}
R.~Everaers and K.~Kremer.
\newblock Topological interactions in model polymer networks.
\newblock {\em Phys. Rev. E.}, 53:R37--R40, 1996.

\bibitem{Flapan2019}
E.~Flapan, A.~He, and H.~Wong.
\newblock Topological descriptions of protein folding.
\newblock {\em PNAS}, 116:9360--9369, 2019.

\bibitem{Freyd1985}
P.~Freyd, D.~Yetter, J.~Hoste, W.~Lickorish, K.~C. Millett, and A.~Ocneanu.
\newblock A new polynomial invariant for knots and links.
\newblock {\em Bull. Am. Math. Soc.}, 12:239--46, 1985.

\bibitem{Gauss1877}
K.~F. Gauss.
\newblock {\em Werke}.
\newblock Kgl. Gesellsch. Wiss. G\"ottingen, 1877.

\bibitem{Goundaroulis2017b}
D.~Goundaroulis, J.~Dorier, and A.~Stasiak.
\newblock Studies of global and local entanglements of individual protein
  chains using the concept of knotoids.
\newblock {\em Sci. Reports}, 7:6309, 2017.

\bibitem{Goundaroulis2017}
D.~Goundaroulis, N.~G\"ug\"umcu, S.~Lambropoulou, J.~Dorier, A.~Stasiak, and
  L.~H. Kauffman.
\newblock Topological methods for open-knotted protein chans using the concepts
  of knotoids and bonded knotoids.
\newblock {\em Polymers}, 9:444, 2017.

\bibitem{Goussarov2000}
M.~Goussarov, M.~Polyak, and O.~Viro.
\newblock Finite-type invariants of classical and virtual knots.
\newblock {\em Topology}, 39:1045--1068, 2000.

\bibitem{Grosberg2000}
A.~Y. Grosberg.
\newblock Critical exponents for random knots.
\newblock {\em Phys. Rev. Lett.}, 85:3858, 2000.

\bibitem{Grosberg1996}
A.~Y. Grosberg, A.~Feigel, and Y.~Rabin.
\newblock Flory-type theory of a knotted ring polymer.
\newblock {\em Phys. Rev. E}, 54:6618, 1996.

\bibitem{Grosberg1994}
A.~Y. Grosberg and A.~R. Khokhlov.
\newblock {\em Statistical Physics of Macromolecules}.
\newblock AIP: Woodbury, NY, 1994.

\bibitem{Grosberg1998}
A.~Y. Grosberg, S.~K. Nechaev, and E.~I. Shakhnovich.
\newblock The role of topological constraints in the kinetics of collapse of
  macromolecules.
\newblock {\em J. Phys. France}, 49:2095--2100, 1998.

\bibitem{Gugumcu2017}
N.~G\"ug\"umcu and L.~H. Kauffman.
\newblock New invariants of knotoids.
\newblock {\em Eur. J. Comb.}, 65:186--229, 2017.

\bibitem{Gugumcu2017b}
N.~G\"ug\"umcu and S.~Lambropoulou.
\newblock Knotoids, braidoids and applications.
\newblock {\em Symmetry}, 9:315, 2017.

\bibitem{Halverson2014}
J.~D. Halverson, J.~Dmrek, K.~Kremer, and A.~Y. Grosberg.
\newblock From a melt of rings to chromosome territories: the role of
  topological constraints in genome folding.
\newblock {\em Reports on Progress in Physics}, 77:022601, 2014.

\bibitem{Hirshfeld1995}
A.~C. Hirshfeld and U.~Sassenberg.
\newblock Derivation of the total twist from chern-simons theory.
\newblock {\em J. Knot Theory Ramif.}, 5:489--515, 1996.

\bibitem{Jones1985}
V.F.R. Jones.
\newblock A polynomial invariant of knots via {von Neumann} algebras.
\newblock {\em Bull. Am. Math. Soc.}, 12:103--112, 1985.

\bibitem{Jones1987}
V.F.R. Jones.
\newblock Hecke algebra representations of braid groups and link polynomials.
\newblock {\em Annals of Mathematics}, 126:335--388, 1987.

\bibitem{Kauffman1987}
L.~H. Kauffman.
\newblock State models and the {Jones} polynomial.
\newblock {\em Topology}, 26:395--407, 1987.

\bibitem{Kauffman1990}
L.~H. Kauffman.
\newblock An invariant of regular isotopy.
\newblock {\em Trans. Amer. Math. Soc.}, 318:417--471, 1990.

\bibitem{Kauffman2001}
L.~H. Kauffman.
\newblock {\em Knots and Physics}, volume~1 of {\em Series on knots and
  everything}.
\newblock World Scientific, 1991.

\bibitem{Klenin2000}
K.~Klenin and J.~Langowski.
\newblock Computation of writhe in modelling of supercoiled {DNA}.
\newblock {\em Biopolymers}, 54:307--17, 2000.

\bibitem{Manouras2020}
M.~Manouras, S.~Lambropoulou, and L.~H. Kauffman.
\newblock Finite type invariants for knotoids.
\newblock 2020.

\bibitem{Marenduzzo2009}
D.~Marenduzo, E.~Orlandini, A.~Stasiak, D.~W. Sumners, L.~Tubiana, and
  C.~Micheletti.
\newblock {DNA}-{DNA} interactions in bacteriophage capsids are responsible for
  the observed {DNA} knotting.
\newblock {\em PNAS}, 106:22269--22274, 2009.

\bibitem{Micheletti2006}
C.~Micheletti, D.~Marenduzzo, E.~Orlandini, and D.~W. Sumners.
\newblock Knotting of random ring polymers in confined spaces.
\newblock {\em J. Chem. Phys.}, 124:64903.1--10, 2006.

\bibitem{Millett2004}
K.~C. Millett, A.~Dobay, and A.~Stasiak.
\newblock Linear random knots and their scaling behavior.
\newblock {\em Macromolecules}, 38:601--606, 2005.

\bibitem{Millett2005}
K.~C. Millett and B.~Sheldon.
\newblock Tying down open knots.
\newblock {\em Series of Knots and Evereything}, 36:203--217, 2005.

\bibitem{Panagiotou2015}
E.~Panagiotou.
\newblock The linking number in systems with periodic boundary conditions.
\newblock {\em J. Comput. Phys.}, 300:533--573, 2015.

\bibitem{Panagiotou2020b}
E.~Panagiotou and L.~Kauffman.
\newblock Knot polynomials of open and closed curves.
\newblock {\em Proc. R. Soc. A}, 476:20200124, 2020.

\bibitem{Panagiotou2014}
E.~Panagiotou and M.~Kr\"oger.
\newblock Pulling-force-induced elongation and alignment effects on
  entanglement and knotting characteristics of linear polymers in a melt.
\newblock {\em Phys. Rev. E}, 90:042602, 2014.

\bibitem{Panagiotou2013b}
E.~Panagiotou, M.~Kr\"oger, and K.~C. Millett.
\newblock Writhe and mutual entanglement combine to give the entanglement
  length.
\newblock {\em Phys. Rev. E}, 88:062604, 2013.

\bibitem{Panagiotou2019}
E.~Panagiotou, K.~C. Millett, and P.~J. Atzberger.
\newblock Topological methods for polymeric materials: characterizing the
  relationship between polymer entanglement and viscoelasticity.
\newblock {\em Polymers}, 11:11030437, 2019.

\bibitem{Panagiotou2010}
E.~Panagiotou, K.~C. Millett, and S.~Lambropoulou.
\newblock The linking number and the writhe of uniform random walks and
  polygons in confined space.
\newblock {\em J. Phys. A}, 43:045208--30, 2010.

\bibitem{Panagiotou2013}
E.~Panagiotou, K.~C. Millett, and S.~Lambropoulou.
\newblock Quantifying entanglement for collections of chains in models with
  periodic boundary conditions.
\newblock {\em Procedia IUTAM: Topological Fluid Dynamics}, 7:251--260, 2013.

\bibitem{Panagiotou2020}
E.~Panagiotou and K.~W. Plaxco.
\newblock A topological study of protein folding kinetics.
\newblock {\em Topology and geometry of Biopolymers, AMS Contemporary
  Mathematics Series}, 746, 2020.

\bibitem{Panagiotou2011}
E.~Panagiotou, C.~Tzoumanekas, S.~Lambropoulou, K.~C. Millett, and D.~N.
  Theodorou.
\newblock A study of the entanglement in systems with periodic boundary
  conditions.
\newblock {\em Progr. Theor. Phys. Suppl.}, 191:172--181, 2011.

\bibitem{Polyak2001}
M.~Polyak and O.. Viro.
\newblock On the casson knot invariant.
\newblock {\em J. Knot Theory Ramif.}, 10:711--738, 2001.

\bibitem{Rawdon2008}
E.~J. Rawdon, J.~C. Kern, M.~Piatek, P.~Plunkett, A.~Stasiak, and K.~C.
  Millett.
\newblock Effect of knotting on the shape of polymers.
\newblock {\em Macromolecules}, 41:8281--8287, 2008.

\bibitem{Rubinstein2003}
M.~Rubinstein and R.~Colby.
\newblock {\em Polymer Physics}.
\newblock Oxford University Press, 2003.

\bibitem{Stolz2017}
R.~Stolz, M.~Yoshida, R.~Brasher, M.~Flanner, K.~Ishihara, D.~J. Sheratt,
  K.~Shimokawa, and M.~Vazquez.
\newblock Pathways of {DNA} unlinking: a story of stepwise simplification.
\newblock {\em Sci. Reports}, 7:12420, 2017.

\bibitem{Sulkowska2012}
J.~I. Sulkowska, E.~J. Rawdon, K.~C. Millett, J.~N. Onuchic, and A.~Stasiak.
\newblock Conservation of complex knotting and slpiknotting in patterns in
  proterins.
\newblock {\em PNAS}, 109:E1715, 2012.

\bibitem{Sumners1988}
D.~W. Sumners and S.~G. Whittington.
\newblock Knots in self-avoiding walks.
\newblock {\em J. Phys. A: Math. Gen.}, 21:1689--94, 1988.

\bibitem{Sumners1990}
D.~W. Sumners and S.~G. Whittington.
\newblock Untangling {DNA}.
\newblock {\em Math Intelligencer}, 12:71--80, 1990.

\bibitem{Thurston1995}
D.~Thurston.
\newblock Integral expressions for the {Vassiliev} knot invariants.
\newblock {\em Harvard University senior thesis arXiv:math/9901110}, pages
  1--52, 1995.

\bibitem{Turaev2012}
V.~Turaev.
\newblock Knotoids.
\newblock {\em Osaka, J. Math.}, 49:195--223, 2012.

\bibitem{Vassiliev1990}
V.~A. Vassiliev.
\newblock Cohomology of knot spaces.
\newblock {\em Theor of singularities and its applications, Advances in Soviet
  Math.}, 1:23--69, 1990.

\bibitem{Vassiliev2005}
V.~A. Vassiliev.
\newblock Combinatorial computation of combinatorial formulas for knot
  invariants.
\newblock {\em Trans. Moscow Math. Soc.}, 66:1--83, 2005.

\bibitem{Witten1989}
E.~Witten.
\newblock Quantum field theory and the {Jones} polynomial.
\newblock {\em Commun. Math. Phys.}, 121:351--399, 1989.

\end{thebibliography}

\end{document}